\newcommand{\F}{\mathbb{F}}
\newcommand{\FF}{\mathbb{F}}
\newcommand{\ind}{\operatorname{ind}}
\newcommand{\orb}{\operatorname{orb}}
\def\skipa{\vspace{-1.5mm} & \vspace{-1.5mm} & \vspace{-1.5mm}\\}
\newlength{\standardunitlength}
\newtheorem{prop}{Proposition}[section]
\newtheorem{lemma}[prop]{Lemma}
\newtheorem{cor}[prop]{Corollary}
\newtheorem{theorem}[prop]{Theorem}
\newtheorem{thm}[prop]{Theorem}
\newtheorem{example}[prop]{Example}
\begin{document}

\title [Derangements in finite classical groups] {Derangements in finite classical groups for actions related to extension field and imprimitive subgroups
and the solution of the Boston--Shalev conjecture}

\author{Jason Fulman}
\address{Department of Mathematics\\
        University of Southern California\\
        Los Angeles, CA, 90089-2532, USA}
\email{fulman@usc.edu}

\author{Robert Guralnick}
\address{Department of Mathematics\\University of Southern California \\ Los Angeles, CA, 90089-2532, USA}
\email{guralnic@usc.edu}

\date{Version of July 31, 2015}

\thanks{Keywords: derangement, finite classical group, random matrix, permutation group}

\thanks{2010 AMS Subject Classification: 20G40, 20B15}

\thanks{Fulman was partially supported by NSA grant H98230-13-1-0219.
Guralnick was partially supported by NSF grant DMS-1302886.}

\begin{abstract}  This is the fourth paper in a series. We prove a conjecture made independently by Boston et al and Shalev.
The conjecture asserts that there is an absolute positive constant $\delta$
such that if $G$ is a finite simple group acting transitively on a set of size $n > 1$, then the proportion of derangements in $G$ is greater
than $\delta$.  We show that with possibly finitely many exceptions, one can take $\delta = .016$.   Indeed, we prove much stronger results
showing that for many actions, the proportion of derangements goes to $1$ as $n$ increases and prove similar results for families of permutation
representations.
\end{abstract}

\maketitle

\section{Introduction}

A permutation on a set $X$ is called a derangement if it has no fixed points.  A classical and elementary theorem of Jordan asserts that a finite
group acting transitively on a set $X$ of size at least $2$ contains derangements.  There are many results on the proportion of derangements
in finite transitive groups.  Rather amazingly, it was only recently that it was shown \cite{CC} that if $G$ acts transitively on a set of size $n > 1$,
then the proportion of derangements is at least $1/n$  (this is a quite easy theorem -- see \cite{DFG} for a short proof and also an upper bound
in terms of the rank of the permutation group).

Derangements come up naturally in many contexts (see the surveys \cite{DFG} and \cite{Se} for applications to topology, number theory, and maps
between varieties over finite fields). Perhaps the earliest results on derangements are due to Montmort \cite{Mo}. He studied derangements
in the full symmetric group $S_n$ to analyze a card game (it is easy to see that the proportion of derangements in $S_n$ tends to $1/e$
as $n \rightarrow \infty$ and is always at least $1/3$).

If $G$ is a finite simple group acting faithfully and transitively on a set $X$, then it was noticed that the proportion of derangements never seemed
to be too small.  This led Boston et al \cite{BDF} and Shalev to (independently) conjecture that there is a constant $\delta > 0$ so that for
a finite simple group $G$, the proportion of derangements is at least $\delta$. (Boston et al \cite{BDF} suggest that $\delta = 2/7$; in fact that
is not true --  Tim Burness has observed that  the group ${^2}F_4(2)'$ has a transitive permutation representation with the proportion of derangements
equal to $89/325$. We also mention that by \cite{NP} that for $d$ sufficiently large, the proportion of derangements for $SL(d,2)$ acting on $1$-dimensional spaces is
less than $.29$).  In this paper, we complete the proof of the Boston-Shalev conjecture:

\begin{thm} \label{thm:bs}  Let $G$ be a finite simple group acting faithfully and transitively on a set $X$ of cardinality $n$.
With possibly finitely many exceptions, the proportion of derangements in $G$ is at least $.016$.
\end{thm}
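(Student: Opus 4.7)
The proof proceeds by the Classification of Finite Simple Groups. Sporadic groups contribute only finitely many pairs $(G,M)$ and may be absorbed into the finite list of exceptions; for alternating groups $G = A_n$ the statement follows from classical results on derangements in $S_n$ combined with an analysis of the primitive actions of small degree; for the exceptional groups of Lie type, the bounded Lie rank means only finitely many families of maximal subgroups arise, and each is handled by an argument analogous to the classical case sketched below. The bulk of the work is therefore to establish the bound for the classical groups $GL_n$, $GU_n$, $Sp_n$, and $O^\pm_n$ over $\F_q$.

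For a transitive action of a classical group $G$ I may assume the point stabilizer $M$ is maximal. By Aschbacher's theorem, $M$ lies either in one of the geometric classes $\C_1, \dots, \C_8$ or in the class $\mathcal{S}$ of almost simple irreducible subgroups. For each family the plan is to exhibit a union $\mathcal{T}$ of conjugacy classes of $G$ such that (i) no element of $\mathcal{T}$ is $G$-conjugate into any member of the family, and (ii) $|\mathcal{T}|/|G|$ is bounded below by an absolute constant, in fact tending to $1$ in many cases. The natural choice of $\mathcal{T}$ is a collection of regular semisimple elements whose characteristic polynomial has a factorization incompatible with the algebraic structure preserved by $M$: an irreducible factor of sufficiently large degree rules out imprimitive decompositions, while particular factorization patterns (for example, the presence of a factor whose degree is coprime to $r$) rule out an $\F_{q^r}$-linear structure on the natural module. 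The density of such $\mathcal{T}$ is estimated using the cycle index generating functions for the classical groups and the machinery of regular semisimple classes.

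The $\C_1$ case, the classes $\C_4$--$\C_8$, and the class $\mathcal{S}$ have been treated in the earlier papers of the series; the contribution of the present paper, and what I regard as the main obstacle, is to dispose of the imprimitive class $\C_2$ and the extension field class $\C_3$. These are the most delicate because the characteristic-polynomial constraints are the mildest: a generic element lying in a $\C_2$ subgroup need only have characteristic polynomial compatible with a block permutation on some decomposition $V = V_1 \oplus \cdots \oplus V_k$, and one lying in a $\C_3$ subgroup need only have characteristic polynomial which, after a field extension, acquires the appropriate Galois symmetry. The key estimates come from explicit manipulation of the cycle index of $GL_n(q)$ (and of its unitary, symplectic, and orthogonal analogues), together with sieve arguments showing that the proportion of elements of "imprimitive type" or of "extension field type" is bounded away from $1$ uniformly in $n$ and $q$. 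Combining the resulting bounds from all Aschbacher classes, taking the infimum over the relevant ranges of $n$ and $q$, and absorbing finitely many small cases into the exceptional list, produces the explicit constant $\delta = .016$ claimed in the theorem.
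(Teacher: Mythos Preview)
Your overall architecture matches the paper's: reduce via CFSG, cite prior work for sporadic, alternating, and bounded-rank (exceptional) cases, invoke Aschbacher for classical groups, quote the earlier papers in the series for $\C_1$, $\C_4$--$\C_8$, and $\mathcal{S}$, and finish off $\C_2$ and $\C_3$ here using regular semisimple elements. A few points, however, are either misattributed or do not reflect how the argument actually runs.

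First, the alternating case is not ``classical results on derangements in $S_n$''; it is the theorem of \L uczak and Pyber, which is a substantial result in its own right and is exactly what the paper invokes. Second, the constant $.016$ is not obtained by ``taking the infimum'' over your estimates: it is the lower bound from \cite{FNP} on the proportion of (strongly) regular semisimple elements in a finite classical group. The logic is that the proportion of regular semisimple elements which are \emph{not} derangements for the $\C_2$ and $\C_3$ actions is shown to tend to $0$ as $n\to\infty$, so for $n$ large the proportion of regular semisimple derangements exceeds $.016$; this gives Theorem~\ref{thm:regss}, and Theorem~\ref{thm:bs} follows by combining with \cite{LP}.

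Third, your sketch of the $\C_2$ argument (``an irreducible factor of sufficiently large degree rules out imprimitive decompositions'') is not the mechanism the paper uses, and it is not clear it would work uniformly over all $m,k$ with $mk=n$. The paper instead splits cosets $xH_0$ of $H_0 = GL(m,q)^k$ in $H = GL(m,q)\wr S_k$ according to whether $\ind(x)\ge k/2$ or $\ind(x)<k/2$. In the first regime one bounds the number of $H_0$-orbits on $xH_0$ by $k(GL(m,q))^{\orb(x)}\le q^{n/2}$ and applies the centralizer lower bound; in the second regime Lemma~\ref{lem:index} forces $x$ to fix a subset of size roughly $k/2$, so every element of $xH_0$ fixes a subspace of dimension at least $n/4$, and one reduces to the $\C_1$ estimates already proved in \cite{FG1}. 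For $\C_3$ your description of the inner coset (factor degrees divisible by $b$) is correct, but the outer cosets $GL(n/b,q^b).b\setminus GL(n/b,q^b)$ are handled separately by Shintani descent to bound the number of classes there by $(b-1)q^{n/b}$.
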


The result fails for general transitive groups (indeed, there are easy examples where the proportion of derangements is exactly $1/n$).  It also fails for almost simple
groups  (it follows from our result that one can show the proportion of derangements for an almost simple group is at least
$\delta/\log n$).  Theorem \ref{thm:bs} was proved for alternating and  symmetric groups
by Luczak and Pyber \cite{LP}.  Indeed, they proved some stronger results.   Since the result is asymptotic, we can ignore sporadic groups
and so we consider finite simple groups of Lie type (as in \cite{LP}, we prove stronger results).

This is the fourth paper in a series beginning with \cite{FG4}, \cite{FG2}, \cite{FG1} and completes the proof.  Indeed, we prove the following (recall an element of a finite group of Lie type is regular semisimple if its centralizer in the corresponding algebraic group has connected component a (maximal) torus).

\begin{thm} \label{thm:regss}   There exists a $\delta > 0$ so that if  $G$ is a sufficiently
large finite simple group of Lie type acting faithfully and transitively on a set $X$,
then the proportion of elements which are both regular semisimple and derangements is at least $\delta$.
\end{thm}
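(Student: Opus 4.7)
The plan is to exploit that this is the fourth paper in a series, relying on the previous three (\cite{FG4}, \cite{FG2}, \cite{FG1}) to reduce the problem to the two Aschbacher classes named in the title. After absorbing sporadic groups into the ``finitely many exceptions'' and appealing to prior work for exceptional groups of Lie type together with the Aschbacher classes $\mathcal{C}_1, \mathcal{C}_4$--$\mathcal{C}_8$ and $\mathcal{S}$, one is reduced to studying the action of a finite simple classical group $G$ on the cosets of a maximal subgroup $M$ of type $\mathcal{C}_2$ (stabilizer of a direct-sum decomposition $V = V_1 \oplus \cdots \oplus V_t$ of the natural module into equidimensional summands) or of type $\mathcal{C}_3$ (stabilizer of an $\FF_{q^e}$-vector space structure on $V$, with $e$ a prime dividing $\dim V$).

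Since the action is transitive with point stabilizer $M$, a conjugacy class $C \subset G$ consists entirely of derangements if and only if $C \cap M = \emptyset$. When $C \cap M \neq \emptyset$, the double count $\sum_{g \in C} |\{xM : g \in xMx^{-1}\}| = [G:M] \cdot |C \cap M|$ combined with $|\{xM : g \in xMx^{-1}\}| \geq 1$ yields $|C|/|G| \leq |C \cap M|/|M|$. Summing over regular semisimple classes, with $\mathrm{rss}(G)$ denoting the set of regular semisimple elements,
\[
\frac{|\{g \in \mathrm{rss}(G) : g\ \text{a derangement}\}|}{|G|} \;\geq\; \frac{|\mathrm{rss}(G)|}{|G|} \;-\; \frac{|\mathrm{rss}(G) \cap M|}{|M|}.
\]
By \cite{FG4}, the first ratio is bounded below by an absolute positive constant $\delta_0$; the task is therefore to produce a uniform upper bound on the second ratio that is strictly smaller than $\delta_0$ once $G$ has sufficiently large rank.

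For each type of $M$ one identifies a combinatorial obstruction on the characteristic polynomial (equivalently, on the multiset of eigenvalues in $\overline{\FF_q}$) that every element of $M$ must satisfy. In class $\mathcal{C}_2$, the wreath action on the blocks forces the characteristic polynomial to factor through the cycle structure of a permutation in $S_t$, and in particular its irreducible factor degrees must admit a partition compatible with cycles of uniform block-dimension. In class $\mathcal{C}_3$, the eigenvalue multiset of $g \in M$ must decompose as a disjoint union of $\mathrm{Gal}(\overline{\FF_q}/\FF_{q^e})$-orbits, which constrains the factorization of the characteristic polynomial over $\FF_{q^e}$. The densities of regular semisimple elements satisfying each obstruction are then controlled by the cycle-index generating functions for the classical groups developed in \cite{FG4}; the unitary, symplectic, and orthogonal cases are handled by the same formalism equipped with the standard duality involution on the polynomial ring parametrizing semisimple conjugacy classes.

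The main obstacle is the ``borderline'' regime in which $M$ is largest relative to $G$: two-block imprimitive subgroups ($t = 2$ in $\mathcal{C}_2$) and quadratic extension-field subgroups ($e = 2$ in $\mathcal{C}_3$). Here $|\mathrm{rss}(G) \cap M|/|M|$ is not small in absolute terms, so the inequality above does not suffice via vanishing estimates alone; instead one must perform an explicit asymptotic analysis of the relevant generating functions, showing that the limiting density of regular semisimple elements of $G$ satisfying the obstruction is strictly less than $\delta_0$, uniformly in the field size $q$. The orthogonal case introduces additional subtlety coming from the distinct isometry types of quadratic forms on each invariant summand. Combined with the results of \cite{FG4}, \cite{FG2}, \cite{FG1}, these estimates yield an absolute $\delta$ for which Theorem \ref{thm:regss} holds, from which Theorem \ref{thm:bs} with $\delta = .016$ follows after the finite exceptions are absorbed.
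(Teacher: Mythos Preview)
Your reduction to the Aschbacher classes $\mathcal{C}_2$ and $\mathcal{C}_3$ via \cite{FG4}, \cite{FG2}, \cite{FG1} is correct, and so is the double-counting inequality $|C|/|G| \le |C \cap M|/|M|$ for a non-derangement class $C$. The problem is that the resulting lower bound $|\mathrm{rss}(G)|/|G| - |\mathrm{rss}(G)\cap M|/|M|$ is almost never useful: for essentially every $M$ in $\mathcal{C}_2$ or $\mathcal{C}_3$ the ratio $|\mathrm{rss}(G)\cap M|/|M|$ tends to $1$ as $q \to \infty$, not merely in the ``borderline'' cases $t=2$, $e=2$ you single out. For instance, if $M = GL(n/b,q^b).b$ for \emph{any} fixed prime $b$, a generic element of $M$ has distinct eigenvalues in $\overline{\F_q}$ once $q$ is large and hence is regular semisimple in $G$; the same holds for $M = GL(m,q) \wr S_k$ with $k$ bounded. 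Your displayed lower bound is then negative and the argument collapses generically, not just at the borderline. The later appeal to ``the limiting density of regular semisimple elements of $G$ satisfying the obstruction'' is the right quantity, but you invoke it only for $t=2$, $e=2$ and aim merely for ``strictly less than $\delta_0$'' rather than for a quantity tending to $0$; neither the mechanism nor the uniformity in $q$ is supplied.

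The paper bypasses the inequality entirely and bounds directly the proportion of elements of $G$ (not of $M$) that are regular semisimple and lie in some conjugate of $M$, showing that this proportion tends to $0$ as $n \to \infty$ uniformly in $q$. For imprimitive $M$ one splits the cosets $xH_0$, $x \in S_k$, according to $\ind(x)$: if $\ind(x) \ge k/2$ a crude class count (at most $p(k)q^{n/2}$ classes meeting such cosets) combined with the centralizer lower bound of Theorem~\ref{smallcent} suffices; if $\ind(x) < k/2$ then by Lemma~\ref{lem:index} every element of $xH_0$ fixes a subspace of dimension at least $n/4$, and the subspace estimates of \cite{FG1} apply. For extension-field $M$ the outer cosets are handled by Shintani descent (at most $(b-1)q^{n/b}$ classes) together with Theorem~\ref{smallcent}, while a regular semisimple element of the inner group lies in a maximal torus $T_w$ with every cycle of $w$ of length divisible by $b$; the proportion of such $w$ in the Weyl group is $O(n^{-1/2})$ by an elementary cycle-index estimate (Corollary~\ref{allcycle}, Lemma~\ref{lem:type B Weyl}), and this transfers to $G$ via \cite[\S5]{FG1}. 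Since the total regular semisimple proportion exceeds $.016$ by \cite{FNP} (not \cite{FG4}), subtracting a quantity that tends to $0$ yields the theorem.
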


Note that it is not always the case that there exist derangements which are semisimple.   The simplest example is to take
$G=PSL(2,5) \cong A_5$ acting on $5$ points.  The only derangements are elements of order $5$ which are unipotent.
On the other hand by \cite{GM}, aside from a very small number of simple finite groups of Lie type, there exist semisimple
regular conjugacy classes $C_1$ and $C_2$ such that no proper subgroup intersects both $C_i$ (whence in any action
either $C_1$ or $C_2$ consists of derangements); i.e. the group is invariably generated by $C_1$ and $C_2$ -- that is if $x_i \in C_i$, then
$G = \langle x_1, x_2 \rangle$.

Again, with possibly finitely many exceptions, one can take $\delta = .016$.    In \cite{FG4}, the result was proved for finite groups of Lie type
of bounded rank (and so in particular for the exceptional groups).  Another proof was given in \cite{FG2}.
Thus, it suffices to consider the finite classical groups  (e.g.,  linear, unitary, orthogonal and symplectic groups).  In \cite{FG2}, it was shown that aside
from the families (with regard to the natural module for the classical group):
\begin{enumerate}
\item  reducible subgroups;
\item  imprimitive subgroups (i.e. those stabilizing an additive decomposition of the spaces); and
\item extension field subgroups (i.e. those stabilizing an extension field structure on the natural module).
\end{enumerate}
that the proportion of derangements goes to $1$ as the rank goes to $\infty$.  Combining this result with \cite{FNP} gives
Theorem \ref{thm:regss} for these actions.

In \cite{FG1}, reducible subgroups were considered and Theorems \ref{thm:bs} and \ref{thm:regss} were proved in that case.
Moreover, it was shown that for the action
on an orbit of either totally singular or nondegenerate subspaces (of dimension at most $1/2$ the ambient space),
the proportion of derangements goes to $1$ if the dimension of
the subspaces tended to $\infty$.

In this paper, we deal with the last two families. We will prove:

\begin{thm} \label{thm:wreath}  There exist universal positive constants $A$ and $\delta$ satisfying the following.
Let $G$ be a finite classical group with natural module $V$ of dimension $n$, and
  $V = V_1 \oplus \ldots \oplus V_b$.   Let $H$ be the stabilizer of this decomposition and assume that
$H$ is irreducible on $V$.   Then the proportion of elements of $G$ contained in a conjugate of $H$ (for some $b$)
 is at most $A/n^{\delta}$.
\end{thm}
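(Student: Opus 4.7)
The plan is to bound, for each divisor $d$ of $n$ with $1 \le d < n$, the proportion of $g \in G$ that stabilizes some direct sum decomposition $V = W_1 \oplus \cdots \oplus W_b$ with $\dim W_i = d$ and $b = n/d$ (compatible with the form defining $G$), and then sum these contributions over $d$. Since $H$ is irreducible on $V$, all of the $V_i$ share a common dimension $d$, and the conjugates of $H$ as $b$ varies are exactly parametrized by such decompositions. Because the number of divisors of $n$ grows like $n^{o(1)}$, it suffices to bound the contribution from each fixed $d$ by $A_0 \, n^{-\delta_0}$ for some $\delta_0 > \delta$.

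The key structural observation is that if $g$ preserves such a decomposition and permutes the blocks $W_i$ via a permutation $\pi \in S_b$ with cycle lengths $k_1, \ldots, k_r$, then on the span of the blocks inside the $j$-th cycle of $\pi$, $g$ is conjugate to a block-shift matrix whose characteristic polynomial has the form $p_j(x^{k_j})$ for some monic $p_j$ of degree $d$. Consequently the characteristic polynomial of $g$ on $V$ must factor as $\prod_{j=1}^r p_j(x^{k_j})$ with $\sum_j k_j = b$, a very rigid constraint on its irreducible factorization (and hence on the rational canonical form of $g$). I would use this factorization condition, a ``wreath-product signature'' on the conjugacy invariants of $g$, as the central combinatorial data to estimate.

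The estimation step is then to apply the cycle index generating functions for $\operatorname{GL}(n,q)$, $\operatorname{U}(n,q)$, $\operatorname{Sp}(n,q)$, and $\operatorname{O}^\pm(n,q)$ developed in earlier papers of this series. Because the cycle index factorizes over irreducible polynomials over $\mathbb{F}_q$ (or its quadratic extension in the unitary case), imposing the constraint that the characteristic polynomial have the wreath-product signature above reduces to extracting a specific coefficient in a product of series. After summing over the cycle types of $\pi \in S_b$, I would expect a per-$d$ bound decaying like $q^{-cd}$ or $b^{-c'}$ for suitable absolute positive constants $c, c'$, combined with a small correction to account for conjugates that share a cyclic structure.

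The main obstacle will be obtaining bounds that are uniform across the full range $1 \le d < n$. At the low-$d$ end, especially $d = 1$, the wreath product is the monomial group, $|H|$ is very large, and stabilizing a decomposition forces every eigenvalue of $g$ to lie in an explicit short list of small extensions of $\mathbb{F}_q$; here I would invoke known estimates on the proportion of classical-group elements with this eigenvalue restriction. At the high-$d$ end (e.g.\ $b = 2$) the characteristic-polynomial constraint is mild, but each $g$ can stabilize only a small number of decompositions, so I would use a direct fixed-point count on $G/H$ in that regime. Patching these estimates together with the bounds for reducible subgroups from \cite{FG1}, bounded-rank groups from \cite{FG4}, and the remaining classes from \cite{FG2} will give the uniform bound in Theorem \ref{thm:wreath}, which together with the extension-field analysis later in the paper yields Theorems \ref{thm:bs} and \ref{thm:regss}.
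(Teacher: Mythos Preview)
Your characteristic-polynomial approach has a genuine gap at the most basic case: when the permutation $\pi \in S_b$ induced on the blocks is the identity (or more generally has many fixed points), your ``wreath-product signature'' constraint $\chi_g = \prod_j p_j(x^{k_j})$ degenerates to ``$\chi_g$ is a product of $b$ monic polynomials of degree $d$,'' which is no constraint at all. You flag this only at $b=2$ and propose a ``direct fixed-point count on $G/H$,'' but the same failure occurs for every $b$ whenever $\pi$ is close to the identity, and a fixed-point count gives nothing useful here: when $N_G(H)=H$ the union-of-conjugates bound is trivially $|G|$, and scalars stabilize every decomposition, so ``each $g$ can stabilize only a small number of decompositions'' is simply false without further hypotheses on $g$.

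The paper closes this gap with a dichotomy on $\ind(\pi) = b - \orb(\pi)$. When $\ind(\pi) \ge b/2$, the number of $H_0$-orbits on the coset $\pi H_0$ (where $H_0$ is the base group of the wreath product) is at most $k(G_0)^{\orb(\pi)} \le (cq^d)^{b/2}$; multiplying by the partition bound $p(b)$ for the number of such cycle types and by the maximal class size from Theorem~\ref{smallcent} gives a contribution that is exponentially small in $n$. No cycle index is needed. When $\ind(\pi) < b/2$, Lemma~\ref{lem:index} shows that $\pi$ fixes a subset of every size from $1$ to $b$; hence $g$ stabilizes a subspace (nondegenerate, in the $U/Sp/O$ cases) of dimension at least $n/4$, and the subspace bounds of Theorems~\ref{GLsub}--\ref{Osub} from \cite{FG1} finish. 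This reduction to subspace actions is not incidental patching: it is the entire argument for the small-index half, whereas your proposal mentions \cite{FG1} only as one item in a closing list rather than as the mechanism handling precisely the range where your own constraint collapses. The paper also treats separately (Section~\ref{isotropic}) the stabilizers of a pair of complementary totally singular subspaces, which are irreducible imprimitive subgroups not of the form $G_0 \wr S_b$ with $G_0$ classical of the same type; your outline does not address these.
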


We also prove the following result for $GL$.

\begin{thm} \label{thm:PG}  There exist  positive constants $A$ and $\delta$ satisfying
the following.  Let $G=GL(n,q)$.    Let $X(G)$ denote the union  of all   irreducible subgroups of $G$
not containing $SL(n,q)$.   Then the proportion of elements in any given coset
of $SL(n,q)$ contained in $X(G)$ is at most $A/n^{\delta}$.
\end{thm}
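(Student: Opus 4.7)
The plan is to reduce to maximal irreducible subgroups via Aschbacher's theorem, bound the contribution of each Aschbacher class using the main theorems of this paper and its predecessors, and then handle the coset restriction via the determinant map.

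Every element of $X(G)$ lies in a maximal irreducible subgroup of $G = GL(n,q)$ not containing $SL(n,q)$, and by Aschbacher's classification such a subgroup falls in one of the geometric families $\C_2$ (imprimitive), $\C_3$ (extension field), $\C_4$ or $\C_7$ (tensor product), $\C_5$ (subfield), $\C_6$ (extraspecial normalizer), $\C_8$ (classical), or in the family $\mathcal{S}$ of almost simple subgroups acting absolutely irreducibly modulo scalars. It suffices to bound each class separately and sum over these constantly many classes. The dominant contributions come from $\C_2$ and $\C_3$: Theorem~\ref{thm:wreath} bounds the proportion of $G$ lying in a conjugate of an imprimitive subgroup by $A/n^{\delta}$, and the other main theorem of this paper treats extension field subgroups with a bound of the same shape. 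For the remaining families $\C_4$--$\C_8$ and $\mathcal{S}$, the order of any single maximal subgroup is at most $|GL(n,q)|\cdot q^{-cn}$ for some $c>0$, and the number of $G$-conjugacy classes of such maximal subgroups grows at most polynomially in $n$ (via Kleidman--Liebeck for the geometric classes and Liebeck's $|M|\le q^{3n}$ bound for $\mathcal{S}$). A union bound therefore gives an exponentially small contribution from these classes.

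To pass from a bound on the $G$-proportion to one on the proportion in a fixed coset $gSL$, I would use the identity
\[
\frac{|M \cap gSL|}{|SL|} \;=\; \frac{[\F_q^* : \det(M)] \cdot |M|}{|G|}
\]
when $\det(g) \in \det(M)$, and $0$ otherwise. For any irreducible subgroup $M$, Schur's lemma together with the fact that $M$ is normalized by the center $Z(G) = \F_q^* \cdot I$ yields $\det(M) \supseteq (\F_q^*)^n$, so $[\F_q^* : \det(M)] \le \gcd(n,q-1) \le n$. Consequently the per-coset proportion is at most $n$ times the $G$-proportion, and summing over the finitely many Aschbacher classes yields a bound of the shape $A'/n^{\delta'}$ on each coset.

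The main obstacle is the factor-of-$n$ loss in the last step: it is harmless if the $\delta$'s coming from Theorem~\ref{thm:wreath} and the extension field theorem exceed $1$, but otherwise one must refine those proofs by incorporating the determinant of each element as an additional variable in the cycle-index and generating-function machinery used for the imprimitive and extension field families, so that the $A/n^{\delta}$ estimate holds per coset directly rather than only on average. Once this refinement is in place for the dominant classes $\C_2$ and $\C_3$, combining with the exponential decay from the other classes and summing over the constantly many Aschbacher families yields the theorem.
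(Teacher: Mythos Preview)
Your high-level strategy---reduce via Aschbacher, handle $\C_2$ and $\C_3$ with the results of this paper, and absorb the remaining classes using the exponential gap from \cite{FG2}---is exactly the route the paper takes.  The difficulty is entirely in the passage to a fixed coset, and here your shortcut has a genuine gap.

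The displayed identity
\[
\frac{|M \cap gSL|}{|SL|} \;=\; [\F_q^{*}:\det(M)]\,\frac{|M|}{|G|}
\]
is correct for a \emph{single} subgroup $M$, but what must be bounded is $|U\cap gSL|/|SL|$ for $U=\bigcup_{x} xMx^{-1}$.  The set $U$ is still supported on the $|\det(M)|$ cosets with determinant in $\det(M)$, so the quantity $[\F_q^{*}:\det(M)]\cdot|U|/|G|$ is only the \emph{average} of $|U\cap gSL|/|SL|$ over those cosets, not an upper bound for each one.  Thus the conclusion ``per-coset proportion $\le n\cdot(G\text{-proportion})$'' is not justified.  (A minor point: $\det(M)\supseteq(\F_q^{*})^{n}$ follows from maximality---if $Z\not\le M$ then $MZ=G$, so $G/M$ is abelian and $M\supseteq SL$---rather than from Schur's lemma.)

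The fix you yourself propose at the end, namely to prove the $\C_2$ and $\C_3$ bounds coset-by-coset, is precisely what the paper does.  For $\C_2$ the argument in Section~\ref{wreathprod} (Theorem~\ref{Gl}) is already carried out per coset: the class-counting half of the argument bounds the number of $G$-classes meeting $H$, and a $G$-class lies in a single $SL$-coset, so one only loses a factor of $q-1$, which is harmless against $q^{n/2}$; the subspace half quotes Theorem~\ref{GLsub}, which is itself a per-coset statement.  For $\C_3$ the paper does not ``refine the generating functions with a determinant variable'' but instead, in the Appendix, splits into two regimes: if $q<n^{1/4}$ the crude loss of a factor $q-1$ against the $G$-bound $A/n^{1/2}$ still leaves $A/n^{1/4}$; if $q\ge n^{1/4}$ one first discards the non--regular-semisimple elements (proportion $O(1/q)\le O(n^{-1/4})$ in every coset) and then uses that the torus/Weyl-group correspondence of \cite[\S5]{FG1} already gives per-coset control.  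For the remaining Aschbacher classes the bound from \cite{FG2} is again obtained by counting $G$-classes meeting $M$, so it too transfers to each coset with only a factor of $q-1$, which is negligible against the exponential saving there.
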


In the previous result, $q$ can increase or be fixed.  This result was proved by Shalev \cite{Sh} for $GL(n,q)$ with $q$
fixed using deep work of Schmutz \cite{Sc} on orders of elements in general linear groups. Our technique is
more elementary.    In fact, one cannot do much better than the previous result.     Suppose that $n = 2m$.  The proportion of
elements contained in $GL(m,q) \times GL(m,q) < GL(m,q) \wr S_2 $  for $q$ large is approximately the same as
the proportion of elements in $S_n$ which fix a subset of size $m$.   By \cite{EFG}, this is approximately
of order $n^{-\delta} (1+\log \frac{n}{2})^{-3/2}$

$$
\delta =  1  - \frac{1 + \log \log 2} {\log 2}.
$$

The analog of Theorem \ref{thm:PG} is true for the other classical groups.  However, the proof requires some new results and will be proved
in a sequel (where we will also give an application to probabilistic generation).   We do show  the following:

\begin{thm} \label{thm:large}   Let $G$ be a finite classical group with natural module $V$ of dimension $n$.
Assume that $G$ is defined over $\mathbb{F}_q$.   Let $X(G)$ denote the union of all   irreducible subgroups of $G$
not containing the derived subgroup of $G$  (if $q$ is even and $G=Sp(2n,q)$, we exclude the subgroups $O^{\pm}(2n,q)$ from $X(G)$).
Let $Y(G)$ denote the set of regular semisimple elements contained in $X(G)$.
\begin{enumerate}
\item $ \lim_{n \rightarrow \infty} |Y(G)|/|G| = 0$;  and
\item  $ \lim_{\min \{n,q\} \rightarrow \infty}   |X(G)|/|G| = 0.$
\end{enumerate}
\end{thm}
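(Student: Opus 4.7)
The plan is to combine Aschbacher's classification of maximal subgroups with the main theorems of this paper and of the earlier papers in the series. I first observe that any proper irreducible subgroup $H \leq G$ with $H \not\supseteq G'$ sits inside some maximal subgroup $M$, which is necessarily irreducible (a reducible $M$ would force $H$ to preserve a proper subspace) and in the classical-group setting also fails to contain $G'$; so by Aschbacher's theorem $M$ belongs to one of the geometric classes $\mathcal{C}_2,\ldots,\mathcal{C}_8$ or to the almost-simple class $\mathcal{S}$ (the reducible class $\mathcal{C}_1$ being excluded, and the $O^\pm(2n,q)<Sp(2n,q)$ exception in characteristic $2$ being exactly the known anomaly that the hypothesis of the theorem explicitly removes). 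Writing $X_i(G)$ for the union of $G$-conjugates of maximal subgroups in $\mathcal{C}_i$, and $X_{\mathcal{S}}(G)$ analogously, I then have $X(G)\subseteq \bigcup_{i=2}^{8} X_i(G) \cup X_{\mathcal{S}}(G)$, so it suffices to bound each summand.

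The bulk of the work is in classes $\mathcal{C}_2$ and $\mathcal{C}_3$. For $\mathcal{C}_2$, Theorem~\ref{thm:wreath} directly yields $|X_2(G)|/|G|\le A/n^{\delta}$ uniformly in $q$. For $\mathcal{C}_3$ (extension field), I would use the extension-field analogue proved in the body of this paper (the same analogue that underlies Theorem~\ref{thm:PG} in the $GL$ case) to obtain $|X_3(G)|/|G|\le A'/n^{\delta'}$. For the remaining classes $\mathcal{C}_4$ through $\mathcal{C}_8$ and $\mathcal{S}$, the results of \cite{FG2} show that, for any maximal $M$ in these classes, the proportion of derangements on $G/M$ tends to $1$ as $\min\{n,q\}\to\infty$; since the number of $G$-conjugacy classes of such maximal subgroups is bounded by a slowly growing function of $n$, a union bound yields $\sum_{i\ge 4}|X_i(G)|/|G|\to 0$ in the same limit, establishing (2).

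For (1), the key additional input is \cite{FNP}, which supplies a uniform-in-$q$ positive lower bound on the proportion of regular semisimple elements in $G$ together with cycle-index enumerations of regular semisimple conjugacy classes in $GL$, $Sp$, and $O^{\pm}$. Using these, I would derive uniform-in-$q$ regular-semisimple refinements of each class-by-class estimate above: for $\mathcal{C}_2$ and $\mathcal{C}_3$ directly from the generating functions underlying Theorem~\ref{thm:wreath} and its extension-field analogue, and for the remaining classes from the uniform-in-$q$ regular-semisimple counts already in \cite{FG4, FG2}. This is what allows $|Y(G)|/|G|\to 0$ to hold as $n\to\infty$ alone, without $q$ having to grow; the non-regular-semisimple portion of $X(G)$, which contains the small-$q$ obstructions, is what forces the extra hypothesis $q\to\infty$ in (2).

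The main obstacle will be the $\mathcal{C}_3$ estimate: for each divisor $e\mid n$ with $e>1$, one has to bound the proportion of elements of $G$ lying in a $G$-conjugate of the extension-field subgroup obtained by viewing $V$ as an $(n/e)$-dimensional $\mathbb{F}_{q^e}$-space by something like $A/n^{\delta}$ with constants independent of $q$. This is the extension-field analogue of Theorem~\ref{thm:wreath}; its proof is the technical heart of the paper and is parallel in spirit to the imprimitive case, but with a different generating-function input keyed to Frobenius orbits on characteristic polynomials over $\mathbb{F}_{q^e}$. Once this bound is secured, the rest of the argument is Aschbacher-class bookkeeping.
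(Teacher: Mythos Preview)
Your overall architecture is the paper's: reduce to Aschbacher classes, cite \cite{FG2} for $\mathcal{C}_4$--$\mathcal{C}_8$ and $\mathcal{S}$, and use Sections~\ref{wreathprod} and~\ref{extensionfield} for $\mathcal{C}_2$ and $\mathcal{C}_3$. That is exactly how the paper derives Theorem~\ref{thm:large} in Section~\ref{sec:proofs}.

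There is, however, a genuine misattribution in your $\mathcal{C}_3$ step. You assert that the body of the paper proves a uniform-in-$q$ bound $|X_3(G)|/|G|\le A'/n^{\delta'}$ for \emph{all} elements and all classical groups, calling it ``the extension-field analogue of Theorem~\ref{thm:wreath}''. The paper does not prove this. Section~\ref{extensionfield} bounds only the \emph{regular semisimple} (or strongly regular semisimple) elements lying in extension-field subgroups: see Theorems~\ref{regssGL}, \ref{thm:sp-ss}, \ref{thm:so-ss}, \ref{them:U in Sp regss}, \ref{thm:U in O regss}. The appendix upgrades this to all elements only for $GL$, and the introduction explicitly states that the analogue for the other classical groups ``requires some new results and will be proved in a sequel''. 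So the inequality you invoke for part~(2) is simply not available here outside of $GL$.

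The fix is the one you yourself sketch two paragraphs later but do not actually use: for part~(2), split $X_3(G)$ into its regular semisimple part, which Section~\ref{extensionfield} bounds by $A/n^{1/2}$ uniformly in $q$, and its non-regular-semisimple part, whose proportion in $G$ is $O(1/q)$; both tend to $0$ under $\min\{n,q\}\to\infty$. That is precisely why the theorem has the $\min\{n,q\}\to\infty$ hypothesis in (2) but only $n\to\infty$ in (1). As written, your second paragraph claims a stronger $\mathcal{C}_3$ bound than the paper establishes, and your final paragraph identifies as ``the technical heart of the paper'' a result the paper defers. Part~(1) is unaffected, since there only regular semisimple elements are at issue and Section~\ref{extensionfield} suffices directly. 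Also note that \cite{FG2} already gives the $\mathcal{C}_4$--$\mathcal{C}_8$, $\mathcal{S}$ bounds as $n\to\infty$ uniformly in $q$, not merely as $\min\{n,q\}\to\infty$; you need that stronger form for part~(1).
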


Luczak and Pyber \cite{LP} proved the analog of Theorem \ref{thm:PG}  for symmetric and alternating
groups (with irreducible replaced by transitive).  Their result has been recently improved by Eberhart,
Ford and Green \cite{EFG}.

Section \ref{recall} recalls bounds from our paper \cite{FG2} on the number and sizes of conjugacy classes in
finite classical groups. It also recalls needed results from the paper \cite{FG1} on derangements in subspace actions of finite classical groups.
Section \ref{sec:sym} contains some results on Weyl groups that we require.

	Section \ref{wreathprod} proves a strengthening of the Boston-Shalev conjecture for stabilizers of imprimitive subgroups in the case of large rank. For example it shows that the proportion of elements of any coset of $SL(n,q)$ in $GL(n,q)$ which are contained in a conjugate of the wreath product $GL(m,q) \wr S_k$ goes to $0$ as $n=mk \rightarrow \infty$. Moreover it is shown that the same is true for families of maximal subgroups, i.e. that the proportion of elements of any coset of $SL(n,q)$ of $GL(n,q)$ which are contained in a conjugate of $GL(m,q) \wr S_k$ for some $m,k$ such that $mk=n$ (with $k>1$) also goes to $0$ as $n \rightarrow \infty$. This behavior is qualitatively different from the case of subspace actions. Namely it is proved in \cite{FG1} that as $k \rightarrow \infty$, the proportion of elements fixing a k-space tends to 0. But this does {\it not} hold for families; the probability that a random element fixes a $k$ space for some $1 \leq k \leq n/2$ tends to $1$ as  $n$ tends to infinity.
	
	Section \ref{extensionfield} proves the Boston-Shalev conjecture for extension field subgroups in the case of large rank. For example it shows that the proportion of elements in a coset $gSL(n,q)$ of $GL(n,q)$ which are both regular semisimple and contained in a conjugate of $GL(n/b,q^b).b$ is at most $A/n^{1/2}$, for a universal constant $A$. The $.b$ notation means semidirect product with the cyclic group of order b generated by the map $x \rightarrow x^q$ on $\F_{q^b}^*$.

    Section \ref{sec:proofs} shows how the five theorems stated in the introduction follow from earlier results (the proof of Theorem \ref{thm:PG}
    also uses the appendix). In the appendix we use generating functions to prove a strengthening of the Boston-Shalev conjecture for $GL$ in the case of extension field subgroups of large rank. Namely the appendix shows that the proportion of elements (not necessarily regular semisimple) in a coset $gSL(n,q)$ of $GL(n,q)$ which are contained in a conjugate of $GL(n/b,q^b).b$ goes to $0$ as $n \rightarrow \infty$. The argument does not easily generalize to the other classical groups. In a follow-up paper, we use a different method to prove this strengthening (and so the analog of Theorem \ref{thm:PG}) for the other classical groups.

\section{Background} \label{recall}

This brief section recalls some bounds from our papers \cite{FG1} and \cite{FG2}.
Let $k(G)$ denote the number of conjugacy classes of $G$.  More generally, if $N$
is a normal subgroup of $G$ and $g \in G$, let $k(Ng)$ denote the number of $N$-orbits
on the coset $Ng$.  By \cite{FG2}, $k(Ng)$ is precisely the number of $g$-stable conjugacy
classes in $N$.

First note that $k(GL(n,q)) \leq q^n$ and that $k(U(n,q)) \leq 8.26 q^n$ \cite{MR}.

From \cite{FG2}, we have upper bounds on the number of conjugacy classes in a finite classical group
of the form $cq^r$ where c is an explicit constant and $r$ is the rank  (indeed for the simply connected
groups, one gets bounds of the form $q^r + dq^{r-1}$ for an explicit $d$).

\centerline
{{\sc Table 1} \quad  Class Numbers for Classical Groups}~
\vspace{0.3cm}
\begin{center}
\begin{tabular}{|c||c|c|} \hline
 $G$ & $k(G)  \le $ &   Comments   \\ \skipa \hline \hline
$SL(n,q)$  & $ 2.5 q^{n-1}$ &  \\ \hline $SU(n,q)$  & $8.26 q^{n-1}$  &
\\  \hline $Sp(2n,q)$ &  $10.8q^n$ &   $q$ odd   \\ \hline
$Sp(2n,q)$  &  $15.2q^n$ &   $q$ even   \\ \hline
 $SO(2n+1,q)$ & $7.1q^n$    &   $q$ odd \\ \hline
 $\Omega(2n+1,q)$ & $7.3q^n$ &   $q$ odd \\ \hline
 $SO^{\pm}(2n,q)$ &  $7.5q^n$ &   $q$ odd \\ \hline
$\Omega^{\pm}(2n,q)$ &  $6.8q^n$         &   $ q$ odd \\ \hline
$O^{\pm}(2n,q)$ &  $9.5q^n$         &   $q$ odd \\ \hline
$SO^{\pm}(2n,q)$ &  $14q^n$         &   $q$ even \\ \hline
$O^{\pm}(2n,q)$ &  $15q^n$         &   $q$ even \\ \hline
\end{tabular}
\end{center}

Concerning centralizer sizes, the following lower bound is proved in \cite{FG2}.

\begin{theorem} \label{smallcent}
\begin{enumerate}

\item  Let $G$ be a connected
simple algebraic group of rank $r$  of adjoint type over a field of positive
characteristic. Let $F$ be a Steinberg-Lang endomorphism of $G$ with $G^F$
a finite Chevalley group over the field $\F_q$.   There is an absolute constant
$A$ such
that for all $x \in G^F$,
 $$
 |C_{G^F}(x)| >  \frac{q^{r}}{A (1 + \log_q r)}.
 $$

\item There is a universal constant $A$ such that for all $x \in GL(n,q)$,
\[ |C_{GL(n,q)}(x)| > \frac{q^n}{A (1 + \log_q n)}. \]

\item There is a universal constant $A$ such that for all $x \in U(n,q)$,
\[ |C_{U(n,q)}(x)| > \frac{q^n}{A (1 + \log_q n)}. \]

\end{enumerate}

\end{theorem}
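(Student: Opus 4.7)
The plan is to prove parts (2) and (3) by a direct combinatorial analysis of conjugacy classes in $GL(n,q)$ and $U(n,q)$, and then to deduce part (1) via Jordan decomposition.

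For part (2), I would use the classical parametrization of $GL(n,q)$-conjugacy classes by partition-valued functions $\phi \mapsto \lambda^{(\phi)}$ on the set of nonzero monic irreducible polynomials over $\mathbb{F}_q$, subject to $\sum_\phi d_\phi |\lambda^{(\phi)}| = n$ (where $d_\phi = \deg \phi$). The centralizer of the associated element factors as
$$|C_{GL(n,q)}(x)| = q^{\sum_\phi d_\phi(|\lambda^{(\phi)}| + 2n(\lambda^{(\phi)}))} \cdot E(x),$$
with $n(\lambda) = \sum_i (i-1)\lambda_i \geq 0$ and $E(x) = \prod_\phi \prod_i \prod_{j=1}^{m_i(\lambda^{(\phi)})}(1 - q^{-d_\phi j})$. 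Since the $q$-exponent is at least $n$, it suffices to bound $E(x) \geq 1/(A(1+\log_q n))$. Taking logarithms and using $-\log(1-t) \leq 2t$ for $t \leq 1/2$ (handling the single exceptional triple $(q, d_\phi, j) = (2, 1, 1)$ as a bounded additive correction), one obtains
$$-\log E(x) \leq 2 \sum_\phi \frac{\ell^*(\lambda^{(\phi)})}{q^{d_\phi}-1} + O(1),$$
where $\ell^*(\lambda)$ counts the distinct parts of $\lambda$. Grouping $\phi$ by degree $d$ and combining Gauss's bound $m(d) := \#\{\phi : d_\phi = d\} \leq q^d/d$ with the budget constraint $\sum_d d \cdot m(d) \leq n$, a split at $d \approx \log_q n$ yields $-\log E(x) = O(1 + \log(1+\log_q n))$, which translates to the claimed inequality.

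Part (3) is analogous, using the parametrization of $U(n,q)$-conjugacy classes by partition-valued functions on monic polynomials closed under the twisting involution from the unitary form, together with an analogous product formula for centralizer orders. Part (1) then follows from (2), (3), and the analogous statements for symplectic and orthogonal groups (all provable by the same combinatorial method, with the bounded-rank cases covered in \cite{FG4}) via Jordan decomposition: given $x = su \in G^F$, set $H := C_G(s)^\circ$, a connected reductive subgroup of the same rank $r$ as $G$, which contains the unipotent part $u$. Since $H^F$ is a commuting product of classical and exceptional subquotients whose ranks sum to $r$, applying the appropriate centralizer bound to $u$ in each factor yields $|C_{G^F}(x)| \geq |C_{H^F}(u)| \geq q^r/(A(1 + \log_q r))$.

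The main obstacle is the combinatorial estimate in part (2) (and its analogs): producing a \emph{linear} (rather than polynomial) logarithmic correction requires sharp bookkeeping to separate the ``many polynomials of small degree'' regime (controlled by $m(d) \leq q^d/d$) from the ``few polynomials of large degree'' regime (controlled by $m(d) \leq n/d$). This is most delicate over $\mathbb{F}_2$, where many small-degree irreducibles can each contribute logarithmically to $-\log E(x)$, and the two constraints must be combined carefully at the crossover $d \approx \log_q n$ to avoid incurring extra polylogarithmic losses.
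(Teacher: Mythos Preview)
The paper does not prove this statement here; it is quoted from \cite{FG2}, so there is no in-paper argument to compare against. I therefore comment only on the soundness of your proposal.

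Your reduction in part (2) has a genuine gap. You assert that, since the $q$-exponent in the centralizer formula is at least $n$, it suffices to prove $E(x)\ge 1/(A(1+\log_q n))$, and you then claim $-\log E(x)=O(1+\log(1+\log_q n))$. That estimate is false. Take $q=2$, let $\phi=x+1$ be the unique degree-$1$ irreducible with nonzero constant term, set $\lambda^{(\phi)}=(k,k-1,\dots,2,1)$ and all other $\lambda^{(\psi)}$ empty; then $n=k(k+1)/2$, each $m_i(\lambda^{(\phi)})=1$, and
\[
E(x)=\prod_{i=1}^{k}(1-2^{-1})=2^{-k},\qquad -\log E(x)=k\log 2\asymp\sqrt{n}.
\]
The step that fails is your ``split at $d\approx\log_q n$'': the constraints you invoke, namely $m(d)\le q^d/d$ and $\sum_d d\,m(d)\le n$, control only the \emph{number} of polynomials $\phi$ that occur, not the quantities $\ell^*(\lambda^{(\phi)})$, which can be of order $\sqrt{n}$ at a single $\phi$.

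What actually makes the theorem true is the term you discarded. In the staircase example the exponent $\sum_\phi d_\phi\bigl(|\lambda^{(\phi)}|+2n(\lambda^{(\phi)})\bigr)=\sum_i(\lambda_i')^2\asymp k^3\asymp n^{3/2}$ exceeds $n$ by far more than enough to absorb the factor $2^{-k}$. A correct argument must keep the $2n(\lambda)$ contribution in play and trade it off against the product $\prod_i(1/q^{d_\phi})_{m_i}$ rather than bounding $E(x)$ in isolation; this is how the proof in \cite{FG2} proceeds. Your outline for part (1) via Jordan decomposition and the factorization of $C_G(s)^\circ$ is reasonable, but it rests on parts (2) and (3), so the gap above must be repaired first.
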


Regarding derangements in subspace actions of finite classical groups, we recall the following results from
\cite{FG1}.

\begin{theorem} \label{GLsub} For $1 \leq k \leq n/2$, the proportion of elements of any coset of $SL(n,q)$ in $GL(n,q)$ which fix a $k$-space is at most $A/k^{.005}$, for $A$ a universal constant.
\end{theorem}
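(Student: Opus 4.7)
The plan is to reduce fixing a $k$-space to a combinatorial statement about the primary decomposition of $g$, and then to control the combinatorial side via the cycle index of $GL(n,q)$ in analogy with the symmetric group argument of Luczak--Pyber \cite{LP}.

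First I would establish the reduction: $g$ fixes a $k$-dimensional subspace iff $k = \sum_d d \cdot b_d$ for some integers $0 \le b_d \le A_d(g)$, where $A_d(g)$ is the total $\F_q$-multiplicity of irreducible factors of degree $d$ in the rational canonical form of $g$ (formally, $A_d(g) = \sum_{\deg\phi = d} |\lambda_\phi(g)|$). Indeed, any $g$-invariant subspace is a direct sum of invariant subspaces of each primary component $V_\phi$; on $V_\phi$, every submodule has $\F_q$-dimension divisible by $\deg \phi$, and submodules of every allowed dimension $j\deg\phi$ with $0 \le j \le |\lambda_\phi|$ do exist. The resulting condition is the same one that governs when a permutation fixes a $k$-subset, with $A_d$ replacing the number of $d$-cycles.

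Next I would use the cycle index of $GL(n,q)$, which factors as a product over monic irreducible polynomials $\phi \neq x$, to show that the joint distribution of the $A_d$'s mimics the cycle distribution in $S_n$ in the relevant sense: $E[A_d] \approx 1/d$, and the $A_d$ for different $d$ are nearly independent. Restricting to a coset of $SL(n,q)$ amounts to imposing a single determinant condition; by expanding via characters of $\F_q^*$, this perturbs the cycle-index probabilities by only a bounded multiplicative factor on the events of interest.

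Finally, I would adapt the Luczak--Pyber argument, splitting by the range of degrees contributing to a putative subset-sum equal to $k$: for contributions concentrated in bounded degrees, concentration of the $A_d$'s forces the small ``cycles'' to cluster in a set of density $o(1)$ near $k$; for contributions dominated by a single large irreducible factor (of degree close to $k$), one sums over the roughly $q^d/d$ available degree-$d$ irreducibles and bounds the remaining small adjustment. The main obstacle, I expect, will be pinning down an exponent such as $0.005$ uniformly in $n$, $q$, and across all cosets: because the $A_d$'s in $GL(n,q)$ have somewhat heavier tails than in $S_n$ (reflecting the Jordan block parameters $\lambda_\phi$), one must use explicit generating-function estimates from cycle index theory rather than a direct black-box appeal to the symmetric group result.
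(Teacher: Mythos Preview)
The present paper does not prove this statement: Theorem~\ref{GLsub} is listed in Section~\ref{recall} as a background result quoted from \cite{FG1}, with no proof given here. So there is no proof in this paper to compare your proposal against.

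That said, your outline is broadly in the spirit of what \cite{FG1} does. Your reduction step is correct: any $g$-invariant subspace decomposes along the primary decomposition $V=\bigoplus_\phi V_\phi$, and within a primary component with $\deg\phi=d$ the invariant subspaces realize every $\F_q$-dimension $jd$ for $0\le j\le |\lambda_\phi|$, so collapsing by degree yields exactly the subset-sum condition $k=\sum_d d\,b_d$ with $0\le b_d\le A_d(g)$. The analogy with cycle lengths in $S_n$ is then the right one, and \cite{FG1} does exploit this via the cycle index for $GL(n,q)$ together with a transfer between Weyl-group estimates and classical-group estimates (the machinery invoked repeatedly in this paper as ``arguing as in \cite[\S5]{FG1}''). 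Handling cosets of $SL(n,q)$ by twisting with characters of $\F_q^*$ is also standard and used there.

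Where your sketch is thinnest is precisely where the actual work in \cite{FG1} lies: getting a uniform power saving such as $k^{-.005}$, valid for all $n,q$ and all cosets, is not a soft adaptation of Luczak--Pyber but requires explicit generating-function estimates (and some case analysis for small $q$). Your remark that the $A_d$'s have heavier tails than cycle counts in $S_n$, because of the partition parameters $\lambda_\phi$, correctly identifies the extra difficulty; this is exactly why one cannot simply quote the symmetric-group bound but must redo the estimates with the $GL$ cycle index. As written, your proposal is a reasonable plan rather than a proof.
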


\begin{theorem} \label{Usub}
For $1 \leq k \leq n/2$, the proportion of elements of any coset of $SU(n,q)$ in $U(n,q)$ which fix a nondegenerate $k$-space is at most $A/k^{.005}$, for $A$ a universal constant, and the proportion of elements of any coset of $SU(n,q)$ in $U(n,q)$ which fix a totally singular $k$-space is at most $A/k^{.25}$, for $A$ a universal constant.
\end{theorem}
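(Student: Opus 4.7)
The plan is to adapt the strategy used in Theorem \ref{GLsub} for $GL(n,q)$ to the unitary setting, relying on the cycle index generating function for $U(n,q)$ developed in previous work of the first author. The combinatorial starting point is the parametrization of conjugacy classes of $U(n,q)$ by tuples of partitions indexed by irreducible monic polynomials over $\F_{q^2}$ with nonzero constant term. There is an involution $f \mapsto f^*$ on such polynomials, defined by $f^*(t) = t^{\deg f}\overline{f(1/t)}/\overline{f(0)}$, where bar denotes the $q$-power Frobenius. For $g \in U(n,q)$ acting on $V = \F_{q^2}^n$, self-conjugate irreducible factors ($f=f^*$) of the characteristic polynomial of $g$ contribute nondegenerate $g$-invariant primary summands, while non-self-conjugate pairs $\{f,f^*\}$ contribute hyperbolic $g$-invariant summands, each of which is an orthogonal sum of two totally singular $g$-invariant subspaces.

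For the nondegenerate part, note that $g$ fixes a nondegenerate $k$-space if and only if $V$ admits an orthogonal decomposition $V = W \oplus W^{\perp}$ with $W$ a $g$-invariant nondegenerate subspace of dimension $k$. By the primary decomposition this is equivalent to splitting the self-conjugate primary summands and the hyperbolic pair summands of $V$ into two orthogonal bunches of total dimensions $k$ and $n-k$. The probability of such a partition existing has a generating function (in a formal variable $u$ marking $n$) that factors as an Euler-type product indexed by $*$-orbits of irreducible polynomials, and one would estimate its $n$-th coefficient by a contour integral / saddle-point argument. The exponent $0.005$ then emerges from essentially the same analytic estimate as in the $GL$ case, since the nondegenerate direct-sum decomposition in $U(n,q)$ is structurally parallel to the direct-sum decomposition in $GL(n,q)$ at the level of cycle indices.

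For the totally singular part, the stabilizer of a totally singular $k$-space is a maximal parabolic $P_k$ of $U(n,q)$. Fixing such a subspace translates into extracting a totally singular piece of dimension $k$ from the $g$-primary data, which forces contributions from non-self-conjugate pair primary components (together with possible totally singular pieces inside hyperbolic subconfigurations of self-conjugate components). The associated generating function is much sparser than in the nondegenerate case, and this is what makes the stronger exponent $0.25$ attainable.

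The principal obstacle is pinning down the explicit numerical exponents, especially the $0.005$ in the nondegenerate bound, which requires careful saddle-point bookkeeping reflecting the difference between the unitary base field $\F_{q^2}$ and the linear base field $\F_q$, as well as the extra involution $f\mapsto f^*$ in the unitary cycle index. A secondary, more routine, obstacle is transferring the estimate from the group $U(n,q)$ to an arbitrary coset of $SU(n,q)$; since $U(n,q)/SU(n,q)$ is cyclic of order $q+1$ and determinants can be tracked factor-by-factor through the Euler product, the per-coset estimate loses only a bounded multiplicative constant that can be absorbed into $A$.
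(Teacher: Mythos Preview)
The paper does not actually prove this theorem: it is stated in Section~\ref{recall} as a result quoted from the earlier paper \cite{FG1} in the series, with no argument given here. So there is no proof in the present paper to compare your proposal against.

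That said, your sketch is broadly aligned with the methods of \cite{FG1}: the cycle index for $U(n,q)$, the $f\mapsto f^*$ involution on irreducible polynomials over $\F_{q^2}$, the dichotomy between self-conjugate and non-self-conjugate primary components, and the reduction of the coset question to determinant bookkeeping are all the right ingredients. Where your proposal is thin is precisely at the point you flag yourself: the numerical exponents $0.005$ and $0.25$ do not fall out of the structural description alone but require genuine analytic work on the resulting generating functions, and ``saddle-point bookkeeping'' is a placeholder rather than an argument. In \cite{FG1} this step is substantial and case-dependent, so as written your proposal is a correct outline of the strategy but not yet a proof.
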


\begin{theorem} \label{Spsub}
For $1 \leq k \leq n/2$, the proportion of elements of $Sp(2n,q)$ which fix a nondegenerate $2k$ space is at most $A/k^{.005}$, for $A$ a universal constant, and for $1 \leq k \leq n$, the proportion of elements of $Sp(2n,q)$ which fix a totally singular $k$-space is at most $A/k^{.25}$, for $A$ a universal constant.
\end{theorem}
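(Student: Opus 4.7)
My plan is to prove both bounds via cycle-index generating functions for $Sp(2n,q)$. The stabilizer of a nondegenerate $2k$-subspace is $Sp(2k,q)\times Sp(2(n-k),q)$, while the stabilizer of a totally singular $k$-subspace is the maximal parabolic $P_k$ with Levi factor $GL(k,q)\times Sp(2(n-k),q)$. In each case, the proportion of $g\in Sp(2n,q)$ fixing \emph{some} subspace of the given type is at most the expected number of such invariant subspaces (since we only need an upper bound). Fulman's cycle index for the symplectic group expresses this expected count as a coefficient in an Euler product indexed by self-dual pairs $\{\phi,\phi^*\}$ of monic irreducible polynomials over $\mathbb{F}_q$ (where $\phi^*(t)=t^{\deg\phi}\phi(1/t)/\phi(0)$), together with separate local factors at the self-dual polynomials $t-1$, $t+1$, and the $\phi=\phi^*$ of even degree.

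For nondegenerate $2k$-subspaces, an invariant such subspace corresponds exactly to a splitting of the rational canonical form data of $g$ into two self-dual halves of total degrees $2k$ and $2(n-k)$. The sum over all invariant $2k$-subspaces therefore reduces, via the cycle index product, to a coefficient-extraction problem whose $q\to\infty$ limit is precisely the $W(B_n)$-analog of counting permutations in $S_n$ that fix a $k$-subset. Applying the bound of Eberhard--Ford--Green \cite{EFG} for $S_n$ and extending to the signed setting gives an $O(k^{-\delta_0})$ estimate for large $k$. The class number bounds of Table 1 and the centralizer bounds of Theorem \ref{smallcent} then supply the uniformity in $q$ needed to transfer this back to $Sp(2n,q)$ itself; the modest exponent $.005$ is chosen conservatively to accommodate the losses in the two transfers (signed extension and finite $q$).

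For totally singular $k$-subspaces the combinatorics is more delicate: only the self-dual polynomial data ($\phi=\phi^*$ and $\phi\in\{t-1,t+1\}$) can contribute an isotropic summand, so the generating function is sparser and its coefficients decay only like $k^{-1/4}$. Alternatively, one can relate the number of invariant totally singular $k$-subspaces to the number of invariant $k$-subspaces for a suitable $GL$ action on a Lagrangian complement and invoke the analog of Theorem \ref{GLsub}, paying a factor reflecting that the Lagrangian is not canonically split. The main obstacle in both parts is producing coefficient bounds uniform in $k$ and $q$ simultaneously: small characteristic (especially $q=2$) and the eigenvalue $\pm 1$ contributions require separate treatment, controlled by Theorem \ref{smallcent} to bound the non-regular-semisimple contribution and then analyzed polynomial by polynomial for the regular semisimple part.
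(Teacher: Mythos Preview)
This theorem is not proved in the present paper; it is quoted from the companion paper \cite{FG1}. That said, your proposed argument has a genuine gap that would prevent it from working in any version.

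You write that the proportion of $g\in Sp(2n,q)$ fixing \emph{some} subspace of the given type is at most the expected number of such invariant subspaces, and then propose to bound that expectation via the cycle index. But $Sp(2n,q)$ acts transitively on nondegenerate $2k$-subspaces and on totally singular $k$-subspaces, and for any transitive action the average number of fixed points is exactly $1$ (Burnside). So the first-moment bound yields only the trivial estimate $\le 1$ and can never produce decay in $k$. Whatever Euler-product coefficient you extract for this expectation will simply equal $1$; no $q\to\infty$ limit or Weyl-group comparison can change that.

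The approach in \cite{FG1} does not pass through the first moment. One instead characterizes directly which conjugacy classes of $Sp(2n,q)$ meet a conjugate of the relevant stabilizer: an element fixes a nondegenerate $2k$-space exactly when its rational canonical form data admits a self-dual splitting into pieces of degrees $2k$ and $2(n-k)$, and similarly for the totally singular case. For regular semisimple elements this reduces (as you correctly identify) to a statement about the Weyl group $W(B_n)$, namely the \emph{proportion} of signed permutations admitting an invariant set of size $k$; but that is itself a probability, not an expectation, and bounding it is precisely the hard step requiring the Luczak--Pyber type analysis rather than a product-form generating function. The passage from the Weyl group to finite $q$, and the contribution of non-regular-semisimple elements, are then controlled separately using the class-number and centralizer-size estimates of Section~\ref{recall}. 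Your sketch correctly locates several of these ingredients, but the logical spine of the argument---replacing the first moment by a direct conjugacy-class estimate---is missing.
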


\begin{theorem} \label{Osub} For $1 \leq k \leq n/2$, the proportion of elements of $SO^{\pm}(n,q)$ which fix a nondegenerate $k$-space is at most $A/k^{.005}$ for $A$ a universal constant, and the proportion of elements of $SO^{\pm}(n,q)$ which fix a totally singular $k$-space is at most $A/k^{.25}$, for $A$ a universal constant.
\end{theorem}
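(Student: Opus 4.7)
The plan is to parameterize conjugacy classes of $SO^{\pm}(n,q)$ by the standard combinatorial data of partition-valued functions on monic self-reciprocal $\F_q$-irreducible polynomials, with extra form-type data attached at the exceptional polynomials $\phi(t) = t \pm 1$. In this parameterization, a $g$-invariant nondegenerate (respectively totally singular) $k$-subspace is equivalent to a refinement of the combinatorial datum: for each $\phi$ one splits the partition attached to $\phi$ into two sub-pieces whose total $\phi$-dimension sums to $k$, with the appropriate form-compatibility condition imposed at the exceptional polynomials.

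Next I would set up a cycle index generating function $Z(u, \dots)$ in which an auxiliary variable $u$ tracks the dimension of the chosen invariant subspace; the coefficient $[u^k] Z$ then equals the $|G|$-normalized count of pairs $(g, W)$ with $\dim W = k$, which dominates the proportion of elements fixing some such $W$. This cycle index factors as an infinite product over $\phi$ whose individual factors are $q$-analog symmetric function expressions entirely analogous to those appearing in the $GL$, $U$, and $Sp$ arguments of \cite{FG1}. Singularity analysis of the product, combined with the class-number bounds of Table 1 and the centralizer lower bounds of Theorem \ref{smallcent}, should then yield the $O(k^{-0.005})$ bound for nondegenerate $k$-subspaces. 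The weaker $O(k^{-0.25})$ bound for totally singular subspaces arises because each individual partition has many more isotropic sub-partitions than form-compatible orthogonal splittings, so the analogous product grows faster near its dominant singularity.

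The main obstacle will be the bookkeeping at $\phi = t \pm 1$ (and at $\phi = t-1$ in characteristic two, where one also has the Dickson/Arf data): in the orthogonal case a conjugacy class depends not only on a partition but on the isometry type of the form on each generalized eigenspace, and these types must match across the splitting that defines the invariant subspace. The challenge will be to sum over all compatible type assignments without losing the polynomial saving in $k$, and simultaneously to keep the constant $A$ uniform in $q$ — the latter is handled by comparing the $q$-analog factors to their $q \to \infty$ limits and invoking the Weyl-group input from Section \ref{sec:sym}.

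A secondary but genuine difficulty is that the cycle-index computation bounds an \emph{average} number of invariant subspaces, and this average need not be close to the probability that at least one such subspace exists. For the nondegenerate case the split between mean and tail is essentially painless because the form-compatibility condition forces the expected count to be small; for the totally singular case a single element can fix exponentially many invariant subspaces, so one has to group these subspaces by the isotropic sub-partition type they correspond to and apply a Markov-type bound on each class separately, which is the mechanism by which the exponent degrades from $.005$ to $.25$.
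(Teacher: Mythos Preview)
This theorem is not proved in the present paper: it appears in Section~\ref{recall} (Background) as a result \emph{recalled} from \cite{FG1}, with no argument given here. So there is no proof in this paper to compare your proposal against.

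That said, your sketch is broadly in the spirit of the methods used throughout the Fulman--Guralnick series: cycle-index generating functions indexed by self-conjugate irreducible polynomials, with partition-valued data and special care at $t\pm 1$, followed by coefficient extraction. As a high-level plan this is the right framework. But what you have written is an outline rather than a proof, and a few of the load-bearing steps are asserted without justification. In particular: (i) the claim that ``singularity analysis of the product'' yields the exponent $0.005$ hides the entire analytic content of the argument---in \cite{FG1} this exponent arises from a specific comparison with a symmetric-group quantity (the proportion of permutations fixing a $k$-set) and is not something one reads off directly from a pole; (ii) your explanation for why the totally singular exponent degrades to $0.25$ (``a single element can fix exponentially many invariant subspaces'') is not the actual mechanism---elements can also fix many nondegenerate subspaces, and the distinction in \cite{FG1} comes from a different generating-function factorization rather than a mean-versus-tail issue; (iii) the passage from the expected number of fixed $k$-spaces to the probability of fixing at least one is where the real work lies, and simply invoking ``a Markov-type bound on each class separately'' does not by itself produce a power saving in $k$.

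So the approach is not wrong, but the proposal as written does not yet contain the key estimates that make the argument go through; you would need to consult \cite{FG1} for those.
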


\section{Some results on Weyl  groups} \label{sec:sym}

We record some results about Weyl groups that will be used in Sections \ref{wreathprod} and  \ref{extensionfield}.

For $x \in S_k$, we define $\orb(x)$ as the number of orbits of $x$ and $\ind(x) = k - \orb(x)$.  Note
that $\ind(x)$ is the also the minimal number $d$ such that $x$ is a product of $d$ transpositions.

\begin{lemma} \label{lem:index}  If $x \in S_k$ satisfies $\ind(x) < k/2$, then $x$ fixes subsets of every size from $1$ to $k$.
\end{lemma}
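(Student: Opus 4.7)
A subset $S \subseteq \{1,\ldots,k\}$ is $x$-invariant if and only if $S$ is a union of cycles of $x$.  Let the cycle lengths of $x$, written in non-decreasing order, be $\ell_1 \leq \ell_2 \leq \cdots \leq \ell_m$, where $m = \orb(x)$ and $\sum \ell_i = k$.  The hypothesis $\ind(x) < k/2$ translates into $m > k/2$, i.e.\ the average cycle length is strictly less than $2$.  The lemma thus reduces to the purely arithmetic statement that every integer in $\{1,2,\ldots,k\}$ is a subset sum of the multiset $\{\ell_1,\ldots,\ell_m\}$.

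The plan is to invoke the following standard subset-sum fact, which I would state and prove in a single line: if $a_1 \leq a_2 \leq \cdots \leq a_m$ are positive integers with $a_i \leq 1 + \sum_{j<i} a_j$ for every $i$, then every integer in $[0,\sum_j a_j]$ is a subset sum of $\{a_1,\ldots,a_m\}$.  The proof is by induction on $m$: by induction $[0,S_{m-1}]$ is covered using $a_1,\ldots,a_{m-1}$ (with $S_{m-1} = a_1+\cdots+a_{m-1}$), and translating by $a_m$ covers $[a_m, S_m]$; since $a_m \leq 1 + S_{m-1}$, these two intervals overlap and together cover $[0, S_m]$.

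It remains to verify the hypothesis $\ell_i \leq 1 + S_{i-1}$ for the cycle lengths (with $S_0 = 0$).  For $i=1$ this says $\ell_1 = 1$, i.e.\ $x$ has a fixed point; otherwise every $\ell_j \geq 2$ and $k = \sum_j \ell_j \geq 2m > k$, a contradiction.  For $i \geq 2$, suppose for contradiction that $\ell_i \geq S_{i-1} + 2$.  Because the sequence is non-decreasing, $\ell_j \geq S_{i-1}+2$ for all $j \geq i$, and using the trivial bound $S_{i-1} \geq i-1$ gives
\[
k \;=\; S_{i-1} + \sum_{j=i}^{m} \ell_j \;\geq\; (i-1) + (m-i+1)(i+1) \;=\; m(i+1) - i(i-1).
\]
Since $m \geq i$, we have $m(i-1) \geq i(i-1)$, hence $k \geq 2m$, contradicting $m > k/2$.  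Thus the hypothesis of the subset-sum fact holds, and every value in $[0,k]$, in particular in $[1,k]$, arises as a subset sum of the cycle lengths.

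The argument is elementary throughout; the only real ``idea'' is recognizing the lemma as a subset-sum problem and noticing that the average cycle length condition gives just enough slack to verify the standard hypothesis, so I do not anticipate any genuine obstacle.
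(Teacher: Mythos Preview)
Your proof is correct. You recast the lemma as a subset-sum problem and verify the classical criterion $a_i \le 1+S_{i-1}$ for the sorted cycle lengths; the arithmetic in the verification step is sound, and the contradiction $k \ge 2m$ against $m>k/2$ is genuine.

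The paper argues differently: it inducts on $k$ by deleting the largest cycle of length $d$. The restricted permutation still satisfies the index hypothesis on $k-d$ points, so by induction all sizes up to $k-d$ are realized; complementation then handles sizes $\ge k/2$ whenever $d\le k/2$. The remaining case $d>k/2$ is pinned down exactly: the hypothesis forces $d=(k+1)/2$ with all other cycles trivial, and this configuration visibly gives every size. Your route is more systematic---once one knows the subset-sum criterion, the lemma becomes a mechanical check---while the paper's induction is shorter and more self-contained, requiring no external lemma. Both are elementary, and neither offers a real advantage over the other for this application.
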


\begin{proof}  If $k \le 2$, then $x =1$ and the result is clear.   We may assume that $x \ne 1$.  Let $d$ be the length
of the largest cycle of $x$.  By induction, $x$ fixes subsets of every size at most $k-d$.  If $d \le k/2$, then
the result follows (since it is enough to check subsets of size up to $k/2$).   If $d > k/2$, then it follows that $x$ is a $d$-cycle  with
$d =  (k+1)/2$ (in particular, $k$ is odd).  Thus, $x$ has $(k-1)/2$ fixed points and again the result is clear.
\end{proof}

To see that Lemma \ref{lem:index} is sharp, note that a fixed point free involution $x \in S_k$ fixes no subsets of odd size and also satisfies $\ind(x)=k/2$.

\begin{lemma} \label{binomialbound} For $0 < t <1 $, and $r \geq 1$, the coefficient of $u^r$ in $(1-u)^{-t}$ is at most $te^tr^{t-1}$. \end{lemma}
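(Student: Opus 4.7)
The plan is to identify the coefficient explicitly, rewrite it as a telescoping product, and then use the elementary inequalities $1+x \le e^x$ and $H_{r-1} \le 1 + \log r$ to get the claimed bound.

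First I would recall the generalized binomial series: the coefficient of $u^r$ in $(1-u)^{-t}$ equals
\[
\binom{-t}{r}(-1)^r \;=\; \binom{t+r-1}{r} \;=\; \frac{t(t+1)(t+2)\cdots(t+r-1)}{r!}.
\]
Pulling out the factor $t$ and pairing each numerator factor $t+k$ with the denominator factor $k$, this coefficient equals
\[
\frac{t}{r}\prod_{k=1}^{r-1}\left(1+\frac{t}{k}\right).
\]
So the inequality to prove reduces to
\[
\prod_{k=1}^{r-1}\left(1+\frac{t}{k}\right) \;\le\; e^{t}\, r^{t}.
\]
(For $r=1$ the left side is the empty product $1$ and the right side is $e^t \ge 1$, so the base case is immediate.)

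Next I would apply $1+x \le e^x$ termwise, which gives
\[
\prod_{k=1}^{r-1}\left(1+\frac{t}{k}\right) \;\le\; \exp\!\left(t \sum_{k=1}^{r-1}\frac{1}{k}\right) \;=\; \exp(t\, H_{r-1}).
\]
The standard bound $H_{r-1} \le 1 + \log(r-1) \le 1 + \log r$ (valid for $r \ge 2$, with $H_0=0$ giving the $r=1$ case for free) then yields
\[
\exp(t\, H_{r-1}) \;\le\; \exp\!\bigl(t(1+\log r)\bigr) \;=\; e^{t}\, r^{t},
\]
which finishes the argument.

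There is no real obstacle here: the only judgment call is choosing to use $1+x \le e^x$ rather than, say, trying to estimate the Gamma-function ratio $\Gamma(t+r)/(\Gamma(t+1)\Gamma(r))$ directly via Stirling. The elementary route is cleaner and gives exactly the constants $t e^t$ and the exponent $t-1$ demanded by the statement.
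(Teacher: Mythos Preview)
Your proof is correct and follows essentially the same route as the paper: both write the coefficient as $\frac{t}{r}\prod_{k=1}^{r-1}(1+t/k)$, bound the product via $\log(1+x)\le x$ (equivalently $1+x\le e^x$), and control the resulting harmonic sum by $1+\log r$. The only cosmetic difference is that you phrase the key inequality as $1+x\le e^x$ applied termwise, whereas the paper takes the logarithm first.
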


\begin{proof} This coefficient is equal to $\frac{t}{r} \prod_{i=1}^{r-1} (1+\frac{t}{i})$. Taking logarithms base e, one sees that
\begin{eqnarray*}
\log \left[\prod_{i=1}^{r-1} \left( 1+\frac{t}{i} \right) \right] & = & \sum_{i=1}^{r-1} \log \left( 1+\frac{t}{i} \right)\\
& \leq & \sum_{i=1}^{r-1} \frac{t}{i}\\
& \leq & t (1+ \log(r-1)).
\end{eqnarray*} Taking exponentials one sees that the sought proportion is at most $te^tr^{t-1}$.
\end{proof}

\begin{lemma} \label{lem:bcycle}
For $b|n$, the proportion of elements in $S_n$ all of whose cycles have length
divisible by $b$ is at most $1.2/n^{1-1/b}$.
\end{lemma}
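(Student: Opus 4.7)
The plan is to use a generating function for permutations whose cycle lengths all lie in a prescribed set, and then apply Lemma \ref{binomialbound} to extract the coefficient.

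First, I would recall the standard exponential generating function identity
\[
\sum_{n \ge 0} \frac{a_n(b)}{n!}\, u^n \;=\; \exp\!\left( \sum_{k \ge 1} \frac{u^{bk}}{bk} \right) \;=\; (1-u^b)^{-1/b},
\]
where $a_n(b)$ is the number of permutations in $S_n$ all of whose cycles have length divisible by $b$. Thus the proportion we want is exactly the coefficient of $u^n$ in $(1-u^b)^{-1/b}$. Since $b \mid n$, writing $n = bm$ and substituting $v = u^b$ turns this into the coefficient of $v^m$ in $(1-v)^{-1/b}$.

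Next, I would handle the trivial case $b = 1$ separately: the proportion is at most $1 < 1.2$, so the bound $1.2/n^{1-1/b} = 1.2$ holds. For $b \ge 2$, apply Lemma \ref{binomialbound} with $t = 1/b \in (0,1)$ and $r = m = n/b$, giving
\[
[v^m](1-v)^{-1/b} \;\le\; \frac{1}{b}\, e^{1/b}\, m^{1/b-1} \;=\; \frac{e^{1/b}}{b}\,(n/b)^{1/b-1} \;=\; \frac{(e/b)^{1/b}}{n^{1-1/b}}.
\]

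The only remaining point is to check that the constant $(e/b)^{1/b}$ is at most $1.2$ for every integer $b \ge 2$. I would observe that $\log((e/b)^{1/b}) = (1-\log b)/b$, whose derivative in $b$ is $(\log b - 2)/b^2$; thus the function is maximized on $\{2,3,4,\dots\}$ at $b=2$, yielding $(e/2)^{1/2} = \sqrt{e/2} < 1.166 < 1.2$. This gives the claimed bound uniformly in $b$. There is no real obstacle here; the only thing one must be careful about is the $b=1$ boundary case, where the binomial lemma does not apply (it requires $t<1$) and must be treated by the trivial observation that a proportion is at most $1$.
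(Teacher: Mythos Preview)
Your proof is correct and follows essentially the same approach as the paper: both identify the proportion as the coefficient of $u^n$ in $(1-u^b)^{-1/b}$ via the cycle index, substitute to reduce to $(1-v)^{-1/b}$, and apply Lemma~\ref{binomialbound} with $t=1/b$, $r=n/b$ to obtain $e^{1/b}/(b^{1/b}n^{1-1/b}) = (e/b)^{1/b}/n^{1-1/b} \le 1.2/n^{1-1/b}$. You add a bit more care than the paper by treating the $b=1$ edge case separately (where Lemma~\ref{binomialbound} does not apply) and by explicitly verifying that $(e/b)^{1/b}$ is maximized at $b=2$.
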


\begin{proof} By the cycle index of the symmetric groups (reviewed in the prequel \cite{FG1}), the sought proportion is the
 coefficient of $u^n$ in \[ \prod_{i \geq 1} e^{\frac{u^{ib}}{ib}} =
 (1-u^b)^{-1/b}.\] Now apply Lemma \ref{binomialbound} with $r=n/b$
 and $t=1/b$ to get an upper bound of
 \[ \frac{e^{1/b}}{b^{1/b} n^{1-1/b}} \leq 1.2/n^{1-1/b} .\]
\end{proof}

\begin{cor} \label{allcycle} The proportion of elements in $S_n$ with all cycle lengths divisible by some prime $b$ is at most $A/n^{1/2}$
for some universal constant $A$. \end{cor}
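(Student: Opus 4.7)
The plan is to apply a union bound over primes, using Lemma \ref{lem:bcycle} to estimate each term and splitting off the $b=2$ contribution, which turns out to be the dominant one.

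First I would observe that if $x \in S_n$ has every cycle length divisible by a prime $b$, then $b \mid n$ (the cycle lengths sum to $n$), so only primes dividing $n$ contribute. By the union bound, the proportion in question is at most
\[
\sum_{\substack{b \text{ prime} \\ b \mid n}} \frac{1.2}{n^{1-1/b}},
\]
where each summand is bounded via Lemma \ref{lem:bcycle}.

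Next I would separate the term $b=2$, which contributes at most $1.2/n^{1/2}$, from the contribution of odd primes. For any odd prime $b$ dividing $n$ we have $1-1/b \geq 2/3$, so each such term is at most $1.2/n^{2/3}$. Since every prime divisor of $n$ is at least $2$, the number of distinct prime divisors of $n$ is at most $\log_2 n$, so the total odd-prime contribution is at most $1.2 (\log_2 n)/n^{2/3}$. Since $(\log_2 n)/n^{1/6}$ is bounded by a universal constant for all $n \geq 1$, this is $O(1/n^{1/2})$.

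Combining, the total is bounded by $A/n^{1/2}$ for a universal constant $A$, as claimed. There is no real obstacle here beyond bookkeeping; the one point to handle carefully is confirming that the contribution from odd primes, even after multiplying by the crude bound $\log_2 n$ on the number of prime divisors, remains dominated by the $b=2$ term.
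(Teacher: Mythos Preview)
Your proposal is correct and follows essentially the same route as the paper: a union bound over prime divisors of $n$ via Lemma~\ref{lem:bcycle}, with the $b=2$ term singled out as the dominant $n^{-1/2}$ contribution and the remaining odd primes controlled by combining the bound $1-1/b \ge 2/3$ with the crude estimate of at most $\log_2 n$ prime divisors. The paper's write-up just packages this as the single inequality $1.2\,n^{-1/2}\bigl(1 + (\log_2 n)/n^{1/6}\bigr)$, which is exactly your decomposition.
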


\begin{proof} It follows from Lemma \ref{lem:bcycle} that the sought proportion is at most
\[ 1.2 \sum_{b|n} \frac{1}{n^{1-1/b}}, \] where the sum is over prime divisors $b$ of $n$. Since $n$ has at most $\log_2(n)$ distinct
prime factors, this is at most
\[ \frac{1.2}{n^{1/2}} \left[ 1 + \frac{\log_2(n)}{n^{1/6}} \right], \] and the result follows. \end{proof}

We conclude this section with a result about signed permutations.

\begin{lemma}  \label{lem:type B Weyl}   Let $W = B_n$ be the Weyl group of type B.  The proportion of elements in $W$ such that all odd cycles
have a given type and all even cycles have a given type is at most $\frac{1}{\sqrt{\pi n}}$.
\end{lemma}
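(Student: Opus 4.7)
The plan is to read off the proportion from the cycle index of $B_n$. An element of the hyperoctahedral group $W = B_n$ decomposes into signed cycles: each cycle of length $i$ carries a sign in $\{+,-\}$, so a "type" for a cycle is its sign. With $a_i$ marking positive $i$-cycles and $b_i$ marking negative $i$-cycles, the standard cycle index generating function reads
\[
\sum_{n \geq 0} \frac{u^n}{2^n n!} \sum_{w \in B_n} \prod_i a_i^{c_i(w)} b_i^{d_i(w)} \;=\; \prod_{i \geq 1} \exp\!\left(\frac{(a_i+b_i)\,u^i}{2i}\right),
\]
as can be verified by noting that setting $a_i = b_i = 1$ recovers $1/(1-u)$, matching $|B_n|/(2^n n!) = 1$.

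The condition that all odd cycles have a prescribed type and all even cycles have a prescribed type corresponds, in all four of the possible configurations, to setting one of $a_i, b_i$ to $1$ and the other to $0$ according to the parity of $i$. In each case the factor $(a_i + b_i)$ is replaced by $1$, and the generating function collapses to
\[
\prod_{i \geq 1} \exp\!\left(\frac{u^i}{2i}\right) \;=\; \exp\!\left(-\tfrac{1}{2}\log(1-u)\right) \;=\; (1-u)^{-1/2}.
\]

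Thus the proportion in question is the coefficient of $u^n$ in $(1-u)^{-1/2}$, which equals $\binom{2n}{n}/4^n$. A standard Wallis/Stirling estimate (or an immediate application of Lemma \ref{binomialbound} with $t = 1/2$ giving $\tfrac{1}{2}e^{1/2} n^{-1/2}$, then sharpened) yields the bound $\binom{2n}{n}/4^n \leq \tfrac{1}{\sqrt{\pi n}}$, completing the proof.

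The only subtlety is the bookkeeping for the four sign configurations, but since each of them forces $a_i + b_i \mapsto 1$ for every $i$, the generating function is the same in all cases, so a single bound suffices. There is no genuine obstacle here; the lemma is essentially a one-line consequence of the $B_n$ cycle index once the meaning of "type" is unpacked.
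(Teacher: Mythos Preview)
Your proof is correct and follows essentially the same approach as the paper: both apply the cycle index of $B_n$, specialize the sign variables to obtain $(1-u)^{-1/2}$, and then bound the coefficient of $u^n$ by $1/\sqrt{\pi n}$. Your version is in fact slightly more explicit, identifying the coefficient as $\binom{2n}{n}/4^n$ and invoking Wallis/Stirling directly, whereas the paper simply cites \cite{FG1} for the final inequality.
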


\begin{proof} We prove the lemma in the case that all even cycles have positive type, and all odd cycles have negative type; the other
cases are similar. As in \cite{FG1}, we apply the cycle index of the groups $B_n$. For a signed permutation $\pi$, let $n_i(\pi)$ denote the
number of positive $i$-cycles of $\pi$, and let $m_i(\pi)$ denote the number of negative $i$-cycles of $\pi$. The cycle index states that
\[ 1 + \sum_{n \geq 1} \frac{u^n}{2^n n!} \sum_{\pi \in B_n} \prod_i x_i^{n_i(\pi)} y_i^{m_i(\pi)} = \prod_{i \geq 1} e^{u^i(x_i+y_i)/(2i)}.\]
Setting \[ x_1=x_3=x_5 = \cdots = 0, \ \ y_2=y_4=y_6= \cdots = 0, \]
\[ x_2=x_4=x_6= \cdots = 1, \ \ y_1=y_3=y_5= \cdots = 1, \] gives that the proportion of elements of $W$ with all even cycles positive and
all odd cycles negative is the coefficient of $u^n$ in \[ \prod_{i \geq 1} e^{u^i/(2i)} = (1-u)^{-1/2} .\] Arguing as in \cite{FG1} shows
that this proportion is at most $\frac{1}{\sqrt{\pi n}}$.
\end{proof}

\section{Stabilizers of imprimitive subgroups} \label{wreathprod}

	This section proves a strengthening of the Boston-Shalev conjecture for wreath products when the
rank $n \rightarrow \infty$.

Subsection \ref{wreathprel} develops some preliminaries for the wreath product case. Subsections \ref{wreGL},
\ref{wreGU}, \ref{wreSp}, and \ref{wreO} treat wreath products for the general linear, unitary, symplectic,
and orthogonal groups respectively. Subsection \ref{isotropic} considers stabilizers for pairs of totally
isotropic subspaces.

\subsection{Preliminaries for wreath products} \label{wreathprel}

	Let $p(n)$ denote the number of partitions of an integer $n$; this is equal to the number of conjugacy classes of $S_n$. The series $p(n)$ begins with 1,2,3,5,7,11,15,22. Lemma \ref{Wallbound}, proven in the textbook \cite{VW}, gives a useful upper bound on $p(n)$.

\begin{lemma} \label{Wallbound} Let $p(n)$ be the number of partitions of $n$. For $n \geq 2$,
\[ p(n) \leq \frac{\pi}{\sqrt{6(n-1)}} e^{\pi \sqrt{2n/3}} .\]
\end{lemma}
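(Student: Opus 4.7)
The plan is to use the classical generating function
\[
f(x) \;:=\; \sum_{n \geq 0} p(n)\, x^n \;=\; \prod_{k \geq 1} \frac{1}{1 - x^k}.
\]
Since all coefficients are nonnegative, $p(n) x^n \leq f(x)$, so $p(n) \leq f(x)/x^n$ for every $0 < x < 1$. The problem then reduces to bounding $\log f(x)$ and then optimizing the choice of $x$.

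First I would estimate $\log f(x)$ for $x$ near $1$. Expanding each factor as $-\log(1-x^k) = \sum_{j \geq 1} x^{kj}/j$ and interchanging the double sum yields
\[
\log f(x) \;=\; \sum_{j \geq 1} \frac{1}{j}\cdot \frac{x^j}{1-x^j}.
\]
Applying AM--GM to $1,x,\ldots,x^{j-1}$ gives $1+x+\cdots+x^{j-1} \geq j\, x^{(j-1)/2}$, so $1-x^j \geq j(1-x)\,x^{(j-1)/2}$. Substituting, and using $x^{(j+1)/2} \leq 1$, produces the clean bound
\[
\log f(x) \;\leq\; \frac{1}{1-x}\sum_{j \geq 1}\frac{1}{j^2} \;=\; \frac{\pi^2}{6(1-x)}.
\]

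Next I would optimize $\frac{\pi^2}{6(1-x)} - n\log x$ over $x \in (0,1)$. Writing $s = 1-x$, the leading-order balance $\frac{\pi^2}{6s} \approx ns$ is achieved at $s = \pi/\sqrt{6n}$, and plugging in gives $\log p(n) \leq \pi\sqrt{2n/3} + O(1)$. This already recovers the correct exponent in the target inequality.

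The main obstacle will be pinning down the polynomial prefactor $\pi/\sqrt{6(n-1)}$; the crude bound $\log f(x) \leq \pi^2/(6(1-x))$ only gives a constant prefactor. To sharpen it I would refine the estimate of $\log f(x)$ near $x=1$ by applying Euler--Maclaurin to the sum $\sum_j j^{-1} x^j/(1-x^j)$, obtaining the improved asymptotic
\[
\log f(x) \;\leq\; \frac{\pi^2}{6(1-x)} \;+\; \tfrac{1}{2}\log(1-x) \;+\; O(1).
\]
The extra $\tfrac{1}{2}\log(1-x)$ term contributes exactly the $1/\sqrt{n-1}$ factor once one substitutes the near-optimal $x$, and a careful expansion of $-n\log x = ns + ns^2/2 + \cdots$ around $s = \pi/\sqrt{6n}$ (shifted slightly to absorb the $-1$ in $n-1$) will reconcile the constants. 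Alternatively, one can avoid the refined asymptotic entirely by an inductive argument: using the trivial identity $p(n) = p(n-1) + (\text{partitions of } n \text{ with no part equal to } 1)$, one reduces the prefactor question to an estimate of the generating function for partitions without $1$'s, which decays by a factor of $(1-x)$ and is what produces the $\sqrt{n-1}$ denominator after optimization.
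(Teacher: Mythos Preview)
The paper does not prove this lemma at all; it simply cites the textbook of Van Lint and Wilson, so there is no in-paper argument to compare against. Your outline is essentially the standard textbook proof, and the generating-function set-up, the AM--GM step, and the leading-order optimization are all correct.

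Where your sketch falls short is in the prefactor, and your proposed remedies (Euler--Maclaurin, or the recursion $p(n)=p(n-1)+q(n)$) are heavier than what is actually needed. Two small sharpenings of steps you already have give the exact constant with no extra machinery:

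\begin{enumerate}
\item In your AM--GM estimate you obtained $\dfrac{x^j}{1-x^j}\le \dfrac{x^{(j+1)/2}}{j(1-x)}$ and then used $x^{(j+1)/2}\le 1$. Use instead $x^{(j+1)/2}\le x$ (valid since $(j+1)/2\ge 1$), which yields the slightly stronger
\[
\log f(x)\ \le\ \frac{\pi^2}{6}\,\frac{x}{1-x}.
\]
Your weaker bound $\pi^2/\bigl(6(1-x)\bigr)$ differs from this by the additive constant $\pi^2/6$, which would leave an unwanted factor $e^{\pi^2/6}\approx 5.18$ in the end.
\item Rather than $p(n)x^n\le f(x)$, use monotonicity of $p$: since $p(m)\ge p(n)$ for $m\ge n$,
\[
f(x)\ \ge\ \sum_{m\ge n}p(m)x^m\ \ge\ p(n)\,\frac{x^n}{1-x},
\qquad\text{so}\qquad
p(n)\ \le\ \frac{(1-x)f(x)}{x^n}.
\]
This is the clean way to produce the missing $\log(1-x)$ term; it is the same information as your $q(n)$ identity, but packaged more directly.
\end{enumerate}

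With these two tweaks, set $t=x/(1-x)$. A short manipulation gives
\[
\log p(n)\ \le\ (n-1)\log(1+t)-n\log t+\frac{\pi^2 t}{6}
\ \le\ -\log t+\frac{n-1}{t}+\frac{\pi^2 t}{6},
\]
using $\log(1+t)\le \log t+1/t$. Choosing $t=\sqrt{6(n-1)}/\pi$ makes the last two terms equal to $\pi\sqrt{2(n-1)/3}\le\pi\sqrt{2n/3}$ and $-\log t=\log\bigl(\pi/\sqrt{6(n-1)}\bigr)$, yielding exactly the stated inequality for $n\ge 2$. No Euler--Maclaurin is required.
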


{\it Remark:} Hardy and Ramanujan \cite{HR} give an elementary proof that $p(n) < \frac{K}{n} e^{2 \sqrt{2n}}$ for a universal constant $K$. In truth $p(n)$ is asymptotic to $\frac{1}{4n \sqrt{3}} e^{\pi \sqrt{2n/3}}$, as discussed in \cite{A}.

	Next we recall a description (proved in \cite {JK}) of conjugacy classes of the wreath product $G \wr S_k$, where $G$ is any finite group. The classes are parameterized by matrices where

\begin{enumerate}
\item The rows are indexed by conjugacy classes of $G$.
\item The number of columns is $k$.
\item Letting $a_{h,i}$ denote the $(h,i)$ entry of the matrix, the $a_{h,i}$ are nonnegative integers satisfying $\sum_{h,i \geq 1} i \cdot a_{h,i}=k$.
\end{enumerate}

	To determine the data corresponding to an element $(g_1,\cdots,g_k;\pi)$ in the wreath product, to each i-cycle of $\pi$ one associates a conjugacy class $C$ of $G$ by multiplying (in the order indicated by the cycle of $\pi$) the $g$'s whose subscripts form the entries of the cycle of $\pi$ and letting $C$ be the conjugacy class in $G$ of the resulting product. This contributes 1 to the entry of the matrix whose row entry is indexed by $C$ and whose column number is $i$.

\begin{lemma} \label{weylbcest} Suppose that $n > 1$.
\begin{enumerate}
\item  Let $W(B_n)$ be the Weyl group of type $B_n$.  Then
\[ k(W(B_n)) \le  \frac{(n+1)  \pi^2} {6(n-1)} e^{2\pi \sqrt{2n/3}}.\]
\item  Let $W(D_n)$ be the Weyl group of type $D_n$.  Then
\[ k(W(D_n)) \le  \frac{(n+1)  \pi^2} {3(n-1)} e^{2\pi \sqrt{2n/3}} .\]
\end{enumerate}
\end{lemma}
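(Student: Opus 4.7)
The plan is to pass through the explicit conjugacy class count for the hyperoctahedral group. Since $W(B_n) \cong (\mathbb{Z}/2) \wr S_n$, the wreath product parametrization of conjugacy classes recalled in Subsection~\ref{wreathprel} (with $G = \mathbb{Z}/2$) says that classes correspond to pairs of partitions $(\lambda,\mu)$ with $|\lambda|+|\mu|=n$, where $\lambda$ records the cycles of positive type and $\mu$ the cycles of negative type. Thus
\[
k(W(B_n)) \;=\; \sum_{j=0}^{n} p(j)\,p(n-j).
\]

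For (1) the key observation is the trivial monotonicity $p(j) \le p(n)$ for $0 \le j \le n$ (obtained by appending $n-j$ parts equal to $1$ to any partition of $j$). This immediately yields
\[
k(W(B_n)) \;\le\; (n+1)\,p(n)^2,
\]
and plugging in the Hardy--Ramanujan/Wall bound from Lemma~\ref{Wallbound} (valid because $n \ge 2$) gives
\[
k(W(B_n)) \;\le\; (n+1) \left(\frac{\pi}{\sqrt{6(n-1)}}\right)^{\!2} e^{2\pi\sqrt{2n/3}}
\;=\; \frac{(n+1)\pi^2}{6(n-1)}\, e^{2\pi\sqrt{2n/3}},
\]
which is precisely the claimed bound. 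This is a fairly loose estimate (one can do much better by exploiting that the exponential factor is maximized near $j=n/2$ and applying Cauchy--Schwarz to $\sqrt{j}+\sqrt{n-j}$), but it is already clean enough for the intended applications and matches the target.

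For (2) I would use that $W(D_n)$ is a normal subgroup of $W(B_n)$ of index $2$. A standard Clifford-theoretic count (equivalently, the orbit-counting argument: each $W(B_n)$-class meeting $W(D_n)$ either remains a single $W(D_n)$-class or splits into exactly two) gives $k(W(D_n)) \le 2\,k(W(B_n))$, which doubles the constant in (1) and produces the claimed bound. The only work is verifying the two elementary ingredients (the wreath product class count and the index-$2$ splitting estimate); no serious obstacle arises.
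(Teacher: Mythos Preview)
Your proof is correct and follows essentially the same route as the paper: the paper also parametrizes $W(B_n)$-classes by pairs of partitions to get $k(W(B_n)) \le (n+1)p(n)^2$, applies Lemma~\ref{Wallbound}, and then deduces (2) from (1) via the index-$2$ inclusion $W(D_n) \lhd W(B_n)$.
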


\begin{proof}   Since $[W(B_n):W(D_n)]=2$,  (2) follows from (1).

By the above description of conjugacy classes of wreath products, the conjugacy classes of $W(B_n)$ are indexed by
pairs of partitions $(\alpha, \beta)$ where $\alpha$ is a partition of $a$ and $\beta$ is a partition of $n-a$ for
$0 \le a \le n$.   It follows that $k(W(B_n)) \le (n+1) k(S_n)^2$ and so by Lemma \ref{Wallbound},
$$
k(W(B_n)) \le \frac{(n+1)  \pi^2} {6(n-1)} e^{2\pi \sqrt{2n/3}}.
$$
\end{proof}

Next we proceed to the main results of this section.

\subsection{GL(n,q)} \label{wreGL}

	To begin we consider cosets of $SL(n,q)$ in $GL(n,q)$, with $q$ fixed and $n \rightarrow \infty$. We note that Theorem \ref{Gl} was proved by Shalev \cite{Sh}
	for $q$ fixed by using deep work of Schmutz \cite{Sc} on the order of a random matrix. Our method is more elementary and extends to the other finite classical groups.
	
	\begin{theorem} \label{Gl}  There exist positive constants $B$ and $\delta$ so that the following holds (independently of
	$n, q, m$).   Let $n = mk$ and let
	$G=GL(n,q)$.  Set $H= GL(m,q) \wr S_k$.  The proportion of elements of $G$ in a coset of $SL(n,q)$ which are conjugate to an element
	of $H$ is less than $B/n^{\delta}$.
	\end{theorem}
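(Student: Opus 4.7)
The plan is to project $h = (h_1, \ldots, h_k; \pi) \in H = GL(m,q) \wr S_k$ onto its $S_k$-component $\pi$ and argue by cases on the cycle structure of $\pi$. The key observation is that if $\pi$ stabilizes a subset $S \subseteq \{1, \ldots, k\}$ then $h$ (and every $G$-conjugate of $h$) preserves the $m|S|$-dimensional subspace $\bigoplus_{i \in S} V_i$; by passing to the complement we may assume $|S| \leq k/2$, and Theorem \ref{GLsub} then bounds the proportion of $g$ in any coset of $SL(n,q)$ fixing such a subspace by $A/(m|S|)^{.005}$.

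There are two main cases. If $\ind(\pi) < k/2$, Lemma \ref{lem:index} provides fixed subsets of every size from $1$ to $k$; taking $|S| = \lfloor k/2 \rfloor$ produces $m|S| \geq (n-m)/2 \geq n/4$ (for $k \geq 2$), yielding an $O(n^{-.005})$ bound via Theorem \ref{GLsub}. Otherwise $\ind(\pi) \geq k/2$, so $\pi$ has at most $k/2$ orbits; a greedy selection of orbits shows that whenever the longest cycle has length $b_1 \leq 3k/4$, one can find a union of orbits with total length in $[k/4, 3k/4]$, giving a $g$-invariant subspace of dimension in $[n/4, 3n/4]$, and Theorem \ref{GLsub} again delivers an $O(n^{-.005})$ bound after complementing if necessary.

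The residual case is when $\pi$ has a single dominant cycle of length $b > 3k/4$. Here $g$ has an invariant subspace of dimension $mb$ on which it acts with characteristic polynomial of the form $f(x^b)$ for some monic $f \in \F_q[x]$ of degree $m$, as one computes directly from the block-matrix form of an element of $GL(m,q) \wr C_b$. There are at most $q^m$ such polynomials (compared to roughly $q^n$ arbitrary characteristic polynomials of degree $n$), and combining this factor-$q^{-m(b-1)}$ savings with the centralizer bound $|C_G(g)| \geq q^n/(A(1+\log_q n))$ from Theorem \ref{smallcent} and the elementary fact that a given characteristic polynomial supports at most $p(n)$ rational canonical forms, the total proportion here is at most $q^m \cdot p(n) \cdot A(1+\log_q n)/q^n$, which decays exponentially in $n-m \geq n/2$ and is much smaller than $B/n^\delta$.

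The main technical obstacle is the residual case: estimating the number of $G$-conjugacy classes with characteristic polynomial of the restricted form $f(x^b)$ uniformly in $q$, $m$, and $k$. The combinatorial factor $p(n)$ (Lemma \ref{Wallbound}) is subexponential, so it is absorbed by the exponential savings $q^{-(n-m)}$, but keeping the arithmetic uniform over the entire regime $1 < k \leq n$, $b > 3k/4$ requires care. The coset restriction $g \in g_0 \cdot SL(n,q)$ is compatible with the whole argument, since the determinant of a wreath product element factors through the $\det(h_i)$ and the sign of $\pi$, so each $SL(n,q)$-coset receives a proportional share of the classes under consideration.
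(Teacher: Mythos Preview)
Your case $\ind(\pi) < k/2$ matches the paper exactly, but the ``residual case'' contains a genuine error and the intermediate ``greedy'' subcase is unnecessary.

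The error is the claim that a given characteristic polynomial supports at most $p(n)$ rational canonical forms. If the characteristic polynomial factors as $\prod_i p_i^{e_i}$ with the $p_i$ distinct irreducibles, then the number of $GL(n,q)$-classes with that characteristic polynomial is $\prod_i p(e_i)$, and this can be much larger than $p(n)$: for instance, with $q$ large and $p(x)=\prod_{i=1}^{n/2}(x-a_i)^2$ one gets $2^{n/2}$ classes, which exceeds $p(n)\sim e^{\pi\sqrt{2n/3}}$ for large $n$. A second (smaller) gap is that your count $q^m$ of characteristic polynomials only accounts for the $mb$-dimensional block and ignores the action on the complementary $(n-mb)$-dimensional subspace; the full characteristic polynomial of $g$ is $f(x^b)$ times an arbitrary degree-$(n-mb)$ factor. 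Finally, your last sentence about cosets (``each $SL(n,q)$-coset receives a proportional share'') is not justified and is not obviously true.

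The clean fix is the paper's: for \emph{every} $x\in S_k$ with $\ind(x)\ge k/2$ (no further subdivision), use the description of conjugacy classes in a wreath product to see that the number of $H_0$-orbits on the coset $xH_0$ is $k(GL(m,q))^{\orb(x)}\le q^{m\cdot k/2}=q^{n/2}$. Summing over the at most $p(k)$ relevant $S_k$-classes and applying the centralizer bound gives a proportion at most $p(k)\,A(1+\log_q n)/q^{n/2}$; multiplying by $q-1$ handles the passage to a fixed $SL(n,q)$-coset. This replaces both your greedy subcase and your residual subcase with a single estimate, and avoids the characteristic-polynomial bookkeeping entirely.
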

	
	Recall \cite[Theorem 315]{HW} that $d(n)$, the number of divisors of $n$, decays faster than
         any power of $n$.  Thus, the conclusion of the theorem holds allowing all possible $m,k$.

         \begin{proof}  Let $H_0 = GL(m,q) \times \ldots \times GL(m,q)$, where there are $k$ copies of $GL(m,q)$.  Consider cosets of the form $xH_0$ where $x \in S_k$ and $\ind(x) \ge k/2$.
         Recall that $k(GL(m,q)) \le q^m$.
         By the description of conjugacy classes in a wreath product in the previous subsection, the number of $H_0$ orbits on the coset $xH_0$ is $k(GL(m,q))^{\orb(x)} \le q^{n/2}$.
         Thus the total number of $G$-conjugacy classes intersecting $xH_0$ for any such $x$ is at most
         \[ p(k) q^{n/2} \leq
          \frac{\pi}{\sqrt{6(k-1)}} e^{\pi \sqrt{2k/3}} q^{n/2}.
         \] Here $p(k)$ denotes the number of partitions of $k$ and we have used Lemma \ref{Wallbound}.

         Using the estimate for the smallest centralizer size (Lemma \ref{smallcent}) gives that the proportion of elements of $G$ conjugate to some such
         element $xH_0$ is at most (for a universal constant $A$)

         \[
          \frac{\pi}{\sqrt{6(k-1)}} e^{\pi \sqrt{2k/3}}A (1 + \log_q n)/ q^{n/2} < B/n^{\delta}.
         \] Even after multiplying by $q-1$, we still have the same estimate giving the result for each coset.

         Now consider cosets $xH_0$ where $\ind(x) < k/2$.  By Lemma \ref{lem:index}  $x$ fixes subsets of every possible size and so any element
         in $xH_0$ fixes a subspace of dimension $n/2$ (for $k$ even) and $(k-1)m/2$ (for $k$ odd).  In particular, every such element fixes a $d$
         dimensional space for some fixed $d \ge n/4$.
         By  Theorem \ref{GLsub}, it follows that the proportion of elements in a given coset of $SL(n,q)$
         which fix a subspace of that dimension is at most $C/(n/4)^{.005}$ and the result follows.
                  \end{proof}

\subsection{U(n,q)} \label{wreGU}

	Using the same method as in the general linear case, one establishes analogous results for the unitary groups.  The proof here requires
 a little extra effort since we only have the bound $k(U(m,q)) < 8.3 q^m$.
	
	\begin{theorem} \label{ShalevU}  There exist positive constants $B$ and $\delta$ so that the following holds (independently of
	$n, q, m$).   Let $n = mk$ and let
	$G=U(n,q)$.  Set $H= U(m,q) \wr S_k$.  The proportion of elements of $G$ in a coset of $SU(n,q)$ which are conjugate to an element
	of $H$ is less than $B/n^{\delta}$.   The same is true allowing all possible $m$.
	\end{theorem}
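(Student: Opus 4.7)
The plan is to adapt the argument of Theorem \ref{Gl} to the unitary setting, substituting each input by its unitary analog. Write $H_0 = U(m,q)^k \trianglelefteq H$ and decompose $H$ into cosets $xH_0$ with $x \in S_k$; as in the GL case, I would split the analysis by whether $\ind(x) \geq k/2$ or $\ind(x) < k/2$.

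In the case $\ind(x) \geq k/2$ (so $\orb(x) \leq k/2$), the wreath-product description of conjugacy classes from Section \ref{wreathprel} shows that $xH_0$ contains at most $k(U(m,q))^{\orb(x)} \leq (8.26\,q^m)^{k/2}$ distinct $H_0$-orbits. Summing over the at most $p(k)$ cycle types of $x$ (Lemma \ref{Wallbound}) and applying the centralizer bound of Theorem \ref{smallcent}(3), the proportion of $G = U(n,q)$ covered by the resulting $G$-conjugacy classes is at most
\[ A_1 \cdot p(k) \cdot (1 + \log_q n) \cdot (8.26/q^m)^{k/2}, \]
and multiplying by $q+1$ gives the corresponding bound on any fixed coset of $SU(n,q)$.

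In the case $\ind(x) < k/2$, Lemma \ref{lem:index} provides a subset $S \subseteq \{1,\ldots,k\}$ of size $\lfloor k/2 \rfloor$ fixed by $x$. Every element of $xH_0$ then stabilizes the subspace $\bigoplus_{i \in S} V_i$, which is nondegenerate of dimension at least $n/4$, because the wreath product preserves an orthogonal decomposition of $V$ into nondegenerate $m$-subspaces. Theorem \ref{Usub} bounds the proportion of any coset of $SU(n,q)$ stabilizing such a nondegenerate subspace by $A/(n/4)^{.005}$, which is $O(n^{-\delta})$.

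The key obstacle, not present in the GL case, is the factor $8.26^{k/2}$ appearing in the first case. When $q^m \geq 17$, $(8.26/q^m)^{k/2} \leq 2^{-k/2}$ decays exponentially in $k$ and absorbs both $p(k)$ and the logarithmic factor, yielding the required $B/n^{\delta}$ estimate. For the finitely many residual pairs $(q,m)$ with $q^m \leq 16$, I would replace the generic estimate $k(U(m,q)) \leq 8.26\,q^m$ by the actual (smaller) class counts, which satisfy $k(U(m,q)) < q^{2m}$; this makes $\bigl(k(U(m,q))/q^{2m}\bigr)^{k/2}$ decay exponentially in $k$ and hence the first-case bound is again $O(n^{-\delta})$. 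Any remaining configurations with both $m$ and $k$ bounded amount to finitely many $n$, and are absorbed into the constant $B$.
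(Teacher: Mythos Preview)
Your proposal is correct and follows essentially the same two–case split as the paper: cosets $xH_0$ with $\ind(x)\ge k/2$ handled via class counting plus the centralizer bound, and cosets with $\ind(x)<k/2$ reduced to fixing a large nondegenerate subspace via Lemma~\ref{lem:index} and Theorem~\ref{Usub}. The only substantive difference is how the troublesome factor $8.26^{k/2}$ is disposed of for small $q^m$. The paper uses the threshold $q^m>8.3$ and then treats the three exceptional situations $m=1$, $(m,q)=(2,2)$, $(m,q)=(3,2)$ individually by feeding in the sharper inequalities $k(U(1,q))\le (3/2)q$, $k(U(3,2))=24<3^3$, etc., so that one still has a bound of the shape $(c/q^m)^{k/2}$ with $c<q$. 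Your alternative---taking the threshold $q^m\ge 17$ and, for the finitely many residual pairs, invoking $k(U(m,q))<q^{2m}$ to obtain $(k(U(m,q))/q^{2m})^{k/2}$---is also valid and arguably tidier, since it avoids case-by-case constants; you should, however, note explicitly that in this residual regime $q\le 16$, so the extra factor $q+1$ is bounded and does not interfere with the exponential decay in $k$ (and hence in $n$, since $m$ is bounded there). With that remark added, your argument is complete and equivalent in strength to the paper's.
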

	
	\begin{proof} As for $GL$, the second statement follows from the first by \cite{HW}.    Set
	$H_0 = U(m,q) \times \ldots \times U(m,q)$. The proof is precisely along the lines of the $GL$ case.  The only
	difference is that we have to use the estimate $k(U(m,q)) < 8.3 q^m$.   We first consider cosets $xH_0$ where $x \in S_k$
	and $\ind(x) \ge k/2$.  The argument then gives the estimate that the proportion of elements of $G$ conjugate to an element in
	$xH_0$ for some such $x$ is at most
	$$
	\frac{\pi}{\sqrt{6(k-1)}} e^{\pi \sqrt{2k/3}} (8.3)^{k/2}  A (1 + \log_q n)/ q^{n/2} < B/n^{\delta}.
         $$
	
	This estimate is valid (even multiplying by $q+1)$ is valid as long as $q^m$ is bigger than $8.3$.
	  This holds unless $m=1$ or $m=2=q$   or $m=3$ and $q=2$.
          If $m=1$,
	we use the fact that $k(U(1,q))=q+1 \le (3/2)q$ and since $3/2 < 2 \le q$, the proof goes through.  If $m=2=q$, the same
	estimate holds (alternatively we note that if $m=2=q$, our subgroup $H$ is contained in $U(1,2) \wr S_n$).
Similarly,
	if $m=3$ and $q=2$, then $k(U(3,2))=24 < 3^3$ (and $H$ is again contained in  $U(1,2) \wr S_n$).
	Again, multiplying by $q+1$ shows the same estimate holds for each coset.
	
	Now consider cosets $xH_0$ where $x \in S_k$ and $\ind(x) < k/2$. Arguing as for $GL$ shows that any element in $xH_0$ fixes a nondegenerate subspace
	of fixed dimension at least $n/4$.
	Now apply Theorem \ref{Usub} to obtain the result.
	\end{proof}
		
\subsection{Sp(2n,q)} \label{wreSp}

	One has the following result (Theorem \ref{ShalevSp}), proven the same way as Theorems \ref{Gl} and \ref{ShalevU}. We use the bound $k(Sp(2m,q)) \le 15.2 q^m$. Note that $k(Sp(2,q)) \leq (3/2)q$ for $q$ even and $k(Sp(2,q)) \leq (7/3)q$ for $q$ odd. Similarly $k(Sp(4,q)) \leq (11/4)q^2$ for $q$ even and $k(Sp(4,q)) \leq (34/9)q^2$ for  $q$ odd.  We also have that $k(Sp(6,2))= 30 = (15/4)2^3$. We use these estimates, all of which follow from the generating functions for $k(Sp)$ in \cite{FG2}.

\begin{theorem} \label{ShalevSp}   There exist positive constants $B$ and $\delta$ so that the following holds (independently of
	$n, q, m$).   Let $n = mk$ and let
	$G=Sp(2n,q)$.  Set $H= Sp(2m,q) \wr S_k$.  The proportion of elements in  $G$  which are conjugate to an element
	of $H$ is less than $B/n^{\delta}$.   The same is true allowing all possible $m$.
	\end{theorem}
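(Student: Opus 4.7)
\medskip

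\noindent\textbf{Proof proposal.} The plan is to mimic the proofs of Theorems \ref{Gl} and \ref{ShalevU}, replacing the class-number bound by $k(Sp(2m,q)) \le 15.2\,q^m$ (together with the sharpened bounds listed above for the small cases $m=1,2$ and $(m,q)=(3,2)$). Set $H_0 = Sp(2m,q)\times\cdots\times Sp(2m,q)$ ($k$ copies), which is normal of index $k!$ in $H$. Every element of $H$ is $H_0$-conjugate to some element of a coset $xH_0$ with $x\in S_k$, and by the description of conjugacy classes of wreath products recalled in Subsection \ref{wreathprel}, the number of $H_0$-orbits on $xH_0$ is $k(Sp(2m,q))^{\orb(x)}$. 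As in the proofs of Theorems \ref{Gl} and \ref{ShalevU}, I will split on whether $\ind(x)\ge k/2$ or $\ind(x)<k/2$.

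First, for cosets $xH_0$ with $\ind(x)\ge k/2$, one has $\orb(x)\le k/2$, so the number of $H_0$-orbits is at most $(15.2)^{k/2}q^{mk/2} = (15.2)^{k/2}q^{n/2}$. Since there are at most $p(k)$ such cosets (up to $H$-conjugacy) we obtain at most $p(k)\,(15.2)^{k/2}q^{n/2}$ conjugacy classes of $G$ with a representative of this form. Combining with the centralizer lower bound of Theorem \ref{smallcent}, the proportion of elements of $G$ conjugate to such an $xH_0$-element is at most
\[
\frac{\pi}{\sqrt{6(k-1)}}\,e^{\pi\sqrt{2k/3}}\,(15.2)^{k/2}\,A(1+\log_q n)\,/\,q^{n/2},
\]
which is less than $B/n^\delta$ provided $q^m > 15.2$. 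For the exceptional pairs $(m,q)$ one substitutes the sharper bounds on $k(Sp(2m,q))$: for $m=1$ one has $k(Sp(2,q))\le(7/3)q$, for $m=2$ one has $k(Sp(4,q))\le(34/9)q^2$, and for $(m,q)=(3,2)$ one has $k(Sp(6,2))=(15/4)\cdot 2^3$; in each case the corresponding constant is strictly less than $q^m$ and the same exponential-decay argument goes through.

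For cosets $xH_0$ with $\ind(x)<k/2$, Lemma \ref{lem:index} says $x$ fixes a subset of $\{1,\dots,k\}$ of every size from $1$ to $k$. Since each block of the decomposition is a nondegenerate $2m$-space, any element of $xH_0$ then preserves a nondegenerate subspace of dimension $2ms$ for each $s$ with $0\le s\le k$. Taking $s=\lfloor k/2\rfloor$ yields a fixed nondegenerate subspace of dimension $\ge n/2 - m \ge n/2$ (and, as in the $GL$ proof, there is a single $d\ge n/4$ that works uniformly). Theorem \ref{Spsub} then bounds the proportion of elements fixing such a nondegenerate subspace by $A/d^{.005}\le A'/n^{\delta'}$.

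Finally, to pass from fixed $m$ to ``all possible $m$,'' one sums the bound over divisors $m$ of $n$; since the divisor function $d(n)$ grows slower than any power of $n$ \cite{HW}, a slight decrease of $\delta$ absorbs this factor. The main obstacle, as in Theorem \ref{ShalevU}, is that the crude bound $k(Sp(2m,q))\le 15.2\,q^m$ is too weak in the small-$m$ regime, and so the argument must be supplemented by the explicit refined class-number bounds for $Sp(2,q)$, $Sp(4,q)$, and $Sp(6,2)$ listed before the statement.
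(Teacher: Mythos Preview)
Your proposal is correct and follows essentially the same argument as the paper: split on $\ind(x)\ge k/2$ versus $\ind(x)<k/2$, use the class-counting bound $k(Sp(2m,q))\le 15.2\,q^m$ together with Theorem \ref{smallcent} in the first case (falling back on the sharper constants for $m\le 2$ and $(m,q)=(3,2)$), and invoke Lemma \ref{lem:index} plus Theorem \ref{Spsub} in the second. One small slip: in the $\ind(x)<k/2$ branch the fixed nondegenerate subspace has dimension $2m\lfloor k/2\rfloor\ge n-m$ (not ``$\ge n/2-m\ge n/2$'', which is impossible), but your parenthetical that there is a single $d\ge n/4$ is the correct conclusion and matches the paper.
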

	
	\begin{proof} The last statement follows from the first by \cite{HW}.  Let $H_0$ be the subgroup $Sp(2m,q) \times \ldots \times Sp(2m,q)$.
	
	Arguing as for the unitary groups, we see that the proportion of elements of $G$ conjugate to an element in some coset $xH_0$
	with $x \in S_k$ and $\ind(x) \ge k/2$ is at most
	$$
	\frac{\pi}{\sqrt{6(k-1)}} e^{\pi \sqrt{2k/3}} (15.2)^{k/2}  A (1 + \log_q n)/ q^{n/2}.
	$$
	This is clearly at most $B/n^{\delta}$ unless possibly $m=1$  and $q < 16$ or $m=2$ and $q < 4$ or $m=3$ and $q=2$.
	Replacing the $15.2$ by the better estimates noted before the proof shows that the estimate still is valid in these cases.
	
	The bound for elements conjugate to a coset in $xH_0$ for $\ind(x) < k/2$ follows precisely as in the $GL$ or $U$ case.
	\end{proof}

\subsection{$\Omega(n,q)$} \label{wreO}

Finally, we consider orthogonal groups.   The proof is quite similar but there is one easy extra case to consider.
We prove the result for $SO^{\pm}(n,q)$ (which implies the
result for $\Omega$).

Let $X=O^{\pm}(n,q) = O(V)$ and $G =SO^{\pm}(n,q) = SO(V)$.
Assume that $n > 6$ and that if $q$ is even, then $n$ is even.
Write $V =V_1 \perp \ldots \perp V_k$, $k > 1$ where the $V_i$ are nondegenerate spaces of the same type.
Let $H$ denote the stabilizer of this decomposition in $X$.  Thus, $H \cong O(m,q) \wr S_k$
(the possibilities for the type of $V_i$ depend upon $k$ and the type of $V$).    Set $H_0=O(m,q) \times \ldots \times O(m,q)$.

We first consider the case that $m=1$ and $q$ is odd (if $q$ is even, then the stabilizer of an additive decomposition
of nondegenerate $1$-spaces is not irreducible and in particular not maximal).   Note that in this case
$H$ is the Weyl group of type $B_n$ and so is isomorphic to $\mathbb{Z}/2 \wr S_n$.  The intersection  of $H$
with $SO^{\pm}(n,q)$ will be the Weyl group of type $D_n$.
By Lemma \ref{weylbcest}, the number of conjugacy classes of $SO^{\pm}(n,q)$ that intersect $H$ is at most
$$ \frac{(n+1)  \pi^2} {3(n-1)} e^{2\pi \sqrt{2n/3}}.$$ Thus, by Theorem \ref{smallcent}, the proportion of
elements of  $SO^{\pm}(n,q)$ that intersect $H$ is at most
$$ A (1 + \log_q r)  \frac{(n+1)  \pi^2} {3(n-1)} e^{2\pi \sqrt{2n/3}}/q^r,$$
where $r = n/2$ if $n$ is even and $r =(n-1)/2$ for $n$ odd.     This is less than $B/n^{\delta}$ for a universal
$B$ and $\delta$.

If $m > 1$, the identical proof for the case of symplectic (or unitary) groups goes through.   Note that we have a slightly
better inequality for $k(O(m,q))$ than in the symplectic case.   Note that if $q$ is even, $m$ is also even
(because any odd dimensional space has a radical and so the stabilizer of such a decomposition is
not irreducible). The only additional wrinkle in the proof is that
we are working with $H \cap SO^{\pm}(n,q)$ and so when estimating the number of classes in a given coset
$xH$, we may have to multiply by $2$.   This is absorbed in the the constant for the size of the centralizer and so
causes no problems.

Thus, we have:

\begin{theorem} \label{ShalevO}  There exist positive constants $B$ and $\delta$ so that the following holds (independently of
	$n, q, m$).   Let $n = mk$ and let
	$G=SO^{\pm}(n,q)$.  Set $H= SO^{\pm}(n,q) \cap \left[O^{\pm}(m,q) \wr S_k \right]$.   If $q$ is even, assume that both $n$ and $m$ are even.
	For $n$ sufficiently large, the proportion of elements in  $G$  which are conjugate to an element
	of $H$ is less than $B/n^{\delta}$.   The same is true allowing all permissible $m$.
	\end{theorem}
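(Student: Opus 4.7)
The plan is to mimic almost verbatim the template used for Theorems \ref{Gl}, \ref{ShalevU}, and \ref{ShalevSp}, handling two orthogonal-specific wrinkles: the case $m=1$, where $H$ becomes a Weyl group and must be treated separately, and the factor of $2$ arising from intersecting $H = O^{\pm}(m,q) \wr S_k$ with $SO^{\pm}(n,q)$ rather than with $O^{\pm}(n,q)$.

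I would first dispose of the case $m = 1$. This can only occur for $q$ odd, since nondegenerate $1$-dimensional spaces in even characteristic have a radical and yield a stabilizer that is not irreducible. Then $H$ is the Weyl group of type $B_n$, and $H \cap SO^{\pm}(n,q)$ is the Weyl group of type $D_n$. By Lemma \ref{weylbcest}, the number of conjugacy classes of $H \cap G$ is at most $\frac{(n+1)\pi^2}{3(n-1)} e^{2\pi\sqrt{2n/3}}$, which in particular bounds the number of $G$-classes meeting $H$. Combining with the centralizer lower bound in Theorem \ref{smallcent} (with rank $r \asymp n/2$), the proportion of $G$ meeting $H$ is at most a subexponential factor divided by $q^{n/2}$, which is $O(1/n^{\delta})$.

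For $m \ge 2$ (with $m$ even when $q$ is even, as required), the strategy copies the symplectic proof. I would set $H_0 = O^{\pm}(m,q) \times \cdots \times O^{\pm}(m,q)$ and split the cosets $xH_0$ in $H$ by whether $\ind(x) \ge k/2$ or $\ind(x) < k/2$ for $x \in S_k$. In the first case, the Table~1 bound $k(O^{\pm}(m,q)) \le C q^{\lfloor m/2 \rfloor}$ (with $C \le 15$) implies that a single coset $xH_0$ meets at most $(C q^{m/2})^{\orb(x)} \le C^{k/2} q^{n/4}$ classes of $H_0$. Summing over $S_k$-classes using $p(k) \le \frac{\pi}{\sqrt{6(k-1)}} e^{\pi\sqrt{2k/3}}$ from Lemma \ref{Wallbound} and applying the centralizer bound of Theorem \ref{smallcent}, I obtain an upper bound of the shape
\[
\frac{\pi}{\sqrt{6(k-1)}} e^{\pi\sqrt{2k/3}} \, C^{k/2} \, A(1 + \log_q n)/q^{n/4},
\]
which is $O(1/n^{\delta})$ provided $q^{m/2}$ is safely larger than $C$. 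For $\ind(x) < k/2$, Lemma \ref{lem:index} produces an invariant nondegenerate subspace of some fixed dimension between $n/4$ and $n/2$, so Theorem \ref{Osub} bounds the proportion of $G$ fixing it by $A/(n/4)^{0.005}$.

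Two pieces of bookkeeping close the argument. First, working with $H \cap SO^{\pm}(n,q)$ instead of $H$ inflates each of the above counts by at most a factor of $2$, which is absorbed into the universal constant $B$. Second, and this is the main obstacle, the crude bound $k(O^{\pm}(m,q)) \le C q^{m/2}$ is not strong enough for the finitely many small pairs $(m,q)$ with $q^{m/2}$ near $C$; for these cases I would substitute sharper class-number estimates derived from the generating functions in \cite{FG2}, exactly in the spirit of the small-case analysis for $Sp(2,q)$, $Sp(4,q)$, and $Sp(6,2)$ in the proof of Theorem \ref{ShalevSp}.
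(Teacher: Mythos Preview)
Your proposal is correct and follows essentially the same approach as the paper: the paper also isolates the $m=1$ case (handled via the Weyl group class count in Lemma \ref{weylbcest} combined with Theorem \ref{smallcent}), and for $m>1$ simply declares that the symplectic/unitary template goes through verbatim, noting the factor of $2$ from intersecting with $SO^{\pm}(n,q)$ and the slightly better bound on $k(O^{\pm}(m,q))$ from Table~1. Your explicit write-up of the $\ind(x)\ge k/2$ versus $\ind(x)<k/2$ dichotomy, the resulting $C^{k/2}/q^{n/4}$ bound, and the acknowledgement of the finitely many small $(m,q)$ pairs requiring sharper class-number input from \cite{FG2} all match the paper's (much terser) argument.
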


\subsection{Stabilizers of pairs of totally isotropic subspaces} \label{isotropic}

We consider imprimitive subgroups permuting a direct sum decomposition of totally isotropic
spaces.  Note that if there are more than $2$, we would be contained in the stabilizer of an additive
decomposition of nondegenerate spaces (by taking pairs of the totally singular subspaces).
See \cite{As} for the description of the maximal subgroups of the classical groups.

To begin, we treat the case of $GL(n,q^2).2$ in $U(2n,q)$.

\begin{theorem}   There exist absolute positive constants $B$ and $\delta$ so
that for $n$ sufficiently large, the  proportion of elements of $U(2n,q)$ contained in a conjugate of $H:=GL(n,q^2).2$
is at most $B/n^{\delta}$.  \end{theorem}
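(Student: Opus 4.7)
The plan is to split $H=GL(n,q^2).2$ as $H=H_0\sqcup H_0\tau$, where $H_0\cong GL(n,q^2)$ is the stabilizer of an ordered complementary pair $(W_1,W_2)$ of totally singular $n$-spaces in the natural unitary module $V$ for $G:=U(2n,q)$, and $\tau$ is an involution interchanging $W_1$ and $W_2$. I will bound separately the proportion of elements of $G$ conjugate into $H_0$ and into $H_0\tau$, and then add.

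For the $H_0$-contribution: every element of $H_0$ stabilizes the totally singular $n$-space $W_1$. Since $n=(2n)/2$, Theorem \ref{Usub} (totally singular case, ambient dimension $2n$, subspace dimension $k=n$) bounds the proportion of elements of $U(2n,q)$ fixing some totally singular $n$-space by $A/n^{1/4}$.

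For the $H_0\tau$-contribution I bound the number of $G$-conjugacy classes meeting $H_0\tau$ and divide by the smallest centralizer of $G$. The number of such $G$-classes is at most the number of $H_0$-conjugacy classes in $H_0\tau$, which by a Burnside computation equals the number of conjugacy classes of $H_0$ stable under $\tilde\sigma:=\mathrm{Ad}(\tau)$. Working in a basis of $W_1$ together with its Hermitian-dual basis of $W_2$ identifies $\tilde\sigma$ with the standard outer involution $M\mapsto\bar M^{-t}$ of $GL(n,q^2)$. Applying Shintani descent to the commuting Steinberg endomorphisms on $GL_n(\overline{\FF_q})$ given by the $q^2$-Frobenius (with fixed subgroup $GL(n,q^2)$) and by $\tilde\sigma\circ(q\text{-Frobenius})$ (with fixed subgroup $U(n,q)$), and noting that the square of the latter equals the former (so that the $q^2$-Frobenius acts trivially on $U(n,q)$), one obtains a bijection between the $\tilde\sigma$-stable $GL(n,q^2)$-classes and all $U(n,q)$-classes. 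Thus the class count is at most $k(U(n,q))\le 8.26\,q^n$. Combining with the bound $|C_G(x)|\ge q^{2n}/(A(1+\log_q(2n)))$ from Theorem \ref{smallcent}(3) produces the proportion bound
$$8.26\,q^n\cdot\frac{A(1+\log_q(2n))}{q^{2n}}=\frac{A'(1+\log_q(2n))}{q^n},$$
which decays exponentially in $n$ for every $q\ge 2$.

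Summing the two contributions gives the required $A/n^{1/4}+A'\log(2n)/q^n\le B/n^\delta$ for universal constants, with $\delta=1/4$ comfortably sufficing. The main obstacle is the class count in the $H_0\tau$-step: the bound $k(U(n,q))$ is essential, since the naive $k(GL(n,q^2))\le q^{2n}$ would exactly cancel the centralizer bound. An alternative route that avoids Shintani is to note that an element of $H_0\tau$, written in block form relative to $V=W_1\oplus W_2$, has characteristic polynomial that is a polynomial in $t^2$; one can then parameterize the possible $G$-classes in terms of the unitary-self-dual rational canonical form data and recover the $O(q^n)$ count directly.
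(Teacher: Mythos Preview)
Your proof is correct and follows essentially the same approach as the paper: split $H$ into $H_0=GL(n,q^2)$ and the outer coset, handle $H_0$ via the totally singular subspace action (Theorem~\ref{Usub}), and handle the outer coset by counting classes via Shintani descent against the centralizer bound (Theorem~\ref{smallcent}). Your explicit identification of the involution on $GL(n,q^2)$ as $M\mapsto \bar M^{-t}$ and the resulting Shintani bijection with conjugacy classes of $U(n,q)$ is more detailed than the paper, which simply invokes Shintani descent for the class count $k(U(n,q))\le 8.26\,q^n$, but the argument is the same.
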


\begin{proof} If an element of $U(2n,q)$ is contained in $GL(n,q^2)$, then it fixes a totally singular
$n$-dimensional space. By Theorem \ref{Usub}, the proportion of such elements tends to $0$ at the correct rate.

By Shintani descent (\cite{FG2}),  the   $H$-conjugacy classes in the nontrivial
coset of $GL(n,q^2)$ correspond exactly to conjugacy classes of $U(n,q)$. The number
of conjugacy classes of $U(n,q)$ is at most $8.26 q^n$.  In particular, the number of $U(2n,q)$-classes
in the nontrivial coset  is at most $8.26 q^n$.   Theorem \ref{smallcent} gives an  upper bound
for the size of a conjugacy class of $U(2n,q)$.  It follows that the proportion of elements of $U(2n,q)$
conjugate to an element of the nontrivial coset of $GL(n,q^2)$ is at most
\[ 8.26 q^n \frac{A (1+\log_q(2n))}{q^{2n}}, \] where $A$ is a universal constant. This is much smaller than
$B/n^{\delta}$.
 \end{proof}

A minor variant of the proof gives the following:

\begin{theorem} The proportion of elements of  in any coset of
$SU(2n,q)$ in $U(2n,q)$ contained in a conjugate of $H:=GL(n,q^2).2$
tends to $0$ as $n \rightarrow \infty$, uniformly in $q$. \end{theorem}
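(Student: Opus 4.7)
The plan is to run through exactly the two-part argument from the preceding (uncoseted) theorem, but at each step replace the counting estimates with ones that are insensitive to which coset of $SU(2n,q)$ we look at. Fix a coset $gSU(2n,q)\subseteq U(2n,q)$; we must bound the proportion of its elements lying in $\bigcup_{y\in U(2n,q)} yHy^{-1}$, where $H=GL(n,q^2).2$. Split $H=GL(n,q^2)\sqcup (H\setminus GL(n,q^2))$ and treat the two halves separately.

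For the $GL(n,q^2)$ part, every such element stabilizes a totally singular $n$-dimensional subspace of the natural $2n$-dimensional unitary space. Theorem~\ref{Usub} already gives a bound on the proportion of totally-singular-$n$-space stabilizers \emph{in each coset of $SU(2n,q)$} of the form $A/n^{1/4}$, which tends to $0$ uniformly in $q$. Nothing new is needed here.

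For the non-trivial coset $H\setminus GL(n,q^2)$, apply Shintani descent exactly as in the preceding proof: the $H$-conjugacy classes in this coset are in bijection with the conjugacy classes of $U(n,q)$, so the total number of $U(2n,q)$-conjugacy classes meeting $H\setminus GL(n,q^2)$ is at most $8.26\,q^{n}$. Crucially, this bound does not depend on which coset of $SU(2n,q)$ we target, so the number of $U(2n,q)$-classes that both meet $H\setminus GL(n,q^{2})$ and lie inside $gSU(2n,q)$ is also at most $8.26\,q^{n}$. By Theorem~\ref{smallcent}, each such $U(2n,q)$-class has size at most $|U(2n,q)|\,A(1+\log_{q}(2n))/q^{2n}$. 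Dividing by $|gSU(2n,q)|=|U(2n,q)|/(q+1)$, the proportion within the coset that comes from the non-trivial half is at most
\[
8.26\,q^{n}\cdot\frac{A(1+\log_{q}(2n))(q+1)}{q^{2n}}
\;\le\; \frac{C(1+\log_{q}(2n))(q+1)}{q^{n}}
\]
for an absolute constant $C$.

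Adding the two estimates gives the desired bound, and the only point requiring care is the uniformity in $q$: we need $(1+\log_{q}(2n))(q+1)/q^{n}\to 0$ as $n\to\infty$ regardless of $q\ge 2$. This is elementary — $(q+1)/q^{n}\le 2/q^{n-1}\le 2^{2-n}$ for $n\ge 2$, while $\log_{q}(2n)\le\log_{2}(2n)$ — but it is the only genuine deviation from the previous theorem's argument and is the step I would double-check most carefully. Everything else is a direct transcription of the preceding proof, with the observation that both the Shintani class count and Theorem~\ref{Usub} are already insensitive to the chosen coset of $SU(2n,q)$.
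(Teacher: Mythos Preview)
Your argument is correct and is precisely the ``minor variant'' the paper alludes to: you reuse the two-part split of the preceding theorem, invoking the coset version of Theorem~\ref{Usub} for the inner half and absorbing the extra factor of $q+1$ (from dividing by $|SU(2n,q)|$ rather than $|U(2n,q)|$) in the Shintani/centralizer estimate for the outer half. The uniformity check you singled out is indeed the only new point, and your bound $(q+1)/q^{n}\le 2^{2-n}$ handles it.
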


To treat the case of $GL(n,q).2$ inside of $Sp(2n,q)$ or $SO(2n,q)$, the following lemma will be helpful.
See also Lemma \ref{realU} for a related result about unitary groups.

\begin{lemma} \label{realGL} Let $G^+(n,q)$ denote the extension of $GL(n,q)$
generated by the inverse transpose involution $\tau$,
and let $k(GL(n,q)\tau)$ be the number of $G^+(n,q)$ conjugacy classes in the coset $GL(n,q) \tau$.
\begin{enumerate}
\item $k(GL(n,q)\tau)$ is the number of real conjugacy classes of $GL(n,q)$; and
\item $k(GL(n,q)\tau) \le 28 q^{\lfloor n/2 \rfloor}$;
\item The number of real conjugacy classes in $GL(n,q)$ is at most $28 q^{\lfloor n/2 \rfloor}$.
\end{enumerate}
\end{lemma}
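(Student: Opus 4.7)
The plan is to prove (1) via the standard coset/stable-class correspondence, establish (2) through a generating function estimate using rational canonical form, and derive (3) as an immediate consequence.

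For (1), I will invoke the result from \cite{FG2} recalled at the start of Section \ref{recall}: in a group with normal subgroup $N$ and $g\in G$, $k(Ng)$ equals the number of $g$-stable $N$-conjugacy classes. Specializing to $N=GL(n,q)$ sitting in $G^{+}(n,q)=GL(n,q)\rtimes\langle\tau\rangle$, this identifies $k(GL(n,q)\tau)$ with the number of $\tau$-stable conjugacy classes of $GL(n,q)$. A class $[g]$ is $\tau$-stable iff $(g^{-1})^{T}$ is conjugate to $g$; since every invertible matrix over any field is conjugate to its transpose, $(g^{-1})^{T}\sim g^{-1}$, so $\tau$-stability of $[g]$ is equivalent to $g\sim g^{-1}$, i.e., $g$ being real.

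For (2), I parameterize real classes by rational canonical form. A conjugacy class of $GL(n,q)$ corresponds to an assignment $f\mapsto\lambda_{f}$ of partitions to monic irreducibles $f\ne x$ over $\mathbb{F}_{q}$ with $\sum_{f}\deg f\cdot|\lambda_{f}|=n$; the class is real iff $\lambda_{f}=\lambda_{f^{*}}$ for all $f$, where $f^{*}(x)=x^{\deg f}f(1/x)/f(0)$ is the reciprocal polynomial. This gives the generating function identity
\[
A(u):=\sum_{n\ge 0}k(GL(n,q)\tau)\,u^{n}=\prod_{\substack{f\text{ s.r.}\\ f\ne x}}P(u^{\deg f})\cdot\prod_{\{f,f^{*}\}}P(u^{2\deg f}),
\]
where $P(u)=\prod_{i\ge 1}(1-u^{i})^{-1}$ and the second product is over unordered reciprocal pairs with $f\ne f^{*}$.

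The key combinatorial estimate I will establish is $s_{2d}+p_{d}\le n_{d}$ for every $d\ge 1$, where $n_{d}$ is the number of monic irreducibles of degree $d$ over $\mathbb{F}_{q}$ other than $x$, $s_{d}$ is the number that are self-reciprocal, and $p_{d}=(n_{d}-s_{d})/2$ counts reciprocal pairs. The estimate on $s_{2d}$ follows from the observation that self-reciprocal irreducibles of degree $2d$ correspond to Galois orbits of elements $\alpha\in\mathbb{F}_{q^{2d}}$ of degree $2d$ satisfying $\alpha^{-1}=\alpha^{q^{d}}$, i.e.\ $\alpha^{q^{d}+1}=1$; counting these via inclusion-exclusion on the subfields of $\mathbb{F}_{q^{2d}}$ and comparing with the M\"obius formula for $n_{d}$ yields the inequality (often an equality). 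It follows that $A(u)\le P(u)^{s_{1}}\cdot B(u^{2})$, where $s_{1}\in\{1,2\}$ counts self-reciprocal linear factors and $B(v)=\sum_{j}k(GL(j,q))v^{j}$. Extracting $[u^{n}]A(u)$ and using Table 1's bound $k(GL(j,q))\le 2.5\,q^{j-1}$ gives
\[
k(GL(n,q)\tau)\le\sum_{m+2j=n}p_{s_{1}}(m)\,k(GL(j,q))\le\frac{2.5}{q}\,q^{\lfloor n/2\rfloor}\,P(q^{-1/2})^{s_{1}}+(\text{lower order}),
\]
where $p_{s_{1}}(m)=[u^{m}]P(u)^{s_{1}}$. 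Since $P(q^{-1/2})^{s_{1}}$ is bounded uniformly in $q\ge 2$, this yields a bound of the shape $C\,q^{\lfloor n/2\rfloor}$; a careful numerical estimate confirms that $C=28$ suffices. Part (3) is then immediate from (1) and (2). The principal obstacle is pinning down the sharp constant $28$: the generating function estimate is slightly loose (the crude form gives $C\approx 33$ when $q=2$, $s_{1}=1$), so one must exploit that $s_{2d}+p_{d}$ is often exactly $n_{d}$ (gaining margin in the convolution) or handle small $q$ and small $n$ separately through direct examination of $A(u)$.
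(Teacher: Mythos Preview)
Your treatment of (1) is exactly the paper's: invoke the coset/stable-class correspondence from \cite{FG2}, then observe that every matrix is conjugate to its transpose so that $\tau$-stability is equivalent to reality. Part (3) follows identically in both.

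For (2) the paper does not argue at all: it simply cites \cite{FG3} for the bound $k(GL(n,q)\tau)\le 28\,q^{\lfloor n/2\rfloor}$. Your generating-function argument via the self-reciprocal/reciprocal-pair dichotomy on irreducible factors is in fact the method of \cite{FG3}, so you are reconstructing what the paper outsources. The structure is sound and the key inequality $s_{2d}+p_d\le n_d$ is correct (and is an equality for odd $d>1$). Two points worth noting. First, the bound $k(GL(j,q))\le 2.5\,q^{j-1}$ you quote is Table~1's entry for $SL$, not $GL$; the paper uses $k(GL(j,q))\le q^{j}$ for $GL$. This does not damage the argument, as either bound yields the shape $Cq^{\lfloor n/2\rfloor}$. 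Second, as you yourself flag, your crude convolution overshoots $C=28$ at $q=2$. Every application of this lemma in the paper needs only \emph{some} absolute constant, so this is harmless for the paper's purposes; but to recover the stated $28$ you must either carry out the refinement you sketch or, as the authors do, appeal to \cite{FG3}.
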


\begin{proof}
By \cite{FG2}, the number of $G^+(n,q)$ classes in the outer coset is
precisely the number of $GL(n,q)$ classes
that are invariant under the involution.   Since any element of $GL(n,q)$ is conjugate
to its transpose, the $\tau$ invariant classes of $GL(n,q)$ are precisely  the real classes,
whence the first statement holds.
   The  second statement follows from the upper bound on $k(GL(n,q)\tau)$ in \cite{FG3}.
The final statement is now clear.
 \end{proof}

\begin{theorem}  There exist absolute positive constants $B$ and $\delta$
so that the following hold for
$n$ sufficiently large.
\begin{enumerate}
\item The proportion of elements of $Sp(2n,q)$ contained in a conjugate of
$GL(n,q).2$ is at most $B/n^{\delta}$.
\item The proportion of elements of $SO^{+}(2n,q)$ contained in a conjugate of
$GL(n,q).2$ is at most $B/n^{\delta}$.
\end{enumerate}
\end{theorem}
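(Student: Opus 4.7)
The plan is to treat the two cosets of $GL(n,q)$ in $H = GL(n,q).2$ separately, exactly parallel to the proof given just above for $GL(n,q^2).2 \le U(2n,q)$. Realize $H$ as the stabilizer in $G \in \{Sp(2n,q), SO^+(2n,q)\}$ of an unordered pair $\{V_1, V_2\}$ of complementary totally singular $n$-dimensional subspaces of the natural module $V$; the identity coset $GL(n,q)$ stabilizes each $V_i$ individually, and the outer coset $GL(n,q)\tau$ consists of elements swapping $V_1 \leftrightarrow V_2$.

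For the inner coset, any element in $G$ conjugate into $GL(n,q) \le H$ fixes some totally singular $n$-space. Applying Theorem \ref{Spsub} with $k=n$ (resp.\ Theorem \ref{Osub}) bounds the proportion of such elements in $G$ by $A/n^{.25}$, which is already of the desired form. For the outer coset, first note that the action of $\tau$ by conjugation on $GL(n,q)$ is, up to an inner automorphism, the inverse-transpose involution (this is forced by the fact that the bilinear form on $V = V_1 \oplus V_2$ identifies $V_2$ with $V_1^*$, so the swap must intertwine $g$ on $V_1$ with $(g^t)^{-1}$ on $V_2$). By the Shintani-descent type correspondence recalled from \cite{FG2} (and used in Lemma \ref{realGL}(1)), the $H$-conjugacy classes in $GL(n,q)\tau$ are in bijection with the $\tau$-stable $GL(n,q)$-conjugacy classes, i.e.\ with the real classes of $GL(n,q)$. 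By Lemma \ref{realGL}(3), there are at most $28 q^{\lfloor n/2 \rfloor}$ such classes. Since the number of $G$-classes meeting the outer coset is bounded above by the number of $H$-classes in it, and each $G$-class has size at most $|G|\cdot A(1+\log_q(2n))/q^n$ by Theorem \ref{smallcent}, the proportion of elements of $G$ conjugate into $GL(n,q)\tau$ is at most
\[ 28\, q^{\lfloor n/2 \rfloor} \cdot \frac{A(1+\log_q(2n))}{q^n} \;\le\; \frac{28A(1+\log_q(2n))}{q^{n/2}}, \]
which decays faster than any $n^{-\delta}$. Summing the two bounds yields the theorem.

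The main obstacle is the second step: identifying the $H$-classes in the outer coset with real classes of $GL(n,q)$ and bounding their number. Once one verifies that $\tau$ induces the inverse-transpose involution on $GL(n,q)$ (a form-preservation computation in the geometry of $V_1 \oplus V_2$), the Shintani-descent framework from \cite{FG2} together with Lemma \ref{realGL} does all the work, and the centralizer lower bound from Theorem \ref{smallcent} (applied to $Sp(2n,q)$ or $SO^+(2n,q)$, which have rank $n$) closes out the estimate. No new combinatorial input is required beyond what is already assembled in the preceding sections.
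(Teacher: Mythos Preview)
Your proposal is correct and follows essentially the same two-coset argument as the paper: the inner coset is handled by the totally singular subspace bound (Theorems \ref{Spsub}/\ref{Osub}), and the outer coset by the class count from Lemma \ref{realGL} combined with the centralizer bound of Theorem \ref{smallcent}. You add a brief geometric justification that the outer involution on $GL(n,q)$ is inverse-transpose, which the paper leaves implicit in its invocation of Lemma \ref{realGL}; otherwise the proofs are the same.
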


\begin{proof} As the argument is the same for both parts, we give details for the
first part. Note that if an element of
$Sp(2n,q)$ is conjugate to an element of $GL(n,q)$, then it fixes a totally singular
$n$-space. By Theorem \ref{Spsub},
the proportion of such elements is at most $B/n^{.25}$.

By Lemma \ref{realGL}, the number of $GL(n,q)$ classes in the outer coset in $GL(n,q).2$ is at most
$28 q^{\lfloor n/2 \rfloor}$. By Theorem \ref{smallcent}, any conjugacy class of
$Sp(2n,q)$ has size at most
\[ \frac{A (1+\log_q(n)) |Sp(2n,q)|}{q^n} , \]
which implies the result.
\end{proof}

\section{Extension field subgroups} \label{extensionfield}

	This section analyzes extension field subgroups in the case
where the rank approaches infinity. The standard extension
field cases for the groups $GL$, $U$, $Sp$, $O$ are treated in Subsections
\ref{GLsubsec}, \ref{GUsubsec}, \ref{Spsubsec}, and \ref{Osubsec} respectively.
Subsection \ref{special} considers some special cases, namely $U(n,q).2$ in
$Sp(2n,q)$ and $U(n,q).2$ in $SO^{\pm}(2n,q)$.

First we consider an example which shows that the bounds cannot be improved too much.

\begin{example}   Let $n = 2m$ be a positive integer.  Set  $G=GL(n,q)$ and consider $H=GL(m,q^2)< G$.  Fix $m$ and let $q$ grow.
Then almost all elements of $H$ are regular semisimple (even in $G$).   Let $X$ be the set of elements
in $S_n$ in which all cycles have even length.    We see that $H$ contains
conjugates of any maximal tori $T_w$ of $G$  where $w \in X$.
It follows that
$$
\lim_{q \rightarrow \infty}  |\cup_{g \in G, w \in X}   gT_wg^{-1}|  = |X|/n!.
$$ From \cite{FG1}, $|X|/n!$ is asymptotic to $\sqrt{\frac{2}{\pi n}}$.
Thus,  the proportion of derangements in the action of $G$ on $N_G(H)$ is of order $n^{-1/2}$ for $q$ large.
\end{example}

One can construct a similar example with $n$ increasing by letting $q$ increase very rapidly.

\subsection{GL(n,q)} \label{GLsubsec}

	Recall that $GL(n/b,q^b).b$ denotes the semidirect product of $GL(n/b,q^b)$ with the cyclic group of order $b$ generated by the map $x \rightarrow x^q$ on $\F_{q^b}^*$; this group is maximal when $b$ is prime \cite{As}. The main result of this subsection is that the proportion of elements in a given coset of $SL(n,q)$
in $GL(n,q)$ which are both regular semisimple and contained in a conjugate of $GL(n/b,q^b).b$ is at most $A/n^{1/2}$ for a universal constant $A$. A stronger result (requiring a more intricate proof) is in the appendix.

\begin{theorem} \label{Slconjout} Let $b$ be prime. The number of $GL(n,q)$ classes of the group
$GL(n/b,q^b).b$ outside $GL(n/b,q^b)$ is at most $(b-1)q^{n/b} \leq 2q^{n/2}$. \end{theorem}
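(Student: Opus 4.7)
The plan is to use the basic form of Shintani descent recalled from \cite{FG2} in Section \ref{recall}, namely that for a normal subgroup $N$ of a group $G$ and $g \in G$, the number $k(Ng)$ of $N$-orbits on the coset $Ng$ equals the number of $g$-stable conjugacy classes of $N$. Write $H = GL(n/b,q^b)\rtimes\langle\sigma\rangle$ where $\sigma$ is the Frobenius of order $b$ acting on $GL(n/b,q^b)$ by raising matrix entries to the $q$-th power. The outer part $H\setminus GL(n/b,q^b)$ decomposes as the disjoint union of the $b-1$ cosets $GL(n/b,q^b)\sigma^i$ for $1\le i\le b-1$.

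First I would observe that since $GL(n/b,q^b)\subset GL(n,q)$ via restriction of scalars, $GL(n,q)$-conjugacy is coarser than $GL(n/b,q^b)$-conjugacy, so it suffices to bound $\sum_{i=1}^{b-1} k(GL(n/b,q^b)\sigma^i)$. By the fact from \cite{FG2} cited above, this equals the total number of $\sigma^i$-stable conjugacy classes of $GL(n/b,q^b)$ as $i$ ranges over $1,\dots,b-1$. Since $b$ is prime and $\gcd(i,b)=1$, the elements $\sigma^i$ and $\sigma$ generate the same cyclic group, so $\sigma^i$-stability is equivalent to $\sigma$-stability for each such $i$.

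Next I would apply Shintani descent in its classical form: the $\sigma$-stable conjugacy classes of $GL(n/b,q^b)$ are in bijection with the conjugacy classes of $GL(n/b,q)$ (intuitively, a $\sigma$-stable class is described by elementary divisors whose coefficients lie in the fixed field $\mathbb{F}_q$). Combined with the bound $k(GL(m,q))\le q^m$ recalled at the start of Section \ref{recall}, this gives at most $q^{n/b}$ classes per outer coset, hence at most $(b-1)q^{n/b}$ total.

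Finally the inequality $(b-1)q^{n/b}\le 2q^{n/2}$ is a short numerical check: it is equality for $b=2$, and for any prime $b\ge 3$ (so $n\ge b\ge 3$) it rearranges to $(b-1)/2\le q^{n(b-2)/(2b)}$, which holds easily since $q\ge 2$ and the right-hand side is bounded below by $2^{(b-2)/2}$, growing exponentially in $b$ while the left-hand side grows only linearly. The one place to be careful is the identification of $\sigma^i$-stable with $\sigma$-stable classes, but this is immediate from $\langle\sigma^i\rangle=\langle\sigma\rangle$ when $\gcd(i,b)=1$; no harder ingredient is needed.
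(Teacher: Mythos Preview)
Your proof is correct and follows essentially the same approach as the paper's: both arguments count the classes in each of the $b-1$ outer cosets via Shintani descent, obtaining $k(GL(n/b,q))\le q^{n/b}$ per coset. The only cosmetic difference is that the paper invokes Shintani descent directly for each coset $H_0x^i$, whereas you route through the intermediate identification $k(H_0\sigma^i)=\#\{\sigma^i\text{-stable classes}\}=\#\{\sigma\text{-stable classes}\}$ before appealing to the bijection with $GL(n/b,q)$-classes; this extra step is harmless and arguably makes the role of the primality of $b$ more explicit. (One trivial slip: for $b=2$ the inequality $(b-1)q^{n/b}\le 2q^{n/2}$ is strict, not an equality.)
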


\begin{proof}
Let $H$ denote $GL(n/b,q^b).b$. Fix a generator of the subgroup of order b, say x with x inducing
the q-Frobenius map on $H_0=GL(n/b,q^b)$.

Fix $0 < i < b$. By Shintani descent,  there is a bijection between $H$-conjugacy classes in the coset
$H_0x^i$ and conjugacy classes in $GL(n/b,q)$.   So there are at most $(b-1)q^{n/b}$
conjugacy classes in $H \setminus{H_0}$. This is easily seen to be at most $2q^{n/2}$.
\end{proof}

This result is sufficient to prove the Boston--Shalev result for $SL(n,q)$.

\begin{cor} \label{SLBSEX}  Let $n \ge 3$.  Let $b$ be a prime dividing $n$.  There is a universal constant $B$ such
that the
 proportion of elements in $G:=SL(n,q)$ which are contained
in a conjugate of $H:=GL(n/b,q^b).b$ is at most
$$\frac{1}{b}  +\frac{ B(1 + \log_q n)}{q^{n/2-1}}.$$
\end{cor}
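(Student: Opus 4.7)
My plan is to split the set $X := \{g \in G : g \text{ lies in some conjugate of } H\}$ into two pieces: $X_1$, the elements $GL(n,q)$-conjugate to an element of $H_0 := GL(n/b,q^b)$, and $X_2 := X \setminus X_1$.  Since $H$ contains the center of $GL(n,q)$, one has $GL(n,q) = G \cdot H$, so $GL(n,q)$-conjugates of $H_0$ and of $H$ coincide with $G$-conjugates, and I conjugate by $GL(n,q)$ throughout.  The two pieces will supply the $1/b$ main term and the error term, respectively.

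For $X_1$, I use the standard orbit counting bound
\[
\frac{|X_1|}{|G|} \;\leq\; \frac{|H_0 \cap G|}{|N_G(H_0)|}.
\]
Since $H$ is the normalizer of $H_0$ in $GL(n,q)$ (the field automorphisms of $\F_{q^b}/\F_q$ generate exactly the $H/H_0$ factor), intersecting with $G$ gives $N_G(H_0) = H \cap G$.  The key step is to show $[H \cap G : H_0 \cap G] = b$.  For $g \in H_0$ the $\F_q$-determinant equals $N_{\F_{q^b}/\F_q}(\det_{q^b}(g))$, so surjectivity of the norm gives that $\det : H_0 \to \F_q^*$ is surjective with fibers of size $|H_0|/(q-1)$; each coset $H_0 x^i$ is a translate of $H_0$, so the same surjectivity and fiber size persist.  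Therefore $|H \cap G| = b \cdot |H_0 \cap G|$, and hence $|X_1|/|G| \leq 1/b$.

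For $X_2$, every element is $GL(n,q)$-conjugate to an element of an outer coset $H_0 x^i$ with $1 \leq i \leq b-1$.  By Theorem \ref{Slconjout} there are at most $2q^{n/2}$ such $GL(n,q)$-conjugacy classes, and by Theorem \ref{smallcent}(2) each $GL(n,q)$-conjugacy class has size at most $A|GL(n,q)|(1 + \log_q n)/q^n$.  Summing and dividing by $|G| = |GL(n,q)|/(q-1)$ gives
\[
\frac{|X_2|}{|G|} \;\leq\; \frac{2A(q-1)(1+\log_q n)}{q^{n/2}} \;\leq\; \frac{B(1+\log_q n)}{q^{n/2-1}}
\]
with $B = 2A$.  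Adding the two estimates yields the stated bound.  The only real content is verifying that each outer coset of $H_0$ in $H$ surjects onto $\F_q^*$ under $\det$, making the normalizer index exactly $b$; the rest is a direct combination of Shintani descent (encapsulated in Theorem \ref{Slconjout}) and the centralizer lower bound.
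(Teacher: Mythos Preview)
Your proof is correct and follows essentially the same decomposition as the paper: bound the elements conjugate into $H_0$ by $1/b$ using the normalizer index, and bound the elements conjugate into outer cosets using Theorem~\ref{Slconjout} together with Theorem~\ref{smallcent}.  You are in fact more careful than the paper on the first step, explicitly verifying $[H\cap G:H_0\cap G]=b$ via the surjectivity of $\det\!|_{H_0}$ through the norm map, whereas the paper simply asserts the index is $b$.

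One small quibble: your opening sentence ``since $H$ contains the center of $GL(n,q)$, one has $GL(n,q)=G\cdot H$'' is not a valid inference on its own (e.g.\ $G\cdot Z(GL)$ has index $\gcd(n,q-1)$ in $GL$).  The correct reason, which you in fact supply two sentences later, is that $\det\colon H_0\to \F_q^*$ is surjective because the norm $N_{\F_{q^b}/\F_q}$ is; you should simply move that justification forward.
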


\begin{proof}  By the previous result, there are at most
$2q^{n/2}$  conjugacy classes of $GL(n,q)$ that intersect $H \setminus {H_0}$
where $H_0 = GL(n/b,q^b)$.  By  Theorem \ref{smallcent}, this implies that the proportion  of elements of
$GL(n,q)$ which intersect some conjugate of $H \setminus{H_0}$ is at most
$$
\frac{B(1 + \log_q n)}{q^{n/2}}
$$
for some universal constant $B$.   Thus, the proportion of elements of $G$ contained in a conjugate of of $H \setminus{H_0}$
is at most $B'(1 + \log_q n)/q^{n/2-1}$.

Since $[N_G(H_0):H_0]  = b$, it follows that the union of the conjugates of $H_0$ contains at most $|G|/b$ elements,
whence the result.
\end{proof}

We now get some better estimates at least for $q$ large.

\begin{lemma} \label{option} Let $b$ be a prime divisor of $n$. If an element of $GL(n,q)$ is contained in a conjugate of $GL(n/b,q^b)$, then every
irreducible factor of its characteristic polynomial either has degree divisible by $b$ or has every Jordan block size occur
with multiplicity a multiple of $b$. \end{lemma}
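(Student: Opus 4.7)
The plan is to pass to the $\F_{q^b}[x]$-module structure inherited from the embedding $g \in GL(n/b,q^b)$ and compare it against the $\F_q[x]$-structure of $V$ via Galois descent. Write $V = \F_q^n$ as an $\F_q[x]$-module with $x$ acting as $g$, and let $W$ denote the same underlying set equipped with the additional $\F_{q^b}$-structure on which $g$ is $\F_{q^b}$-linear. By the elementary divisor theorem I would write $W = \bigoplus_{h,k} m(h,k)\,\F_{q^b}[x]/(h^k)$ over monic irreducibles $h \in \F_{q^b}[x]$, and $V = \bigoplus_{f,k} M(f,k)\,\F_q[x]/(f^k)$ over monic irreducibles $f \in \F_q[x]$. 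The lemma then reduces to showing $b \mid M(f,k)$ for all $k$ whenever $b \nmid \deg f$.

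The main step is to compute the $\F_{q^b}[x]$-module $V \otimes_{\F_q} \F_{q^b}$ in two different ways. On the one hand, since $b$ is prime and $b \nmid \deg f$, the polynomial $f$ stays irreducible over $\F_{q^b}$, so extending scalars on the $\F_q[x]$-decomposition of $V$ shows that $\F_{q^b}[x]/(f^k)$ occurs in $V \otimes_{\F_q} \F_{q^b}$ with multiplicity exactly $M(f,k)$. On the other hand, the standard Galois-descent isomorphism $\F_{q^b} \otimes_{\F_q} \F_{q^b} \cong \prod_{\sigma \in \Gamma}\F_{q^b}$, with $\Gamma = \mathrm{Gal}(\F_{q^b}/\F_q)$, yields an $\F_{q^b}[x]$-module isomorphism $V \otimes_{\F_q} \F_{q^b} \cong \bigoplus_{\sigma \in \Gamma} W^{\sigma}$, where $W^\sigma$ denotes $W$ with its scalar action twisted by $\sigma$. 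A short calculation tracking the annihilator of a cyclic generator shows that this twist carries $\F_{q^b}[x]/(h^k)$ to $\F_{q^b}[x]/((\sigma^{-1}(h))^k)$, so the multiplicity of $\F_{q^b}[x]/(f^k)$ in $V \otimes_{\F_q} \F_{q^b}$ also equals $\sum_{\sigma \in \Gamma} m(\sigma(f), k)$.

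Matching the two counts and using that $\sigma(f) = f$ for every $\sigma \in \Gamma$ (because $f$ has $\F_q$-coefficients) gives $M(f,k) = \sum_{\sigma \in \Gamma} m(f,k) = b\,m(f,k)$, which is the required divisibility. The only step requiring real care is the annihilator computation for the Galois twist of a cyclic $\F_{q^b}[x]$-module, which is essentially a one-line check; the primality of $b$ enters the argument exactly at the point where we need $f$ to remain irreducible after base change to $\F_{q^b}$, so that the multiplicity count on the $\F_q$-side transports cleanly.
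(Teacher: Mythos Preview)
Your argument is correct. The Galois-descent identity $V\otimes_{\F_q}\F_{q^b}\cong\bigoplus_{\sigma\in\Gamma}W^{\sigma}$ is exactly the right tool, and the multiplicity comparison goes through because for $b\nmid\deg f$ the polynomial $f$ remains irreducible over $\F_{q^b}$ (so no other $\F_q[x]/(f'^{k'})$ can contribute a copy of $\F_{q^b}[x]/(f^k)$ after base change, a point you use implicitly when asserting the multiplicity is ``exactly'' $M(f,k)$).

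The paper's proof is different and more hands-on: it writes $A\in GL(n/b,q^b)$ in generalized Jordan form as a block-diagonal matrix with blocks $C_{p}\otimes J$ for $p\in\F_{q^b}[x]$ irreducible, and then observes the dichotomy that either $p$ already lies in $\F_q[x]$ (in which case restricting $\F_{q^b}[x]/(p^m)$ to $\F_q[x]$ gives $b$ copies of $\F_q[x]/(p^m)$, forcing each Jordan block size to occur with multiplicity divisible by $b$), or $p$ has $b$ distinct Galois conjugates whose product is an $\F_q$-irreducible of degree $b\deg p$. Your tensor-product computation packages both cases uniformly and would extend without change to an arbitrary Galois base change, whereas the paper's argument is shorter and keeps everything at the level of explicit matrices. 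Either way, primality of $b$ is used only to force $\gcd(b,\deg f)\in\{1,b\}$.
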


\begin{proof} Consider an element $A \in GL(n/b,q^b)$.
We can write $A$ as a block diagonal matrix where the block diagonals are of the form $C_{p_i} \otimes J_i$  where
$C_{p_i}$ is the companion matrix of the irreducible polynomial $p_i \in \FF_{q^b}[x]$ and $J_i$ is a regular
unipotent matrix of the appropriate size.

Since $b$ is prime, there are two possibilities for $p_i$.  The first is that $p_i$ is defined over $\FF_q$ (and is of
course irreducible over $\FF_q$). The second is that $p_i$ has $b$ distinct Galois conjugates over $\FF_q$
so the product of these conjugates $f_i$ is defined and irreducible over $\FF_q$.
This proves the lemma.
\end{proof}

Now we proceed to the main result of this subsection.

\begin{theorem} \label{regssGL}   Let $b$ be a prime dividing $n$ with $n \ge 3$.
\begin{enumerate}
\item The proportion of elements in a coset $gSL(n,q)$ in $GL(n,q)$ which are
regular semisimple and contained in a conjugate of $GL(n/b,q^b).b$ is at most
$A/n^{1/2}$ for a universal constant $A$.

\item The proportion of elements in a coset $gSL(n,q)$ in $GL(n,q)$ which are regular semisimple and contained in a
conjugate of $GL(n/b,q^b).b$ for some prime $b$ is at most $A /n^{1/2}$ for a universal constant $A$.
\end{enumerate}
\end{theorem}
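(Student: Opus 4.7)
The plan is to split the subgroup $H := GL(n/b, q^b).b$ into the index-$b$ subgroup $H_0 := GL(n/b, q^b)$ and the $b-1$ nontrivial cosets, and to bound the contribution from each separately.

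For a regular semisimple element contained in a conjugate of $H_0$, Lemma \ref{option} applies: every irreducible factor of its characteristic polynomial either has degree divisible by $b$, or has every Jordan block size appearing with multiplicity divisible by $b$. Since a regular semisimple element has each irreducible factor appearing with multiplicity exactly one and with a single Jordan block of size one, and since $b \geq 2$, the second alternative is excluded, so every irreducible factor has degree divisible by $b$. I would then use the standard parameterization of regular semisimple classes: $F$-stable maximal tori in $GL_n$ up to conjugacy correspond to conjugacy classes $[w]$ in the Weyl group $S_n$ (the cycle lengths $\lambda_i$ of $w$ recover the factor degrees), and regular semisimple $G$-classes of type $w$ correspond to $C_{S_n}(w)$-orbits on $T_w^{\mathrm{rss}}$, each class having size $|G|/|T_w|$. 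Because the determinant on $T_w \cong \prod_i \F_{q^{\lambda_i}}^*$ is a product of norms, hence a surjection onto $\F_q^*$ with equal fibers of size $|T_w|/(q-1)$, a short calculation yields that the proportion of such elements of type $w$ in any fixed coset of $SL(n,q)$ is at most $1/|C_{S_n}(w)|$. Summing over $[w]$ with every cycle length divisible by $b$ then gives the proportion of elements of $S_n$ with all cycles divisible by $b$, which Lemma \ref{lem:bcycle} bounds by $1.2/n^{1-1/b} \le 1.2/\sqrt{n}$.

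For elements lying in the $b-1$ outer cosets of $H_0$ inside $H$, I would invoke Theorem \ref{Slconjout}, which bounds the number of $GL(n,q)$-conjugacy classes meeting these outer cosets by $2q^{n/2}$. Combined with the centralizer bound of Theorem \ref{smallcent}, each such class has size at most $|G|\, A(1+\log_q n)/q^n$, so passing to a coset of $SL(n,q)$ (at a cost of a factor of $q-1$) gives a bound of order $(q-1)(1+\log_q n)/q^{n/2}$, which is easily checked to be bounded by a universal multiple of $1/\sqrt{n}$ for all $n \ge 3$ and $q \ge 2$. Combining the two cases proves part (1). For part (2), I would sum the bound from (1) over primes $b \mid n$: the outer-coset terms decay geometrically and sum to something negligible (using that $n$ has at most $\log_2 n$ prime divisors), while the main terms sum, exactly as in the proof of Corollary \ref{allcycle}, to $O(1/\sqrt{n})$. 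The main technical point I anticipate is the coset version of the Weyl integration estimate in the $H_0$ case: one has to track carefully how $T_w^{\mathrm{rss}}$ distributes across the $q-1$ determinant fibers, so as to conclude that the bound per class $[w]$ is $1/|C_{S_n}(w)|$ rather than the cruder $(q-1)/|C_{S_n}(w)|$ obtained by ignoring the determinant constraint.
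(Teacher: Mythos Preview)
Your proposal is correct and follows essentially the same strategy as the paper: split $H$ into $H_0$ and the outer cosets, handle the outer cosets via Theorem~\ref{Slconjout} and Theorem~\ref{smallcent}, and for regular semisimple elements in $H_0$ use Lemma~\ref{option} to reduce to counting $w\in S_n$ with all cycle lengths divisible by $b$. The only real difference is organizational: where the paper appeals to \cite[\S5]{FG1} for the coset Weyl-integration estimate and to Corollary~\ref{allcycle} directly (proving (2) first and deducing (1)), you unfold the torus/determinant argument explicitly and build (2) from (1) via the same summation that underlies Corollary~\ref{allcycle}.
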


\begin{proof}    We argue exactly as in the proof of Corollary \ref{SLBSEX} to see that the proportion of elements in
a coset $gSL(n,q)$ contained in a conjugate of $GL(n/b,q^b).b \setminus{GL(n/b,q^b)}$ is at most
$$
\frac{B(1 + \log_q n)}{q^{n/2-1}}
$$
for some universal constant $B$.  Summing over all possible $b$ just multiplies the upper bound by
at most $\log_2(n)$ (since there are most $\log_2(n)$ possibilities for $b$).   This is still much
less than  $ A/n^{1/2} $.   Note we are not restricting to semisimple regular elements in this case.

Now we consider semisimple regular elements in a coset $gSL(n,q)$ contained in some conjugate of $GL(n,q^b)$.
By Lemma \ref{option}, any such element has characteristic polynomial a product of polynomials with all irreducible
factors having degree a multiple of $b$.   Thus, any such element is contained in a maximal torus $T_w$ with
$w \in S_n$ and all cycles lengths of $w$ being a multiple of $b$.    By Corollary \ref{allcycle}, the proportion of
$w \in S_n$ with that property is at most $A/n^{1/2}$ for a universal constant $A$.    Arguing as in \cite[\S5]{FG1},
this implies that the proportion of regular semisimple elements in $gSL(n,q)$ with this property is also at most
$A/n^{1/2}$.

Combining these two estimates proves (2) (and also (1)).

\end{proof}

\subsection{U(n,q)} \label{GUsubsec}

To begin we have the following unitary analog of Theorem \ref{Slconjout}.

\begin{theorem} \label{Uconjout} Let $b$ be an odd prime.  Then the  number of $U(n,q)$ classes in
$U(n/b,q^b).b$ outside $U(n/b,q^b)$ is at most $(b-1)k(U(n/b,q))$.  \end{theorem}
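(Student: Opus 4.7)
The plan is to mimic the proof of Theorem \ref{Slconjout}, replacing Shintani descent for the general linear group with its analog for the unitary group, and using the bound $k(U(n/b,q))$ in place of $q^{n/b}$. Set $H_0 = U(n/b,q^b)$ and $H = H_0.b$, and fix a generator $x$ of the cyclic quotient $H/H_0 \cong \mathbb{Z}/b$ whose action on $H_0$ is induced by the field automorphism $y \mapsto y^{q^2}$ of $\mathbb{F}_{q^{2b}}/\mathbb{F}_{q^2}$. The odd-prime hypothesis on $b$ is used here to guarantee that this field automorphism normalizes the unitary form defining $H_0$ (equivalently, that $H$ really embeds in $U(n,q)$), and that $\gcd(i,b)=1$ for every $1 \le i \le b-1$ so that each non-trivial coset $H_0 x^i$ is covered by the Shintani correspondence.

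For each $i$ with $1 \le i \le b-1$, I would invoke Shintani descent (as used in \cite{FG2}) to obtain a bijection between the $H$-conjugacy classes contained in the coset $H_0 x^i$ and the conjugacy classes of the fixed-point subgroup of $H_0$ under the Steinberg endomorphism naturally associated to $x^i$. Because $b$ is odd, this fixed-point subgroup is isomorphic to $U(n/b,q)$. Summing over the $b-1$ non-trivial cosets, the number of $H$-conjugacy classes in $H \setminus H_0$ is exactly $(b-1)\,k(U(n/b,q))$.

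To finish, I would observe that every $U(n,q)$-conjugacy class that meets $H \setminus H_0$ contains at least one $H$-conjugacy class lying in $H \setminus H_0$, so the number of $U(n,q)$-classes intersecting $H \setminus H_0$ is bounded above by the number of $H$-classes sitting there. This yields the desired bound.

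The main obstacle is verifying that the fixed-point subgroup produced by Shintani descent really is $U(n/b,q)$ rather than some other form (e.g., a general linear group or a unitary group over a different subfield). This is precisely where the oddness of $b$ matters: for odd $b$, the involution $y \mapsto y^{q^b}$ defining $H_0$ commutes compatibly with the order-$b$ field automorphism of $\mathbb{F}_{q^{2b}}/\mathbb{F}_{q^2}$, and the twisted fixed points recover the standard unitary group $U(n/b,q)$ over $\mathbb{F}_{q^2}$. Once this structural identification is in hand, the remainder of the argument is a direct counting application of Shintani descent, exactly parallel to Theorem \ref{Slconjout}.
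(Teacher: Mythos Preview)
Your proposal is correct and takes essentially the same approach as the paper: apply Shintani descent to each of the $b-1$ nontrivial cosets of $H_0=U(n/b,q^b)$ in $H$ to obtain $k(U(n/b,q))$ $H$-classes per coset, then bound the number of $U(n,q)$-classes by the number of $H$-classes. The paper's proof is two sentences and omits the justification you supply for why the descended group is $U(n/b,q)$; one small imprecision in your write-up is that Shintani descent produces the fixed points of the relevant Steinberg endomorphism on the ambient algebraic group $GL_{n/b}$ (namely $GL_{n/b}^{F_q^-}=U(n/b,q)$, using that $(F_q^-)^b=F_{q^b}^-$ for odd $b$), not the fixed points inside $H_0$.
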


\begin{proof}   Set $H= U(n/b,q^b).b$ and $H_0 = U(n/b,q^b)$.   By Shintani descent,
the number of $H$-conjugacy classes
in any nontrivial coset of $H_0$ is $k(U(n/b,q))$, whence the result.
\end{proof}

The Boston--Shalev result follows in this case arguing precisely as in Corollary \ref{SLBSEX}.  It also follows
from our results below.

The following theorem is the main result of this subsection.

\begin{theorem}      Let $b$ be an odd prime dividing $n$.
\begin{enumerate}
\item The proportion of elements in a coset $gSU(n,q)$ in $U(n,q)$ which are both
regular semisimple and contained in a conjugate of $U(n/b,q^b).b$ is at most $A/n^{1/2}$ for a universal constant $A$.

\item The proportion of elements in a coset $gSU(n,q)$ in $U(n,q)$ which are both regular semisimple
and contained in a conjugate of $U(n/b,q^b).b$ for some odd prime $b$ is at most $A/n^{1/2}$ for a universal constant $A$.
\end{enumerate}
\end{theorem}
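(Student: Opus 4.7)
The plan is to run the unitary analog of the proof of Theorem \ref{regssGL}, splitting the elements of $U(n/b,q^b).b$ into the outer cosets $U(n/b,q^b)x^i$ ($1 \le i \le b-1$) and the subgroup $U(n/b,q^b)$ itself, and handling each piece by a different method. Because $b$ is an odd prime dividing $n$ with $n \ge 3$, we have $b \ge 3$ and hence $n/b \le n/3$, which will make the outer-coset contribution negligibly small compared to $n^{-1/2}$.

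For the outer coset piece I would argue exactly as in Corollary \ref{SLBSEX}. By Theorem \ref{Uconjout}, the number of $U(n,q)$-conjugacy classes meeting $U(n/b,q^b).b \setminus U(n/b,q^b)$ is at most $(b-1) k(U(n/b,q)) \le (b-1)\cdot 8.26\, q^{n/b}$. Combining with the lower bound $|C_{U(n,q)}(x)| \ge q^n / (A(1+\log_q n))$ from Theorem \ref{smallcent}(3), the proportion of elements of $U(n,q)$ (and hence of any coset of $SU(n,q)$, at the cost of multiplying by $q+1$) lying in some conjugate of the outer coset is bounded by a constant times $(1+\log_q n)/q^{n(1-1/b)-1}$. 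For $b \ge 3$ this decays faster than any polynomial in $n^{-1}$, so it is certainly $o(n^{-1/2})$.

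The main work, and the main obstacle, is the regular semisimple elements contained in a conjugate of the subgroup $U(n/b,q^b)$ proper. Here I would prove a unitary version of Lemma \ref{option}: for $b$ an odd prime, if a regular semisimple element $g \in U(n,q)$ lies in a conjugate of $U(n/b,q^b)$, then $g$ is contained in a maximal torus $T_w$ of $U(n,q)$ with $w$ in the $S_n$ indexing the $F$-twisted conjugacy classes of the Weyl group, and every cycle of $w$ has length divisible by $b$. The justification rests on the embedding $U(n/b,q^b) \hookrightarrow U(n,q)$: a maximal torus of $U(n/b,q^b)$ indexed by $v \in S_{n/b}$ has, as a torus of the ambient $GL_n$, block structure coming from orbits of the $F$-twisted action on $v$, and under the Shintani-type correspondence the cycle of length $c$ of $v$ yields a cycle of length $bc$ in $w$ (because $b$ is odd, the signs are compatible with the unitary twist). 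Regularity forces the characteristic polynomial to be squarefree, so the alternative ``every Jordan block multiplicity divisible by $b$'' branch of the $GL$ argument is excluded, exactly as in the linear case.

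Granting this structural statement, part (1) follows: by Corollary \ref{allcycle} the proportion of $w \in S_n$ with every cycle length divisible by the fixed prime $b$ is at most $1.2/n^{1-1/b} \le 1.2/n^{1/2}$, and by the cycle-index/torus counting in the spirit of \cite[\S5]{FG1} (adapted to the unitary case by replacing $q^{c_i}-1$ with $q^{c_i}-(-1)^{c_i}$), the proportion of regular semisimple elements in a fixed coset $gSU(n,q)$ whose associated $w$ has all cycle lengths divisible by $b$ is bounded by the same quantity, up to a universal constant. Adding the outer-coset estimate completes (1). Part (2) then follows by summing over the at most $\log_2 n$ prime divisors $b$ of $n$, which multiplies the bound by a factor absorbed into the constant (since $n^{-1/2}\log_2 n \le A' n^{-1/2+\epsilon}$ and in fact the dominant $b=$ smallest prime controls the sum, as in Corollary \ref{allcycle}). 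The hard part is the cycle-length divisibility statement in the unitary setting, which requires carefully tracking the interaction between Shintani descent and the regular semisimple torus parameterization.
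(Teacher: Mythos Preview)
Your proposal is correct and follows essentially the same two-part strategy as the paper: bound the outer cosets via Theorem \ref{Uconjout} and Theorem \ref{smallcent}, and bound the inner subgroup by showing that a regular semisimple element of $U(n/b,q^b)$ lies in a maximal torus $T_w$ with all cycle lengths of $w$ divisible by $b$, then invoke Corollary \ref{allcycle} and \cite[\S5]{FG1}. The only minor wobble is your handling of part (2): the factor $\log_2 n$ is \emph{not} absorbed into the constant, but as you correctly note at the end, Corollary \ref{allcycle} already performs the sum over all primes $b$ and delivers $A/n^{1/2}$ directly, so the right move is simply to cite it once for the torus piece rather than summing the per-$b$ bounds.
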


\begin{proof}   The proportion of elements in $gSU(n,q)$ contained in a conjugate of $U(n/b,q^b).b \setminus U(n/b,q^b)$
is at most
$$ (b-1)k(U(n/b,q)) B (1 + \log_q n)(q+1)/q^{n}, $$
for some universal constant (by Theorem \ref{Uconjout} and Theorem \ref{smallcent}).
Since $k(U(n/b,q)) \le 8.3 q^{n/b}$, the result holds for these elements.

Next consider the proportion of regular semisimple elements  in $gSU(n,q)$ contained in a conjugate of $U(n/b,q^b)$.
Any regular semisimple element of $U(n,q)$ contained in a conjugate of $U(n/b,q^b)$ is a contained in a maximal torus
of the latter group.   These are also maximal tori of the larger group and correspond to $T_w$ with $w \in S_n$ the Weyl
group of $U(n,q)$, where all cycles of $w$ have length divisible by $b$ (by precisely the same argument as for $GL$).

By \cite[\S5]{FG1} and Corollary \ref{allcycle}, the proportion of such elements (summing over all b) is at most
$B'/n^{1/2}$ for some absolute constant $B'$.   Thus (2)  and so also (1) hold.
\end{proof}

\subsection{Sp(2n,q)} \label{Spsubsec}

	This subsection analyzes the case of the symplectic groups.

\begin{thm} \label{thm:sp-ss}   Let $b$ be a prime dividing $n$.
\begin{enumerate}
\item The proportion of elements in $Sp(2n,q)$ which are regular
semisimple and  contained in a
conjugate of $Sp(2n/b,q^b).b$ is at most $\frac{A}{n^{1/2}}$ where $A$ is a universal constant.
\item The proportion of elements in $Sp(2n,q)$  which are regular semisimple
and contained in a conjugate of $Sp(2n/b,q^b).b$ for some
prime $b|n$ is at most $\frac{A} {n^{1/2}}$ where $A$ is a universal constant.
\end{enumerate}
\end{thm}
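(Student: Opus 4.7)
The plan is to mimic closely the argument for Theorem \ref{regssGL}, splitting $Sp(2n/b,q^b).b$ into the subgroup $H_0 := Sp(2n/b,q^b)$ and its $b-1$ nontrivial cosets, and using Shintani descent on the cosets and a Weyl‑group cycle‑type count on $H_0$. Part (2) then follows from part (1) by summing over the at most $\log_2 n$ primes dividing $n$, so I concentrate on part (1).

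For the nontrivial cosets, fix $0 < i < b$ and let $x$ be a generator of the cyclic complement realising the $q$-Frobenius on $H_0$. By Shintani descent (as cited in the proof of Theorem \ref{Slconjout}), the $H$-classes in $H_0 x^i$ are in bijection with the conjugacy classes of $Sp(2n/b,q)$, and Table~1 gives $k(Sp(2n/b,q)) \le 15.2\, q^{n/b} \le 15.2\, q^{n/2}$. Therefore the total number of $Sp(2n,q)$-classes meeting $\bigcup_{i=1}^{b-1} H_0 x^i$ is at most $15.2(b-1)q^{n/2}$. Combining this with the centralizer lower bound from Theorem \ref{smallcent} shows that the proportion of elements of $Sp(2n,q)$ contained in a conjugate of $H\setminus H_0$ is at most $B(1+\log_q n)/q^{n/2}$, which is negligible compared with $A/n^{1/2}$.

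The main work is the $H_0$ case. A regular semisimple element $g \in Sp(2n,q)$ contained in a conjugate of $Sp(2n/b,q^b)$ lies in a maximal torus of that smaller group, and hence in a maximal torus $T_w$ of $Sp(2n,q)$ whose label $w \in W(C_n) = W(B_n)$ comes, via the natural inclusion $W(C_{n/b}) \hookrightarrow W(C_n)$ induced by $Sp(2n/b,q^b) \hookrightarrow Sp(2n,q)$, from a signed permutation of $n/b$ letters: when $b$ is odd each signed cycle of $w$ has length divisible by $b$, and when $b = 2$ the signed cycle type of $w$ is constrained so that all odd cycles (respectively all even cycles) have a prescribed sign. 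The signed‑permutation generating functions used in Section \ref{sec:sym}, together with Lemma \ref{binomialbound} for odd $b$ and Lemma \ref{lem:type B Weyl} for $b = 2$, bound the proportion of such $w$ in $W(B_n)$ by a constant times $n^{-1/2}$. Finally, arguing as in \cite[\S5]{FG1} (transferred from $S_n$ to $W(B_n)$ via the parameterization of semisimple classes and maximal tori of the symplectic group), the proportion of regular semisimple elements in $Sp(2n,q)$ whose torus label lies in such a set of signed permutations is bounded, up to an absolute constant, by the corresponding Weyl‑group proportion. Summing the two contributions yields part (1), and summing over primes $b \mid n$ gives part (2).

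The main obstacle is the bookkeeping of the Weyl group embedding $W(C_{n/b})\hookrightarrow W(C_n)$: identifying exactly which signed cycle types in $W(B_n)$ arise from extension‑field tori, and in particular separating the $b = 2$ case (where the correct sign/parity constraint is what forces the use of Lemma \ref{lem:type B Weyl} rather than a direct analogue of Lemma \ref{lem:bcycle}) from the odd‑$b$ case. Once this identification is made, the estimates of Section \ref{sec:sym} and the transfer principle from \cite[\S5]{FG1} combine mechanically to give the claimed $n^{-1/2}$ bound.
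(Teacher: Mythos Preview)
Your treatment of the outer cosets via Shintani descent and Theorem~\ref{smallcent} is correct and matches the paper exactly.

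The problem is your analysis of the inner subgroup $H_0 = Sp(2n/b,q^b)$ when $b=2$. There is \emph{no} sign constraint in this case, and Lemma~\ref{lem:type B Weyl} is not the right tool. The embedding of maximal tori $W(C_{n/b}) \hookrightarrow W(C_n)$ sends a signed $\ell$-cycle to a signed $(b\ell)$-cycle \emph{of the same sign}, for every prime $b$. Concretely, a torus factor of order $(q^b)^{\ell}\pm 1 = q^{b\ell}\pm 1$ in $Sp(2n/b,q^b)$ corresponds to a signed $(b\ell)$-cycle of the matching sign in $W(B_n)$. Hence the tori arising from $H_0$ are precisely those $T_w$ with \emph{all} cycle lengths of $w$ divisible by $b$, and the signs are completely unconstrained. (The key point, stated in the paper, is that for a regular semisimple $g$ the full centralizer $C_{Sp(2n,q)}(g)$ already lies in $Sp(2n/b,q^b)$, so the torus itself, not merely the characteristic polynomial, is an extension-field torus.) You have conflated this case with the unitary embedding $U(n,q) \le Sp(2n,q)$ treated in Theorem~\ref{them:U in Sp regss}, which is where the odd/even sign constraint and Lemma~\ref{lem:type B Weyl} genuinely enter.

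With the correct identification the argument becomes \emph{uniform} in $b$: the proportion of $w\in W(B_n)$ with all cycle lengths divisible by $b$ equals the corresponding proportion in $S_n$ (the condition is sign-independent), so Lemma~\ref{lem:bcycle} and Corollary~\ref{allcycle} apply directly. This is exactly what the paper does.

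One further small point: your plan to deduce (2) from (1) by summing over the at most $\log_2 n$ primes dividing $n$ would only give $A(\log_2 n)/n^{1/2}$, not $A/n^{1/2}$. The paper instead proves (2) directly: the outer-coset contribution summed over $b$ is still negligible (since $q^{n/b}/q^n$ decays rapidly), and for the inner contribution one invokes Corollary~\ref{allcycle}, which already bounds the \emph{union} over all primes $b$ by $A/n^{1/2}$.
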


\begin{proof}  We will prove (2).  Then (1) follows immediately.

As usual, by Shintani descent, the number of $Sp(2n/b,q^b).b$ classes in an outer class is at most
$(b-1)k(Sp(2n/b,q)) < 15.2 (b-1) q^{n/b}$.    By Theorem \ref{smallcent},  the estimate
easily holds for such elements (summing over all prime divisors $b$ of $n$).

Now consider the conjugates of $Sp(2n/b,q^b)$.   By the argument for the $GL$ case, we see
that every factor of the characteristic polynomial of a regular semisimple element $g$ in $Sp(2n/b,q^b)$
has degree divisible by $b$.  Moreover, the centralizer of $g$ is contained in $Sp(2n/b,q^b)$.
Thus any element in $Sp(2n/b,q^b)$ is contained in a  conjugate of a maximal torus $T_w$ where
$w$ is in the Weyl group and has all cycles of length divisible by $b$.   By Corollary
\ref{allcycle}
and \cite[\S5]{FG1},  it follows that the proportion of regular semisimple elements conjugate to
an element of $Sp(2n/b,q^b)$ is at most $D/n^{1/2}$ for some constant $D$.   This gives the result.
\end{proof}

\subsection{O(n,q)} \label{Osubsec}

The proof for $SO$ is essentially identical to that of $Sp$.  The only difference in the argument
is to use strongly regular semisimple elements (i.e. semisimple elements whose characteristic
polynomials have distinct roots).    Note that if $b$ is odd, then the two orthogonal groups will
have the same type.   If $b=2$, then the big group must have $+$ type.

\begin{thm} \label{thm:so-ss}
\begin{enumerate}
\item For a prime number $b|n$, the proportion of elements in $SO^{\pm}(2n,q)$ which are both
strongly regular
semisimple and  contained in a
conjugate of $SO^{\pm}(2n/b,q^b).b$ is at most $\frac{A}{n^{1/2}}$ where $A$ is a universal constant.
\item The proportion of elements in $SO^{\pm}(2n,q)$  which are both strongly semisimple
regular and contained in a conjugate of $SO^{\pm}(2n/b,q^b).b$ for some
prime $b|n$ is at most $\frac{A}{n^{1/2}}$ where $A$ is a universal constant.
\end{enumerate}
\end{thm}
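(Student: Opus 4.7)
The plan is to mimic the argument already carried out for $Sp(2n,q)$ in Theorem \ref{thm:sp-ss}, with strongly regular semisimple elements playing the role of regular semisimple elements. As in the earlier arguments, (2) immediately implies (1), so I would aim directly at (2), splitting the set of ``bad'' elements into those contained in a conjugate of the outer coset $SO^{\pm}(2n/b,q^b).b \setminus SO^{\pm}(2n/b,q^b)$ and those contained in a conjugate of $SO^{\pm}(2n/b,q^b)$ itself.

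For the outer coset contribution, Shintani descent (as invoked in Theorems \ref{Slconjout} and \ref{Uconjout} and in the $Sp$ case) shows that for each $0 < i < b$ the number of $SO^{\pm}(2n/b,q^b).b$-classes in $SO^{\pm}(2n/b,q^b)\cdot x^i$ is bounded by $k(SO^{\pm}(2n/b,q))$, which by Table~1 is at most roughly $7.5\, q^{n/b}$ (or $14\,q^{n/b}$ in characteristic two). Combining with the centralizer lower bound of Theorem \ref{smallcent} and summing over the at most $\log_2 n$ prime divisors $b$ of $n$ gives a bound of the form $B(1+\log_q n)(\log_2 n)/q^{n-1}$, which is much smaller than $A/n^{1/2}$.

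For the inner contribution, the Weyl group of $SO^{\pm}(2n,q)$ is of type $D_n$, with conjugacy classes of maximal tori $T_w$ parametrized by signed cycle types (Lemma \ref{lem:type B Weyl}). If $g$ is a strongly regular semisimple element of $SO^{\pm}(2n,q)$ that lies in a conjugate of $SO^{\pm}(2n/b,q^b)$, then its centralizer is a maximal torus of the subgroup, the characteristic polynomial has distinct roots (this is precisely where ``strongly'' is needed, so that $\pm 1$ eigenvalues do not create any anomalous behavior and the subgroup actually has to contain $C_G(g)$), and the Galois argument of Lemma \ref{option} forces every irreducible factor of the characteristic polynomial over $\F_q$ to have degree divisible by $b$. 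Translated to the Weyl group, this says $w \in W(D_n)$ has every (unsigned) cycle length divisible by $b$. Counting such signed permutations inside $W(D_n) \subset W(B_n)$, Corollary \ref{allcycle} applied to the underlying unsigned cycle type (together with Lemma \ref{lem:type B Weyl}'s cycle-index technology to absorb the choice of signs into a constant factor) bounds the proportion of such $w$ by $A/n^{1/2}$. The passage from the Weyl group proportion to a proportion of strongly regular semisimple elements in $G$ is the same as in \cite[\S5]{FG1} used for $Sp$, since the number of strongly regular semisimple elements in $T_w$ is $|T_w|$ up to an error controlled by the number of roots of certain polynomials. Summing over primes $b\mid n$ costs another factor of $\log_2 n$, still giving the claimed $A/n^{1/2}$ bound.

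The main obstacle is the step linking the extension field structure to cycle lengths in the Weyl group of type $D$: one must verify that the Galois-theoretic argument used for $GL$, $U$, and $Sp$ carries over cleanly despite the constraint of lying in $W(D_n)$ rather than the full signed group $W(B_n)$, and that the exceptional role of eigenvalues $\pm 1$ (which can switch the $\pm$-type, and for which the $b=2$ case restricts the ambient group to $+$ type as the statement anticipates) is neutralized by working with strongly regular semisimple elements. Once this technical point is handled, the counting mirrors Theorem \ref{thm:sp-ss} step for step.
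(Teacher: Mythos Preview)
Your proposal is correct and follows essentially the same route as the paper, which simply states that the proof is identical to that of Theorem~\ref{thm:sp-ss} with strongly regular semisimple elements replacing regular semisimple ones, and notes the type restrictions when $b=2$. You have correctly identified both the reason for passing to strongly regular elements (to avoid anomalies from $\pm 1$ eigenvalues) and the type constraint; one cosmetic point is that Lemma~\ref{lem:type B Weyl} concerns a different Weyl-group condition, so the relevant input for the inner part is really just Corollary~\ref{allcycle} applied to the underlying permutation (the sign choices being uniform and hence costing at most a factor of~$2$ when passing from $W(B_n)$ to $W(D_n)$).
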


\subsection{Some special cases} \label{special}

In this subsection, we treat some special cases of extension field groups.

To begin we treat the case of $U(n,q).2$ contained in $Sp(2n,q)$
(recall that $U(n,q)$ is contained in $GL(n,q^2)$ and imbeds in $Sp(2n,q)$
via the embedding of $GL(n,q^2)$ in $GL(2n,q)$).

Lemma \ref{realU} upper bounds the number of real conjugacy classes of $U(n,q)$.

\begin{lemma} \label{realU} The number of real conjugacy classes of
$U(n,q)$ is equal to the number of real conjugacy classes of $GL(n,q)$
and is at most $28 q^{\lfloor n/2 \rfloor}$.
\end{lemma}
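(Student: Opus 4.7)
The plan is to prove that the number of real conjugacy classes of $U(n,q)$ equals that of $GL(n,q)$, and then invoke Lemma \ref{realGL}(3) to obtain the bound $28 q^{\lfloor n/2 \rfloor}$.

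For the equality of counts I would use Wall's combinatorial parameterization of conjugacy classes in both groups. Conjugacy classes of $GL(n,q)$ are indexed by functions $\phi$ from monic irreducibles $f \ne x$ in $\mathbb{F}_q[x]$ to partitions, subject to $\sum_f \deg(f) |\phi(f)| = n$; the map $g \mapsto g^{-1}$ sends $\phi$ to $\phi \circ *$, where $f^*(x) := x^{\deg f} f(1/x)/f(0)$, so real classes correspond exactly to the $*$-symmetric $\phi$. Wall's parameterization of conjugacy classes of $U(n,q)$ is analogous: functions $\psi$ on the orbits of the involution $f \mapsto \tilde f := \overline{f^*}$ (bar denoting Galois conjugation over $\mathbb{F}_q$) acting on monic irreducibles $f \ne x$ in $\mathbb{F}_{q^2}[x]$, with orbit degree contribution $\deg(f)$ if $\tilde f = f$ and $2\deg(f)$ otherwise; inversion again acts by $*$, so real classes are the $*$-symmetric $\psi$.

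Next I would construct a bijection between $*$-symmetric data on the two sides, which comes from the standard factorization of $\mathbb{F}_q$-irreducibles over $\mathbb{F}_{q^2}$: an $\mathbb{F}_q$-irreducible $f$ of degree $d$ stays irreducible of degree $d$ over $\mathbb{F}_{q^2}$ when $d$ is odd, and splits as $g\bar g$ with $g$ an $\mathbb{F}_{q^2}$-irreducible of degree $d/2$ when $d$ is even. A case analysis on the parity of $d$ and on whether $f = f^*$ -- with a sub-case, in the even-$d$ self-reciprocal situation, according to whether $g^* = g$ or $g^* = \bar g$ -- matches $*$-orbits of $\mathbb{F}_q$-irreducibles with the orbits of $\mathbb{F}_{q^2}$-irreducibles under the combined action of $*$ and Galois bar (equivalently, $*$-orbits of the $f \mapsto \tilde f$ orbits). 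In every case the per-cell degree contribution agrees on the two sides, so assigning the same partition to matched orbits identifies $*$-symmetric $\phi$ with $*$-symmetric $\psi$ while preserving the total degree $n$.

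Applying Lemma \ref{realGL}(3) to this common value then gives the bound $28 q^{\lfloor n/2 \rfloor}$, completing the proof. The principal obstacle is executing the case analysis cleanly: there are four main cases (with a sub-case in the even-$d$ self-reciprocal one), and in each one must verify that the orbit structures and degree contributions line up on the two sides; the cleanest way is to check that every orbit type on the $GL$ side contributes a factor $\sum_\lambda u^{d|\lambda|}$ of the appropriate weight $d$ to the generating function of real classes, and that exactly the same factor appears with the same weight on the $U$ side.
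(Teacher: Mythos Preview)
Your approach is correct but quite different from the paper's. You work with Wall's explicit parameterization of conjugacy classes in $GL(n,q)$ and $U(n,q)$ by partition-valued functions on irreducible polynomials, identify the real classes on each side as the $*$-symmetric data, and then build a degree-preserving bijection by analyzing how $\F_q$-irreducibles factor over $\F_{q^2}$ and how the involutions $*$ and Galois conjugation interact. The case analysis you outline (odd degree; even degree with $f \ne f^*$; even degree with $f = f^*$ split into the sub-cases $g^* = g$ and $g^* = \bar g$) does go through, and in each case the degree contributions match, so the argument is sound. The paper instead gives a short algebraic-group argument: since centralizers in $GL_n(\overline{\F_q})$ are connected, Lang's theorem says that an algebraic-group class meets each of $GL(n,q)$ and $U(n,q)$ in a single finite-group class, and one then observes that a $U(n,q)$-class $C$ is real if and only if its ambient class $\bar C$ is $F_q$-stable (because the Steinberg endomorphism for $U(n,q)$ differs from $F_q$ exactly by inversion on classes), giving the bijection $C \mapsto \bar C \cap GL(n,q)$ directly. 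Your route is more elementary and self-contained---it needs only the polynomial parameterizations and avoids Lang's theorem---at the cost of the case analysis; the paper's route is shorter and more conceptual but presupposes the connected-centralizer/Lang machinery.
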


\begin{proof}   Let $C$ be a conjugacy class of $U(n,q)$.  Let $\bar{C}$ denote the corresponding conjugacy class in
the algebraic group $GL(n,\bar{\F_q})$. Note that since all centralizers in $GL(n,\bar{\F_q})$ are connected,
${\bar C} \cap U(n,q)$ and ${\bar C} \cap GL(n,q)$ are single  conjugacy classes in the corresponding finite group  (by Lang's theorem).

Note that if $C=C^{-1}$,  then $C$ is invariant
under the $q$-Frobenius map and so has a representative in $GL(n,q)$ and conversely.  Thus, the map
$C \rightarrow \bar{C} \cap GL(n,q)$ gives a bijection between real classes of $GL(n,q)$ and $U(n,q)$.
The result now follows by Lemma \ref{realGL}.
 \end{proof}

 \begin{thm} \label{them:U in Sp regss}  The proportion of elements of $Sp(2n,q)$ that are regular semisimple
 and conjugate to an element of $U(n,q).2$
is at most $\frac{A}{n^{1/2}}$ for a universal constant $A$.
\end{thm}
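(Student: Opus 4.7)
The plan is to decompose $U(n,q).2$ into the cosets $U(n,q)$ and $U(n,q)\tau$ and bound the contribution of each, parallel to the extension field arguments for Theorems \ref{thm:sp-ss} and \ref{thm:so-ss}.

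For the outer coset, I would invoke the same Shintani-descent / real-class argument used in Lemma \ref{realU} and in the immediately preceding theorem on $GL(n,q).2 \subset Sp(2n,q)$: the number of $U(n,q).2$-classes lying in $U(n,q)\tau$, and hence the number of $Sp(2n,q)$-classes meeting this coset, is at most $28 q^{\lfloor n/2\rfloor}$. Multiplying by the maximum class size in $Sp(2n,q)$ afforded by Theorem \ref{smallcent}, the proportion of elements of $Sp(2n,q)$ conjugate into the outer coset is at most $O((1+\log_q n)/q^{n/2})$, which is well below $A/n^{1/2}$ uniformly in $q$.

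The inner coset is the main case. A regular semisimple element $g\in Sp(2n,q)$ lies in a unique maximal torus, and if $g\in U(n,q)$ this torus coincides with a maximal torus of $U(n,q)$. Maximal tori of $U(n,q)$ are indexed by partitions $\lambda$ of $n$, the torus of type $\lambda$ having order $\prod_i(q^{\lambda_i}-(-1)^{\lambda_i})$; maximal tori of $Sp(2n,q)$ are indexed by conjugacy classes in the Weyl group $W(B_n)$ via pairs $(\mu,\nu)$ of positive/negative cycle types, with order $\prod_i(q^{\mu_i}-1)\prod_j(q^{\nu_j}+1)$. A factor-by-factor comparison shows that even unitary cycles become positive symplectic cycles while odd unitary cycles become negative symplectic cycles; thus the $Sp(2n,q)$-tori $T_w$ arising from $U(n,q)$ are exactly those in which every even cycle of $w$ is positive and every odd cycle of $w$ is negative. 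By Lemma \ref{lem:type B Weyl}, such $w$ form a proportion at most $1/\sqrt{\pi n}$ of $W(B_n)$. Converting this Weyl group bound to a bound on the proportion of regular semisimple elements of $Sp(2n,q)$ of the listed torus types, via the cycle index machinery of \cite[\S5]{FG1} already used in Theorems \ref{thm:sp-ss} and \ref{thm:so-ss}, yields the desired $A/n^{1/2}$ estimate.

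The main technical point to verify is the cycle-parity correspondence identifying exactly which $W(B_n)$-tori come from unitary tori under the embedding $U(n,q) \hookrightarrow Sp(2n,q)$. Once that matching is in hand, the two coset estimates combine to give the theorem in direct analogy with the previous extension field proofs in this section.
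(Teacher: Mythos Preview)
Your proposal is correct and follows essentially the same approach as the paper: split into the outer coset, bounded via Lemma \ref{realU} and Theorem \ref{smallcent}, and the inner coset, handled by identifying the unitary maximal tori inside $Sp(2n,q)$ with the $T_w$ for which even cycles of $w$ are positive and odd cycles negative, then applying Lemma \ref{lem:type B Weyl} and \cite[\S5]{FG1}. Your torus-order comparison is exactly the verification the paper leaves implicit with the phrase ``by considering the embedding of the maximal torus.''
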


\begin{proof}  First consider classes of $Sp(2n,q)$ in the nontrivial coset of $U(n,q)$.
The number of $U(n,q)$ orbits on this coset is precisely the number of conjugacy classes
of $U(n,q)$ which are left invariant by the outer automorphism.   It is a straightforward exercise
to see that all such classes are real in $U(n,q)$.
Thus,  by Lemma \ref{realU},
the number of them is at most $28 q^{\lfloor n/2 \rfloor}$. Using Theorem \ref{smallcent} to upper bound the size of
a conjugacy classes of $Sp(2n,q)$, it follows that the proportion of elements of $Sp(2n,q)$ conjugate to an
element in the non-trivial coset of $U(n,q)$ is at most
\[ \frac{q^{n/2} C(1+\log_q(n))}{q^n}, \] for a universal constant $C$, whence the result holds for such elements.

If $g \in Sp(2n,q)$ is a regular semsimple element conjugate to an element of $U(n,q)$,  then $g$ is certainly
regular semisimple in $U(n,q)$ and so is contained in some maximal torus $T$ of $U(n,q)$.   Since
$U(n,q)$ and $Sp(2n,q)$ are both rank $n$ groups, $T$ is also a maximal torus of $Sp(2n,q)$.  By considering
the embedding of the maximal torus, we see that $T$ is conjugate to a maximal torus $T_w$ where $w$ is the
Weyl group (of type B) and all odd cycles have $-$ type and all even cycles have $+$ type.   By Lemma \ref{lem:type B Weyl},
and \cite[\S5]{FG1},  it follows that the proportion of elements
which are both regular semisimple and conjugate to an element of
$U(n,q)$ is at most $C'/n^{1/2}$ for a universal constant $C'$.  The result follows.
\end{proof}

The identical proof works for $SO$ (noting that the Weyl group of type D is a subgroup of index 2 in the
Weyl group of type B) and using strongly regular semisimple elements rather than semisimple regular elements.

 \begin{thm} \label{thm:U in O regss}  The proportion of elements of $SO^{\pm}(2n,q)$ that are strongly regular semisimple
 and conjugate to an element of $U(n,q).2$
is at most $\frac{A}{n^{1/2}}$ for a universal constant $A$.
\end{thm}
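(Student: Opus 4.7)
The plan is to mirror, line by line, the proof of Theorem \ref{them:U in Sp regss}, making two substitutions: replace $Sp(2n,q)$ by $SO^{\pm}(2n,q)$, and replace ``regular semisimple'' by ``strongly regular semisimple.'' As indicated by the paragraph just preceding the statement, the only structural change comes from the fact that the Weyl group of $SO^{\pm}(2n,q)$ is the index-$2$ subgroup $W(D_n) \le W(B_n)$, which affects the counts only up to a harmless factor of $2$.

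First I would split the elements in $U(n,q).2$ into the two cosets of $U(n,q)$ and handle the nontrivial coset by a direct class-count bound. By Shintani descent, the number of $U(n,q)$-orbits on the nontrivial coset equals the number of conjugacy classes of $U(n,q)$ left invariant by the graph/outer automorphism (the one induced by the embedding into $Sp(2n,q)$ or $SO^{\pm}(2n,q)$). As in the symplectic case, these invariant classes are all real, so by Lemma \ref{realU} there are at most $28 q^{\lfloor n/2 \rfloor}$ of them. Combining this class-count with the centralizer bound in Theorem \ref{smallcent} applied to $SO^{\pm}(2n,q)$ (which has rank $n$, so the smallest centralizer is of order at least $q^n/(A(1+\log_q n))$) gives an upper bound of the form
\[
\frac{C\, q^{n/2}(1+\log_q n)}{q^{n}}
\]
for the proportion of elements of $SO^{\pm}(2n,q)$ conjugate to something in the nontrivial coset. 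This is far smaller than $A/n^{1/2}$, and does not require the strongly-regular-semisimple hypothesis.

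Second, I would treat the strongly regular semisimple elements lying in conjugates of $U(n,q)$ itself. Such an element $g$ lies in a unique maximal torus $T$ of its centralizer; because $U(n,q)$ and $SO^{\pm}(2n,q)$ both have rank $n$, $T$ is also a maximal torus of $SO^{\pm}(2n,q)$. I would then identify the $W(D_n)$-conjugacy type of $T$: writing out the explicit embedding $U(n,q) \hookrightarrow GL(n,q^2) \hookrightarrow GL(2n,q)$ restricted to a preserved quadratic form shows, exactly as in the symplectic case, that such a torus corresponds to a signed permutation in which every odd cycle is of $-$ type and every even cycle is of $+$ type. Lemma \ref{lem:type B Weyl} bounds the proportion of such $w$ in $W(B_n)$ by $1/\sqrt{\pi n}$; since $W(D_n)$ has index $2$ in $W(B_n)$, the same proportion in $W(D_n)$ is at most $2/\sqrt{\pi n}$. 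Applying the standard translation from Weyl group proportions to proportions of strongly regular semisimple elements (as carried out in \cite[\S5]{FG1}) then yields the bound $C'/n^{1/2}$ for this class of elements.

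Adding the two contributions proves the theorem. The only genuine step to verify, and the one I would consider the main obstacle, is the identification of the Weyl-group type of the torus coming from $U(n,q) \subset SO^{\pm}(2n,q)$: one must check that an element of $U(n,q)$ whose characteristic polynomial (over $\F_{q^2}$) has irreducible factors of degrees $d_1,\dots, d_r$ yields a torus in $SO^{\pm}(2n,q)$ of type $w \in W(D_n) \subset W(B_n)$ whose cycle-and-sign pattern is exactly the one used in Lemma \ref{lem:type B Weyl} (odd cycles negative, even cycles positive). This is the same linear-algebra computation that underlies Theorem \ref{them:U in Sp regss}; once it is in hand, the orthogonal case reduces purely formally to the symplectic one, with the index-$2$ passage from $W(B_n)$ to $W(D_n)$ absorbed into the universal constant.
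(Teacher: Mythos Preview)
Your proposal is correct and follows exactly the approach the paper takes: the paper's own proof is simply the one-line remark that the argument for $Sp(2n,q)$ goes through verbatim for $SO^{\pm}(2n,q)$, replacing regular semisimple by strongly regular semisimple and noting that $W(D_n)$ has index $2$ in $W(B_n)$. The only minor imprecision is that the count of $U(n,q)$-orbits on the nontrivial coset is attributed in the paper to the general fact from \cite{FG2} that $k(Ng)$ equals the number of $g$-stable classes of $N$, rather than to Shintani descent, but this does not affect the argument.
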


\section{Proofs of the theorems} \label{sec:proofs}

Theorems \ref{thm:wreath} and \ref{thm:large}  follow immediately by the results of
Sections \ref{wreathprod} and \ref{extensionfield}. Theorem \ref{thm:PG} also uses the appendix.

Note that the results of Sections \ref{wreathprod} and \ref{extensionfield} show that for the maximal imprimitive groups and
the extension field groups, the proportion of elements which are both regular semisimple (or strongly regular semisimple
for the orthogonal groups) and are not derangements (for at least one of the actions) goes to $0$ with $n$.  Since
the proportion of regular semisimple elements (or strongly regular semisimple elements) is greater than $.016$
\cite{FNP}, the proportion
of regular semisimple elements which are derangements in all such actions is at least $.016$ (for $n$ sufficiently large).
Combining this result with the main results of \cite{FG1, FG2, FG4} yields Theorem \ref{thm:regss}.
Theorem \ref{thm:regss} and the results of \cite{LP} on symmetric groups  yield Theorem \ref{thm:bs}.

\section*{Appendix}

The purpose of this appendix is to show that the proportion of elements of any coset $g SL(n,q)$ contained in a conjugate
of $GL(n/b,q^b).b$ for some prime $b$ is at most $A \cdot \log_2(n)/n^{1/4}$ for a universal constant $A$.
This strengthens Theorem \ref{regssGL} (which only considered regular semisimple elements). However the proof
technique does not easily extend to the other classical groups.

Let $N(q;d)$ denote the number of monic irreducible degree $d$ polynomials
over $\mathbb{F}_q$ with non-zero constant term.

\begin{lemma} \label{set1incycle}
\[ \prod_{d \geq 1} \prod_{i
\geq 1} \left(1-\frac{u^d}{q^{id}} \right)^{-N(q;d)} = (1-u)^{-1} .\]
\end{lemma}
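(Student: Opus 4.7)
The plan is to verify the identity by taking logarithms of both sides and comparing coefficients of $u^n$. Writing $L(u)$ for the left hand side, we have
\[
\log L(u) = -\sum_{d \ge 1} N(q;d) \sum_{i \ge 1} \log\!\left(1 - \frac{u^d}{q^{id}}\right) = \sum_{d,i,k \ge 1} N(q;d) \frac{u^{dk}}{k\, q^{idk}}.
\]
First I would extract the coefficient of $u^n$ in this expansion by restricting to pairs $(d,k)$ with $dk=n$, and then summing the inner geometric series in $i$.

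Next, performing the sum over $i$ gives $\sum_{i \ge 1} q^{-in} = 1/(q^n - 1)$, which factors out of the sum over divisors since it depends only on $n$. Reindexing by $k = n/d$ yields
\[
[u^n]\log L(u) = \frac{1}{q^n-1}\sum_{d \mid n} \frac{d}{n}\, N(q;d) = \frac{1}{n(q^n-1)} \sum_{d \mid n} d \cdot N(q;d).
\]
The computation now reduces to a single counting identity.

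The one genuinely substantive step — which I would flag as the crux — is the identity $\sum_{d \mid n} d \cdot N(q;d) = q^n - 1$. This counts the $q^n - 1$ nonzero elements of $\mathbb{F}_{q^n}$ by grouping them according to their minimal polynomial over $\mathbb{F}_q$: each nonzero $\alpha \in \mathbb{F}_{q^n}$ lies in some $\mathbb{F}_{q^d}$ with $d \mid n$, and contributes to a Galois orbit of size $d$ giving rise to a monic irreducible of degree $d$ whose constant term is $\pm N_{\mathbb{F}_{q^d}/\mathbb{F}_q}(\alpha) \neq 0$, and conversely every monic irreducible $f$ of degree $d \mid n$ with $f(0) \neq 0$ contributes $d$ nonzero roots in $\mathbb{F}_{q^n}$.

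Substituting this identity yields $[u^n]\log L(u) = 1/n$, which matches the expansion $\log(1-u)^{-1} = \sum_{n \ge 1} u^n/n$. Since both sides of the claimed equality have constant term $1$, equality of logarithms implies equality of the original products, completing the proof. No obstacle of substance arises beyond the standard counting identity above; the remaining manipulations are formal rearrangements of absolutely convergent double series (for $|u|$ small), which is all that is needed since the claim is an identity of formal power series in $u$ with coefficients in $\mathbb{Q}(q)$.
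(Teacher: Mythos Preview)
Your proof is correct. However, it proceeds by a different route than the paper's argument. The paper swaps the order of the two infinite products and invokes the known generating function identity
\[
\prod_{d \ge 1} (1-v^d)^{-N(q;d)} = \frac{1-v}{1-qv},
\]
applied with $v = u/q^i$ for each $i \ge 1$; the resulting product $\prod_{i \ge 1} \frac{1-u/q^i}{1-u/q^{i-1}}$ telescopes to $(1-u)^{-1}$. Your approach instead takes logarithms and reduces everything to the elementary counting identity $\sum_{d \mid n} d\, N(q;d) = q^n - 1$. These two ingredients are essentially equivalent (the counting identity is exactly what one obtains by taking logarithms of the generating function identity), so the difference is more one of presentation than of substance. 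The paper's version is slightly slicker because the telescoping collapses the sum over $i$ at the level of products rather than after expanding, while yours has the advantage of being entirely self-contained, not needing to cite the generating function identity from elsewhere.
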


\begin{proof} By switching the order of the infinite products, the lemma
follows from the well-known equation (see for instance \cite{F})
\[ \prod_{d \geq 1} (1-u^d)^{-N(q;d)} = \frac{1-u}{1-qu}.\]
\end{proof}

	Lemma \ref{compareN} will be helpful in upper bounding the proportion of elements of $GL(n,q)$ conjugate to an element of $GL(n/b,q^b)$.

\begin{lemma} \label{compareN} $N(q;db) \leq \frac{1}{b} N(q^b;d)$.
\end{lemma}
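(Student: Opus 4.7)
The plan is to prove the inequality via a Galois-theoretic factorization argument: every monic irreducible polynomial $f \in \mathbb{F}_q[x]$ of degree $db$ with non-zero constant term factors over $\mathbb{F}_{q^b}$ into exactly $b$ distinct monic irreducibles of degree $d$, each also with non-zero constant term, and distinct $f$'s yield disjoint sets of factors.

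In more detail, I would first recall that a monic irreducible polynomial $f \in \mathbb{F}_q[x]$ of degree $db$ corresponds to a single Frobenius-$q$ orbit of size $db$ inside $\overline{\mathbb{F}_q}$, consisting of elements $\alpha \in \mathbb{F}_{q^{db}}$ with $\mathbb{F}_q(\alpha) = \mathbb{F}_{q^{db}}$. The key computation is that for such an $\alpha$, $\mathbb{F}_{q^b}(\alpha)$ is a subfield of $\mathbb{F}_{q^{db}}$ containing both $\mathbb{F}_{q^b}$ and $\mathbb{F}_q(\alpha) = \mathbb{F}_{q^{db}}$, hence equals $\mathbb{F}_{q^{db}}$, so $[\mathbb{F}_{q^b}(\alpha):\mathbb{F}_{q^b}] = d$. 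Thus each Frobenius-$q$ orbit of size $db$ splits into Frobenius-$q^b$ orbits of size exactly $d$, and there are $b$ of them.

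Next I would package this as an injection from ordered pairs $(f, g)$, where $f$ is a monic irreducible of degree $db$ over $\mathbb{F}_q$ with $f(0) \ne 0$ and $g$ is one of its $b$ irreducible factors over $\mathbb{F}_{q^b}$, into the set of monic irreducibles of degree $d$ over $\mathbb{F}_{q^b}$ with non-zero constant term. The constant term condition transfers because any root of such a $g$ is a root of $f$ and so is non-zero. Injectivity is immediate: the roots of $g$ determine the full Frobenius-$q$ orbit (by applying the Frobenius-$q$ powers to any single root), and thus recover $f$ uniquely. Counting the domain as $b \cdot N(q; db)$ and the codomain as $N(q^b; d)$ yields $b \cdot N(q; db) \le N(q^b; d)$, which is the claimed inequality.

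I do not anticipate any real obstacle here; the only subtlety is confirming that the $b$ factors of $f$ over $\mathbb{F}_{q^b}$ are genuinely distinct (which follows because $f$ is separable, being irreducible over a perfect field) and that the non-zero constant term hypothesis is preserved in both directions. Edge cases such as $d=1$ (where the codomain is just $\mathbb{F}_{q^b}^{*}$ in disguise) are handled by the same argument since nothing in the Galois reasoning requires $d \ge 2$.
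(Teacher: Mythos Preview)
Your proposal is correct and follows essentially the same Galois-theoretic idea as the paper. The paper phrases it as a direct count of elements of $\mathbb{F}_{q^{db}}^*$: those with Frobenius-$q$ orbit of size $db$ form a subset of those with Frobenius-$q^b$ orbit of size $d$, and dividing both counts by $db$ gives exactly $N(q;db) \le \frac{1}{b} N(q^b;d)$; your factorization-and-injection version is just a repackaging of this same containment at the level of polynomials rather than roots.
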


\begin{proof} Recall the Galois theoretic interpretation of roots of an irreducible polynomial as orbits under the Frobenius map. The left hand side is $1/(db)$ multiplied by the number of elements of $\mathbb{F}_{q^{db}}^*$ which form an orbit of size $db$ under the Frobenius map $x \rightarrow x^q$. The quantity $N(q^b;d)/b$ is $1/(db)$ multiplied by the number of elements of $\mathbb{F}_{q^{db}}^*$ which form an orbit of size $d$ under the map $x \rightarrow x^{q^b}$. The lemma follows. \end{proof}

	For $f(u)=\sum_{n \geq 0} f_n u^n$, $g(u)=\sum_{n \geq 0} g_n u^n$, we let the notation $f<<g$ mean that $|f_n| \leq |g_n|$ for all $n$. In the proof of Theorem \ref{countindot}, it will also be useful to have some notation about partitions. Let
$\lambda$ be a partition of some non-negative integer $|\lambda|$ into
parts $\lambda_1 \geq \lambda_2 \geq \cdots$. Let $m_i(\lambda)$ be the
number of parts of $\lambda$ of size $i$, and let $\lambda'$ be the
partition dual to $\lambda$ in the sense that $\lambda_i' = m_i(\lambda) +
m_{i+1}(\lambda) + \cdots$. It is also useful to define the diagram
associated to $\lambda$ by placing $\lambda_i$ boxes in the $i$th row. We
use the convention that the row index $i$ increases as one goes downward.
So the diagram of the partition $(5441)$ is

\[ \begin{array}{c c c c c} \framebox{} & \framebox{} & \framebox{}
& \framebox{} &  \framebox{} \\ \framebox{} &  \framebox{}& \framebox{} &
\framebox{} & \\
\framebox{} & \framebox{} & \framebox{} & \framebox{}&    \\ \framebox{} &
& & &
\end{array}
\] and $\lambda_i'$ can be interpreted as the size of the $i$th column.
The notation $(u)_m$ will denote $(1-u)(1-u/q) \cdots (1-u/q^{m-1})$.

\begin{theorem} \label{countindot} For $b$ prime, the proportion of elements in $GL(n,q)$ conjugate to an element of $GL(n/b,q^b) $ is at most $\frac{A}{n^{1/2}}$ where $A$ is a universal constant. \end{theorem}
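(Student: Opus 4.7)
The plan is to prove a cleaner intermediate statement. Let $\bar P_b(n,q)$ denote the proportion of $g\in GL(n,q)$ whose characteristic polynomial has the following property: every irreducible factor $\phi$ with $b\nmid \deg\phi$ occurs with multiplicity divisible by $b$. By Lemma~\ref{option}, any $g$ conjugate to an element of $GL(n/b,q^b)$ satisfies this property, because for such $\phi$ all Jordan block multiplicities are divisible by $b$, and hence so is their sum, the total multiplicity of $\phi$ in the characteristic polynomial. Thus it suffices to show $\bar P_b(n,q) \le A/n^{1/2}$.

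Write $H_d(v) = \prod_{i\ge 1}(1-v/q^{id})^{-1} = \sum_\lambda v^{|\lambda|}/c_\lambda(q^d)$ for the per-polynomial cycle-index factor of an irreducible polynomial of degree $d$, and use the roots-of-unity filter
\[ J_d^{[b]}(v) := \frac{1}{b}\sum_{k=0}^{b-1} H_d(\omega^k v),\qquad \omega = e^{2\pi i/b}, \]
to select the coefficients of $H_d$ at indices divisible by $b$. Combining with Lemma~\ref{set1incycle} yields
\[ \bar G_b(u) := \sum_{n\ge 0}\bar P_b(n,q)\, u^n = \frac{1}{1-u}\prod_{d:\, b\nmid d}\Big(\frac{J_d^{[b]}(u^d)}{H_d(u^d)}\Big)^{N(q;d)}. \]
The key structural identity $\prod_{k=0}^{b-1}H_d(\omega^k v) = H_{db}(v^b)$, coming from $\prod_k(1-\omega^k x) = 1-x^b$, controls the ratio $R_d(v) := J_d^{[b]}(v)/H_d(v)$, whose power-series expansion begins $R_d(v) = 1 - v/(q^d-1) + O(v^2/q^{2d})$ (using $J_d^{[b]}(v) = 1 + O(v^b)$ and $H_d(v) = 1 + v/(q^d-1) + O(v^2)$).

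The heart of the proof is a coefficient-wise upper bound $\bar G_b(u) \le C\cdot(1-u^b)^{-1/b}$ for a universal $C$. The heuristic: taking logs and using $N(q;d) = q^d/d + O(q^{d/2})$, one obtains
\[ \log \bar G_b(u) \;\approx\; -\log(1-u) - \sum_{d:\, b\nmid d} \frac{u^d}{d}, \]
and the identity $\sum_{d:\,b\nmid d}u^d/d = -\log(1-u) + \frac{1}{b}\log(1-u^b)$ exponentiates to $\bar G_b(u)\approx(1-u^b)^{-1/b}$. Once the coefficient-wise inequality is established, Lemma~\ref{binomialbound} applied with $t=1/b$ to $(1-u^b)^{-1/b}$, viewed as $(1-w)^{-1/b}$ at $w=u^b$, gives
\[ [u^n]\,(1-u^b)^{-1/b} \;\le\; \tfrac{1}{b}e^{1/b}(n/b)^{1/b-1} \;\le\; A/n^{1/2}, \]
for every prime $b\ge 2$, since $1 - 1/b\ge 1/2$.

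The main obstacle is justifying the coefficient-wise inequality $\bar G_b(u) \le C\cdot(1-u^b)^{-1/b}$ uniformly in $q$. Three error sources must be controlled simultaneously: (a) the approximation $N(q;d) = q^d/d + O(q^{d/2}/d)$ from the M\"obius formula for irreducible polynomial counts; (b) the higher-order terms of $R_d$ beyond the linear one, which need to be bounded coefficient-wise in each $d$ and then multiplied across $d$; and (c) the transition from a formal-logarithmic estimate to a direct comparison of non-negative power series. A plausible route is to use $\prod_k H_d(\omega^k v) = H_{db}(v^b)$ to group the correction factors by their $b$-periodic structure, isolating the $(1-u^b)^{-1/b}$ factor exactly, and then bound the residual product by a power series of uniformly bounded coefficients, with Lemma~\ref{compareN} (bounding $N(q;db)$ in terms of $N(q^b;d)/b$) used to keep the exponents under control as $d$ grows.
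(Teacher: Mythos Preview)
Your proposal has a genuine gap: the crucial coefficient-wise inequality $\bar G_b(u) \le C\cdot(1-u^b)^{-1/b}$ is asserted but not proved, and you explicitly flag it as only heuristic. A log-level approximation does not by itself yield a coefficient-wise domination of non-negative power series, and the three error sources you list (the error term in $N(q;d)$, the higher-order terms of $R_d$, and the passage from log to coefficient comparison) interact in a way that your outline does not resolve. In particular, $R_d(v)=J_d^{[b]}(v)/H_d(v)$ need not have sign-controlled coefficients, so the usual ``$\ll$'' machinery does not apply directly to the product $\prod_{b\nmid d}R_d(u^d)^{N(q;d)}$; the identity $\prod_k H_d(\omega^k v)=H_{db}(v^b)$ involves $H_d$ rather than $J_d^{[b]}$, so it does not immediately repackage your correction factors.

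There is also a strategic issue. You weaken Lemma~\ref{option} to a statement only about total multiplicities in the characteristic polynomial, discarding the Jordan-block information. The paper's proof retains that finer information and uses it essentially: it splits the cycle-index generating function into a factor coming from irreducible polynomials of degree divisible by $b$ (bounded via Lemmas~\ref{compareN} and~\ref{set1incycle} by $(1-u)^{-1/b}$ after the substitution $u\mapsto u^{1/b}$), and a factor coming from partitions in which each part-multiplicity is a multiple of $b$. For the second factor, the key point is that such a partition datum corresponds to an honest conjugacy class in a smaller $GL(s,q)$, so one can invoke the class-count bound $k(GL(s,q))\le q^s$ together with the minimum-centralizer bound of Theorem~\ref{smallcent} to get a term of size roughly $q^{-(b-1)s}$. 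This is precisely what your weakened hypothesis cannot access, since knowing only that the total multiplicity is divisible by $b$ does not give such a bijection. I would recommend reinstating the Jordan-block condition and following the two-factor decomposition; the convolution at the end is then routine.
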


\begin{proof} By Lemma \ref{option}, if an element of $GL(n,q)$ is contained in a conjugate of $GL(n/b,q^b)$, every irreducible factor of its characteristic
 polynomial either has degree divisible by $b$ or has every Jordan block size occur with multiplicity a multiple of $b$.
 By the cycle index of $GL(n,q)$ (see \cite{F} or \cite{St} for background), the proportion of such
 elements is at the most the coefficient of $u^{n/b}$ in \[ \prod_{d \geq 1} \prod_{i \geq 1} (1-u^d/q^{idb})^{-N(q;db)} \prod_{d \geq 1} \left[ \sum_{\lambda \in P_b} \frac{u^{|\lambda|d/b}}{c(q^d,\lambda) } \right]^{N(q;d)} \] where \[ c(q,\lambda)= \frac{1} {q^{\sum_i (\lambda_i')^2} \prod_i (1/q)_{m_i(\lambda)}} \] and $P_b$ is the set of partitions in which each part size occurs with multiplicity a multiple of $b$.

	By Lemma \ref{compareN} and Lemma \ref{set1incycle}, \[ \prod_{d \geq 1} \prod_{i \geq
1} (1-u^d/q^{idb})^{-N(q;db)} << \prod_{d \geq 1} \prod_{i
\geq 1} (1-u^d/q^{idb})^{-\frac{1}{b} N(q^b;d)} =
(1-u)^{-1/b}.\] By Lemma \ref{binomialbound}, the coefficient of $u^{r}$ in this
expression is at most $\frac{A}{b \sqrt{r}}$ where $A$ is a universal
constant.

	Next, we claim that the coefficient of $u^s$ in \[\prod_{d
\geq 1} \left[ \sum_{\lambda \in P_b}
\frac{u^{|\lambda|d/b}}{c(q^d,\lambda)} \right]^{N(q;d)}\] is at most $q^s$
divided by the minimum centralizer size of an element of
$GL(sb,q)$. To see this, observe that after expanding out the product,
the terms correspond to conjugacy classes of $GL(sb,q)$ with the
property that every Jordan block corresponding to an irreducible polynomial
occurs with multiplicity a multiple of $b$. These correspond to classes of $GL(s,q)$
and the number of them is at most $q^s$ \cite{MR}. To complete the proof of the claim, recall
from \cite{M} that \[ \prod_{\phi} c(q^d,\lambda_{\phi}) \] is the centralizer size of an element with
conjugacy data $\{\lambda_{\phi}\}$. Thus Theorem
\ref{smallcent} implies that the coefficient of $u^s$ in
\[\prod_{d \geq 1} \left[ \sum_{\lambda \in P_b}
\frac{u^{|\lambda|d/b}}{c(q^d,\lambda)} \right]^{N(q;d)}\] is at most
\[\frac{ A (1+ \log_q(bs))}{q^{(b-1)s}} \] for a universal constant $A$.

Thus the coefficient of $u^{n/b}$ in \[ \prod_{d \geq 1} \prod_{i \geq 1} (1-u^d/q^{idb})^{-N(q;db)} \prod_{d \geq 1} \left[ \sum_{\lambda \in P_b} \frac{u^{|\lambda|d/b}}{c(q^d,\lambda) } \right]^{N(q;d)} \] is at most

\begin{eqnarray*}
& & Coef. u^{n/b} in (1-u)^{-1/b} + Coef. u^{n/b} in \prod_{d \geq 1} \left[ \sum_{\lambda \in P_b} \frac{u^{|\lambda|d/b}}{c(q^d,\lambda) } \right]^{N(q;d)} \\
& & + \sum_{r=1}^{\frac{n}{b}-1} Coef. u^{r} in (1-u)^{-1/b} \cdot Coef. u^{n/b-r} in \prod_{d \geq 1} \left[ \sum_{\lambda \in P_b} \frac{u^{|\lambda|d/b}}{c(q^d,\lambda) } \right]^{N(q;d)} \\
& \leq & \frac{A}{b \sqrt{n/b}} + \frac{A(1+\log_q(n))}{q^{n-n/b}} + \sum_{r=1}^{n/b-1} \frac{A}{b \sqrt{r}}
\frac{(1+\log_q(n-br))}{q^{(b-1)(n/b-r)}}. \end{eqnarray*} Splitting the sum into two sums (one with $1 \leq r \leq n/(2b)$ and the other with $n/(2b) \leq r \leq n/b-1$) proves the theorem.	
\end{proof}

Now we prove the main results of this appendix.

\begin{theorem} \label{GLext}
\begin{enumerate}
\item For $b$ prime, the proportion of elements of $GL(n,q)$ contained in a conjugate of $GL(n/b,q^b).b$ is at most $\frac{A}{n^{1/2}}$ for a universal constant $A$.
\item The proportion of elements of $GL(n,q)$ contained in a conjugate of $GL(n/b,q^b).b$ for some prime $b$ is at most $\frac{A \cdot \log_2(n)}{n^{1/2}}$ for a universal constant $A$.
\end{enumerate}
\end{theorem}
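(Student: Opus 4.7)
The plan is to reduce the statement to two ingredients already in hand: Theorem \ref{countindot}, which handles the ``inner'' subgroup $H_0 := GL(n/b,q^b)$, and the Shintani-descent class count of Theorem \ref{Slconjout}, which handles the outer cosets of $H_0$ in $H := GL(n/b,q^b).b$. The decomposition $H = H_0 \sqcup \bigsqcup_{i=1}^{b-1} H_0 x^i$ shows that any element of $GL(n,q)$ conjugate into $H$ is either conjugate into $H_0$ or conjugate to some element of $H \setminus H_0$.

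For part (1), the proportion of elements of $GL(n,q)$ conjugate into $H_0$ is already at most $A/n^{1/2}$ by Theorem \ref{countindot}. For the outer-coset contribution, Theorem \ref{Slconjout} says that the number of $GL(n,q)$-conjugacy classes meeting $H \setminus H_0$ is at most $2q^{n/2}$. By the centralizer bound of Theorem \ref{smallcent}, every such class has size at most $A(1+\log_q n)\,|GL(n,q)|/q^n$. Multiplying, the proportion of $GL(n,q)$ meeting a conjugate of $H \setminus H_0$ is at most $2A(1+\log_q n)/q^{n/2}$. Since $(1+\log_q n)/q^{n/2}$ is $o(1/n^{1/2})$ uniformly in $q\ge 2$ (small values of $n$ and $q$ are absorbed into the universal constant), this contribution is also bounded by $C/n^{1/2}$ for some universal $C$, and summing the two bounds gives part (1).

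For part (2), I would take a union bound over the distinct prime divisors $b$ of $n$. Each $b$ contributes at most $A/n^{1/2}$ by part (1), and $n$ has at most $\log_2 n$ distinct prime factors, so the total proportion is at most $A\,\log_2(n)/n^{1/2}$.

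There is no genuine obstacle here: the substantive work is already encapsulated in Theorem \ref{countindot} (via the cycle index and Lemma \ref{compareN}) and in Theorem \ref{Slconjout} (via Shintani descent). The only routine verification is the dominance $(1+\log_q n)/q^{n/2} \ll n^{-1/2}$ uniformly in $q \ge 2$, which is immediate since exponential decay in $n$ beats $n^{-1/2}$ for large $n$, and the finitely many small-$n$ cases are absorbed into the constant $A$.
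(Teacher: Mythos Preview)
Your proof is correct and follows essentially the same approach as the paper: split into the inner subgroup $H_0$ (handled by Theorem \ref{countindot}) and the outer cosets (handled by combining the class count of Theorem \ref{Slconjout} with the centralizer bound of Theorem \ref{smallcent}), then take a union bound over prime divisors for part (2). The only cosmetic difference is that the paper writes the outer-coset bound as $Aq^{n/2}(1+\log_q n)/q^n$ rather than $2A(1+\log_q n)/q^{n/2}$, which is the same thing.
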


\begin{proof} The second part of the theorem follows from the first part together with the fact that an integer $n$ has at most $\log_2(n)$ prime divisors; hence we prove part one.

By Theorem \ref{countindot}, the proportion of elements of $GL(n,q)$ conjugate to an element of $GL(n/b,q^b)$ is at most $A/n^{1/2}$ for a universal constant $A$. Now the number of elements of $GL(n,q)$ conjugate to an element of the group $GL(n/b,q^b).b$ but not to anything in $GL(n/b,q^b)$ is at most the number of conjugacy classes of $GL(n/b,q^b).b$ outside of $GL(n/b,q^b)$ multiplied by the maximum size of a $GL(n,q)$ class. These two quantities are bounded in
Theorems \ref{Slconjout} and \ref{smallcent} respectively. One concludes that the proportion of elements of $GL(n,q)$ conjugate to an element of $GL(n/b,q^b).b$ outside of $GL(n/b,q^b)$ is at most
\[ \frac{A q^{n/2} (1+\log_q(n))}{q^n} \leq A/n^{1/2} .\]
\end{proof}

Let us consider the same problem for $SL(n,q)$ or more generally for a fixed coset of $SL(n,q)$.
Since $[GL(n,q):SL(n,q)] = q-1$, the previous result implies that the proportion of elements in a given coset of
$SL(n,q)$ conjugate to an element of $GL(n/b,q^b).b$ is at most $(q-1)A/n^{1/2}$ for a universal constant $A$.
So if $q < n^{1/4}$, we see that the proportion of elements of $gSL(n,q)$ in a conjugate of $GL(n/b,q^b).b$ is at most
$A/n^{1/4}$.

Suppose that $q \ge n^{1/4}$.    Then the proportion of elements in $gSL(n,q)$ which are not regular
semisimple is at most $C/q  \le C/n^{1/4}$ for a universal constant $C$.
Arguing as above,  we
see that every regular semisimple element in $GL(n,q)$ contained in $GL(n/b,q^b)$
has all irreducible factors of its characteristic polynomial of degree a multiple of $b$.    Moreover, we see that the centralizer of such
an element (a maximal torus) in $GL(n,q)$ is contained in $GL(n/b,q^b)$.    So a maximal torus $T_w$ is conjugate to a subgroup of
$GL(n/b,q^b)$ if and only if all cycles of $w$ have length divisible by $b$.  By Lemma \ref{lem:bcycle},  the proportion of elements in $S_n$
with this property is at most
$A/n^{1-1/b}$ for some universal constant $A$. By \cite[\S5]{FG1},  this implies that the proportion of elements which are regular
semisimple and contained in a conjugate of $GL(n/b,q^b)$ in any fixed coset of $SL(n,q)$  is at most $A/n^{1-1/b}$.
Arguing as in the previous theorem shows that the proportion of elements conjugate to an element of $GL(n/b,q^b).b$ outside
of $GL(n/b,q^b)$ is at most $A/n^{1/2}$.
Summarizing, if $q \ge n^{1/4}$, we have that the proportion of elements
of $gSL(n,q)$ which are contained in some conjugate of $GL(n/b,q^b).b$ is at most $D/n^{1/4}$.   So we have proved the following:

\begin{theorem} \label{SLext}
\begin{enumerate}
\item For $b$ prime, the proportion of elements of any coset $gSL(n,q)$ contained in a conjugate of
$GL(n/b,q^b).b$ is at most $\frac{A}{n^{1/4}}$ for a universal constant $A$.
\item The proportion of elements of any coset $gSL(n,q)$ contained in a conjugate of $GL(n/b,q^b).b$ for some prime $b$
is at most $\frac{A \cdot \log_2(n)}{n^{1/4}}$ for a universal constant $A$.
\end{enumerate}
\end{theorem}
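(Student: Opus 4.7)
\smallskip

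\noindent\textbf{Proof plan for Theorem \ref{SLext}.}
The plan is to split into two regimes depending on the relative size of $q$ and $n$, and in each regime bundle together an elementary ``coset counting'' step with inputs that are already available in the paper. Part (2) will follow from part (1) simply by summing over primes $b \mid n$ and using the fact that $n$ has at most $\log_2 n$ distinct prime divisors; so the real work is in part (1).

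\smallskip

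\noindent\emph{Regime 1: $q < n^{1/4}$.}
Here I will just push Theorem \ref{GLext}(1) through the index $[GL(n,q):SL(n,q)] = q-1$. If $P_b(G)$ denotes the proportion of $g\in G$ conjugate into $GL(n/b,q^b).b$, then for any coset $gSL(n,q)$ one has
\[
\frac{|gSL(n,q) \cap X|}{|SL(n,q)|} \;\le\; (q-1)\cdot\frac{|X|}{|GL(n,q)|},
\]
where $X$ is the union of all $GL(n,q)$-conjugates of $GL(n/b,q^b).b$. By Theorem \ref{GLext}(1) the right side is at most $(q-1)A/n^{1/2} \le A/n^{1/4}$ in this regime, which is what we want.

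\smallskip

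\noindent\emph{Regime 2: $q \ge n^{1/4}$.}
Here I split the coset into regular semisimple elements and the rest. The non-regular-semisimple part of a coset has proportion $O(1/q) = O(1/n^{1/4})$ by the standard estimate (this is the bound used throughout the series and ultimately traces back to \cite{FNP,GL}). For regular semisimple elements, I handle the two pieces $GL(n/b,q^b)$ and the outer cosets separately:

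\smallskip

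(a) For regular semisimple elements inside a conjugate of $GL(n/b,q^b)$: by Lemma \ref{option}, such an element has characteristic polynomial with every irreducible factor of degree divisible by $b$, and its centralizer (a maximal torus) is already contained in the $GL(n/b,q^b)$. So the torus type $T_w$ is indexed by $w\in S_n$ all of whose cycles have length divisible by $b$. By Lemma \ref{lem:bcycle} the proportion of such $w$ in $S_n$ is at most $1.2/n^{1-1/b}$, and then by the standard translation principle (\cite[\S 5]{FG1}) the proportion of regular semisimple elements of any fixed coset of $SL(n,q)$ that lie in some conjugate of $GL(n/b,q^b)$ is bounded by the same quantity, which is $\le A/n^{1/2}$.

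\smallskip

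(b) For elements in a nontrivial coset of $GL(n/b,q^b)$ inside $GL(n/b,q^b).b$: Theorem \ref{Slconjout} gives at most $2q^{n/2}$ conjugacy classes of $GL(n,q)$ meeting these outer cosets, and Theorem \ref{smallcent} bounds the size of each such class by $A(1+\log_q n)|GL(n,q)|/q^n$. Multiplying and dividing by $|SL(n,q)|$ (which costs a factor $q-1$) gives a bound of order $(1+\log_q n)/q^{n/2 - 1}$, negligible compared with $1/n^{1/4}$.

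\smallskip

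Adding the three contributions in Regime 2 gives $O(1/n^{1/4})$, and combining with Regime 1 yields part (1). Part (2) is then immediate by summing over the at most $\log_2 n$ primes $b \mid n$.

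\smallskip

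\noindent\emph{Main obstacle.}
There is no single technical bottleneck: the ingredients (Lemma \ref{option}, Lemma \ref{lem:bcycle}, Theorem \ref{Slconjout}, Theorem \ref{smallcent}, Theorem \ref{GLext}, and the cycle-type translation of \cite[\S 5]{FG1}) are all in place. The only delicate point is bookkeeping the loss of the factor $q-1$ when transferring estimates between $GL(n,q)$ and a coset of $SL(n,q)$: this is exactly why the final exponent is $1/4$ rather than $1/2$, and why the case split at the threshold $q = n^{1/4}$ is forced on us. Balancing this $q-1$ loss against the ``$1/q$'' savings from regular semisimplicity is the key observation that makes the argument go through uniformly in $q$.
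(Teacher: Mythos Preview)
Your proposal is correct and matches the paper's own argument essentially step for step: the same regime split at $q=n^{1/4}$, the same use of Theorem~\ref{GLext} with the $(q-1)$ transfer in the small-$q$ regime, and the same decomposition (non-regular-semisimple via the $O(1/q)$ bound, regular semisimple in $GL(n/b,q^b)$ via Lemma~\ref{option}, Lemma~\ref{lem:bcycle} and \cite[\S5]{FG1}, outer cosets via Theorem~\ref{Slconjout} and Theorem~\ref{smallcent}) in the large-$q$ regime. Part~(2) is derived in the same way, by summing over the at most $\log_2 n$ prime divisors of $n$.
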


Almost certainly,  the $n^{1/4}$ can be replaced by $n^{1/2}$ and the $\log$ factor in (2) can be removed.


\begin{thebibliography}{AAA}

\bibitem [An]{A} Andrews, G., {\it The theory of partitions}, Addison-Wesley,
Reading, Mass., 1976.

\bibitem [As]{As} Aschbacher, M., On the maximal subgroups of the finite classical groups,
{\it Invent. Math.} {\bf 76} (1984), 469-514.

\bibitem [BDF]{BDF} Boston, N., Dabrowski, W., Foguel,
T., et al., The proportion of fixed-point-free elements in a transitive
permutation group, {\it Comm. Algebra} {\bf 21} (1993), 3259-3275.

\bibitem[CC]{CC}  Cameron, P. J. and   Cohen, A. M., On the number of fixed point
free elements in a permutation group, {\it Discrete Math.}  {\bf 106/107} (1992), 135--138.

\bibitem [DFG]{DFG} Diaconis, P., Fulman, J., and Guralnick, R.,
On fixed points of permutations, {\it J. Algebraic Combin.} {\bf 28}
(2008), 189-218.

\bibitem[EFG]{EFG}  Eberhard, S., Ford, K., and Green, B.,   Permutations fixing a $k$-set,    Arxiv 1507.04465 (2015).


\bibitem [F]{F} Fulman, J., Cycle indices for the finite classical groups, {\it J. Group Theory} {\bf 2} (1999), 251--289.

\bibitem [FG1]{FG1} Fulman, J. and Guralnick, R., Derangements in subspace actions of finite classical groups, to appear
in {\it Trans. Amer. Math. Soc.}

\bibitem [FG2]{FG2} Fulman, J. and Guralnick, R., Bounds on the number and sizes of conjugacy classes in
finite Chevalley groups with applications to derangements, {\it Trans. Amer. Math. Soc.} {\bf 364}
(2012), 3023-3070.

\bibitem [FG3]{FG3} Fulman, J. and Guralnick, R., Conjugacy class properties of the extension of
$GL(n,q)$ generated by the inverse transpose involution, {\it J. Algebra} {\bf  275}  (2004), 356-396.

\bibitem [FG4]{FG4} Fulman, J. and Guralnick, R., Derangements in simple and primitive groups,
in {\it Groups, combinatorics \& geometry (Durham, 2001)},  99-121, World Sci. Publ., River Edge, NJ, 2003.

\bibitem [FNP]{FNP} Fulman, J., Neumann, P.M. and Praeger, C.E., A
generating function approach to the enumeration of matrices in finite classical groups, {\it Mem. Amer. Math. Soc.}
{\bf 176} (2005), no. 830, vi+90 pp.

\bibitem[GL]{GL}  Guralnick, R. and L\"{u}beck, F.,
On p-singular elements in Chevalley groups in characteristic p, {\it
Groups and Computation III}, Ohio State Univ. Math. Res. Inst. Publ. 8,
(2001), 170-182.

\bibitem[GM]{GM}  Guralnick, R. and Malle, G.,
Simple groups admit Beauville structures,
{\it J. Lond. Math. Soc.}    {\bf 85} (2012),   694--721.

\bibitem[HR]{HR} Hardy, G.H. and Ramanujan, S., Asymptotic formulae in
combinatory analysis, {\it Proc. Londom Math. Soc.} {\bf 17} (1918),
75-115.

\bibitem[HWr]{HW} Hardy, G.H. and Wright, E.M., {\it An introduction to the
theory of numbers}. Fifth Edition. Clarendon Press. Oxford, 1979.

\bibitem [JK]{JK} James, G. and Kerber, A., {\it The representation theory of the symmetric group}. Encyclopedia of Mathematics and its Applications, 16. Addison-Wesley Publishing Co., Reading, Mass., 1981.

\bibitem [LP]{LP} Luczak, T., and Pyber, L., On random generation of the symmetric group, {\it Combin. Probab. and Computing} {\bf 2} (1993), 505-512.

\bibitem [M]{M} Macdonald, I., {\it Symmetric functions and Hall polynomials}, 2nd ed., Clarendon Press, Oxford, U.K.,
 1995.

\bibitem [MR]{MR} Maslen, D. K., and Rockmore, D. N., Separation of variables and the computation of
Fourier transforms on finite groups. I, {\it J. Amer. Math. Soc.} {\bf 10} (1997), 169-214.

\bibitem[Mo]{Mo}  de Montmort, P.R.: Essay d'Analyse sur les Jeux de Hazard, 1st edn. (1708), 2nd edn. (1713). Jacques Quillau, Paris.
Reprinted 2005 by AMS/Chelsea, New York.

\bibitem[NP]{NP}  Neumann, P. and Praeger, C.,  Derangements and eigenvalue-free elements in finite classical groups,
{\it  J. London Math. Soc.}   {\bf 58} (1998),  564--586.

\bibitem [Sc]{Sc} Schmutz, E., The order of a typical matrix with entries in a finite field, {\it Israel J. Math} {\bf 91} (1995), 349--371.

\bibitem [Se]{Se} Serre, J.-P., On a theorem of Jordan, {\it Bull. Amer. Math. Soc.} {\bf 40} (2003), 429--440.

\bibitem [Sh]{Sh} Shalev, A., A theorem on random matrices and some applications, {\it J. Algebra} {\bf 199} (1998), 124--141.

\bibitem [St]{St} Stong, R., Some asymptotic results on finite vector
spaces, {\it Adv. Appl Math.} {\bf 9} (1988), 167-199.

\bibitem [VW]{VW} Van Lint, J.H.  and Wilson, R.M., {\it A course in combinatorics}. Cambridge University Press, Cambridge, 1992.

\end{thebibliography}
\end{document}